\newtheorem{theorem}[equation]{Theorem}
\newtheorem{proposition}[equation]{Proposition}
\newtheorem{corollary}[equation]{Corollary}
\newtheorem{lemma}[equation]{Lemma}
\newtheorem{definition}[equation]{Definition}
\theoremstyle{remark}
\newtheorem{remark}[equation]{Remark}
\newtheorem{question}[equation]{Question}
\newtheorem{example}[equation]{Example}
\numberwithin{equation}{section}
\newcommand{\sech}{\operatorname{sech}}
\newcommand{\phg}{\operatorname{phg}}
\newcommand{\Ai}{\operatorname{Ai}}
\newcommand{\Bi}{\operatorname{Bi}}
\begin{document}
\title[Joint asymptotic expansions for Bessel functions]
{Joint asymptotic expansions for Bessel functions}
\author{David A. Sher}
\address{Department of Mathematical Sciences, DePaul University, 2320 N. Kenmore Ave, 60614, Chicago, IL, USA.}
\email{dsher@depaul.edu}

\begin{abstract}
We study the classical problem of finding asymptotics for the Bessel functions $J_{\nu}(z)$ and $Y_{\nu}(z)$ as the argument $z$ and the order $\nu$ approach infinity. We use blow-up analysis to find asymptotics for the modulus and phase of the Bessel functions; this approach produces polyhomogeneous conormal joint asymptotic expansions, valid in any regime. As a consequence, our asymptotics may be differentiated term by term with respect to either argument or order, allowing us to easily produce expansions for Bessel function derivatives. We also discuss applications to spectral theory, in particular the study of the Dirichlet eigenvalues of a disk.
\end{abstract}
\maketitle

\section{Introduction}

We consider the asymptotic analysis of the Bessel functions $J_{\nu}(z)$ and $Y_{\nu}(z)$ as the argument $z$ and/or the order $\nu$ approach infinity. This analysis is in many ways quite well understood. The extensive 1922 treatise of Watson \cite{watson} summarizes and expands on what was known at that time; see \cite{watson} for further references. Starting in the 1950s, F.~W.~J.~Olver made a systematic and exhaustive study of these asymptotics, with particular emphasis on the regimes in which $z$ and $\nu$ both approach infinity. See \cite{olver54} among others. Summaries of his results may be found in \cite{dlmf} and \cite{olver97}.

The goal of this work is to present a unified description of the asymptotics of $J_{\nu}(z)$ and $Y_{\nu}(z)$ as both the argument $z$ and the order $\nu$ approach infinity through positive real values, using the language of polyhomogeneous conormal functions on manifolds with corners. This provides a new, uniform framework in which the existing results of Olver and others may be interpreted. Our results are \emph{joint} asymptotic expansions, essentially treating the Bessel functions as functions of two variables. 

Our results are, in addition, a slight sharpening of existing results in two ways:
\begin{itemize}
\item We are able to handle \emph{any} regime in which $z$ and $\nu$ both approach infinity. There are some regimes, for example $z<<\nu$, where Olver's asymptotic expansions do not apply well, see \cite[10.41(v)]{dlmf}. In those regimes we obtain more precise information.
\item Our asymptotic expansions may be differentiated term by term, not just with respect to $z$ but with respect to the order $\nu$. Differentiation with respect to $z$ has classically been handled by using complex analysis, for example Ritt's theorem \cite[2.1(ii)]{dlmfch2}. The existing results on differentiation with respect to $\nu$ are much less clean \cite[10.15]{dlmf}.
\end{itemize}

Throughout the paper we will use the concepts of polyhomogeneous conormal functions, blow-ups, and manifolds with corners. In the analytic setting these concepts were originally introduced by Richard Melrose \cite{melrosebook} and have been used, for example, to study index theory on singular manifolds \cite{tapsit}. Expository treatments may be found in \cite{gribc,griscales,maz91}.

In order to state our main results we introduce a compactification of the first quadrant in the variables $(z,\nu)$. Specifically, we introduce the variables
\[\mu:=\nu^{-1/3};\qquad \zeta:=z^{-1/3}.\]
Then we let $Q_0$ be the rectangular compactification of the first quadrant with respect to $\mu$ and $\zeta$, so that in a neighborhood of $\{\nu=z=\infty\}$, $Q_0$ has the same smooth structure as $[0,1)_{\mu}\times[0,1)_{\zeta}$. This compactification $Q_0$ is a manifold with corners. It has four ``boundary hypersurfaces'', which in this case are line segments.

The asymptotics of Bessel functions as the arguments approach the corner $\{\mu=\zeta=0\}$ depend very closely on the specific path that is taken to approach the corner. So in order to give uniform expansions, we must perform some blow-ups. We first create a new manifold with corners, $Q_1$, by performing a radial blow-up of $\{\mu=\zeta=0\}$. This corresponds to introducing polar coordinates in $(\mu,\zeta)$, and replaces the single point at the corner with a quarter-circle. If we fix different values of the ratio $z/\nu$ and let $\nu\to\infty$, we approach different points on $Q_1$. See Figure \ref{fig:creationofq}.

Although this blow-up is helpful for resolving the singular behavior of the Bessel functions, it is not yet sufficient to distinguish between all the types of behavior we may observe when $z$ and $\nu$ both go to infinity with $z/\nu\to 1$. For example, the asymptotics of Bessel functions with $z=\nu+a\nu^{1/3}$ depend on $a$. So we need a second blow-up. Let $D\subseteq Q_1$ be the closure in $D$ of the set $\{(z,z)\}$, or equivalently the closure in $D$ of the set $\{(\mu,\mu)\, |\, \mu>0\}$. We perform a blow-up of the intersection of $D$ with the boundary of $Q_1$ to obtain a new space $Q$. This blow-up, unlike the first, is a quasi-homogeneous blow-up; it is parabolic with respect to the boundary of $Q_1$. This means that it does not distinguish between rays approaching the intersection, but rather distinguishes between parabolas approaching the intersection. See Figure \ref{fig:creationofq}. The point $D\cap\partial Q_1$ is replaced with a semi-circle. 

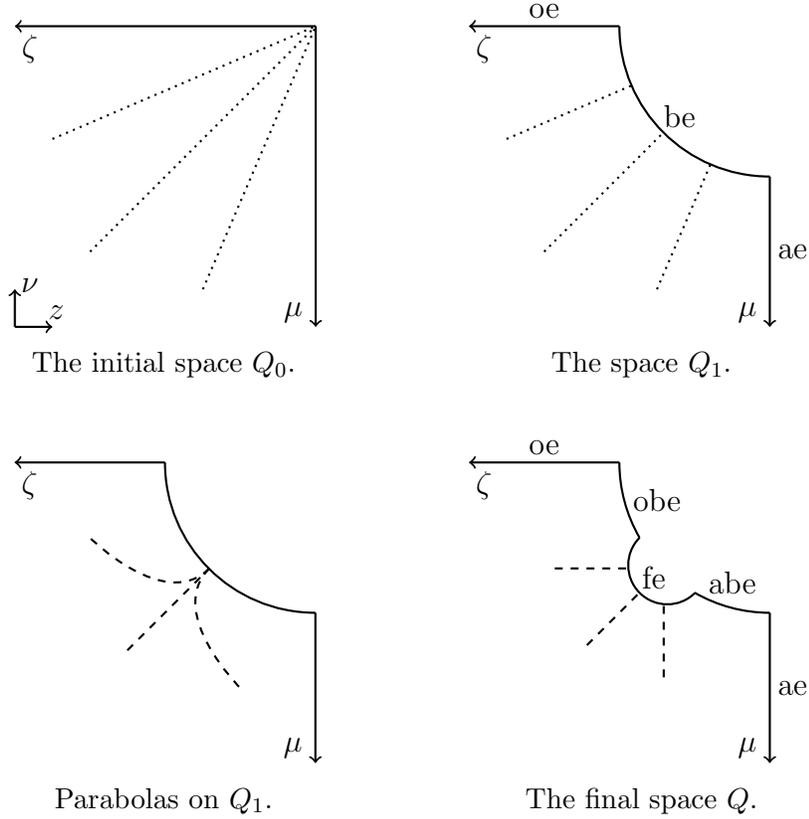
\begin{figure}
\centering
\begin{tabular}{p{6cm} p{6cm}}

\begin{subfigure}[b]{0.3\textwidth}
\centering
\begin{tikzpicture}
\draw[black,thick,->](2,2)--(-2,2);
\draw[black,thick,->](2,2)--(2,-2);
\draw[dotted,thick](0.5,-1.5)--(2,2);
\draw[dotted,thick](-1,-1)--(2,2);
\draw[dotted,thick](-1.5,0.5)--(2,2);
\node at (1.7,-1.8) {$\mu$};
\node at (-1.8,1.7) {$\zeta$};
\draw[black,thick,->](-2,-2)--(-2,-1.5);
\draw[black,thick,->](-2,-2)--(-1.5,-2);
\node at (-1.8,-1.45) {$\nu$};
\node at (-1.45,-1.8) {$z$};
\end{tikzpicture}
\captionsetup{labelformat=empty}
\caption{The initial space $Q_0$.}
\end{subfigure}

&
\begin{subfigure}[b]{0.3\textwidth}
\centering
\begin{tikzpicture}
\draw[black,thick,<-](-2,2)--(0,2);
\draw[black,thick,->](2,0)--(2,-2);
\draw[thick](0,2) arc (180:270:2);
\draw[dotted,thick](0.5,-1.5)--(1.21,0.16);
\draw[dotted,thick](-1,-1)--(0.59,0.59);
\draw[dotted,thick](-1.5,0.5)--(0.16,1.21);
\node at (-1,2.2) {oe};
\node at (.8,.8) {be};
\node at (2.3,-1) {ae};
\node at (1.7,-1.8) {$\mu$};
\node at (-1.8,1.7) {$\zeta$};
\end{tikzpicture}\captionsetup{labelformat=empty}
\caption{The space $Q_1$.}
\end{subfigure}
\\
\begin{subfigure}[b]{0.3\textwidth}
\centering
\begin{tikzpicture}
\draw[black,thick,<-](-2,2)--(0,2);
\draw[black,thick,->](2,0)--(2,-2);
\draw[thick](0,2) arc (180:270:2);
\draw[dashed,thick](-0.5,-0.5)--(0.59,0.59);
\draw[dashed,thick] (0.59,0.59) .. controls (0,0) and (1,-1) .. (1,-1);
\draw[dashed,thick] (0.59,0.59) .. controls (0,0) and (-1,1) .. (-1,1);
\node at (1.7,-1.8) {$\mu$};
\node at (-1.8,1.7) {$\zeta$};
\end{tikzpicture}\captionsetup{labelformat=empty}
\caption{Parabolas on $Q_1$.}
\end{subfigure}

&
\begin{subfigure}[b]{0.3\textwidth}
\centering
\begin{tikzpicture}
\draw[black,thick,<-](-2,2)--(0,2);
\draw[black,thick,->](2,0)--(2,-2);
\draw[thick](0,2) arc (180:210:2);
\draw[thick](2,0) arc (270:240:2);
\draw[thick](.27,1) arc (135:315:0.52);
\draw[dashed,thick](0.24,0.24)--(-0.5,-0.5);
\draw[dashed,thick](0.09,0.59)--(-0.9,0.59);
\draw[dashed,thick](0.59,0.09)--(0.59,-0.9);
\node at (-1,2.2) {oe};
\node at (.5,1.5) {obe};
\node at (.46,.47) {fe};
\node at (1.5,.4) {abe};
\node at (2.3,-1) {ae};
\node at (1.7,-1.8) {$\mu$};
\node at (-1.8,1.7) {$\zeta$};
\end{tikzpicture}\captionsetup{labelformat=empty}
\caption{The final space $Q$.}
\end{subfigure}

\end{tabular}
\caption{The creation of the space $Q$.}
\label{fig:creationofq}
\end{figure}

We denote the resulting manifold with corners by $Q$, and label its five boundary hypersurfaces oe (``order edge''), obe, fe (``front edge''), abe, and ae (``argument edge'') as in Figure \ref{fig:creationofq}. As we will see, each of these boundary hypersurfaces corresponds to a different asymptotic regime:
\begin{itemize}
\item The asymptotics \cite[10.19.1-2]{dlmf} for the Bessel functions with large order and fixed argument are asymptotics at oe;
\item Debye's expansions \cite[10.19.3]{dlmf} for fixed ratios of $z/\nu<1$ are asymptotics at obe;
\item The transition region expansions \cite[10.19.8]{dlmf} for $z=\nu+a\nu^{1/3}$, with $a$ fixed, are asymptotics at fe;
\item Debye's expansions \cite[10.19.6]{dlmf} for fixed ratios of $z/\nu>1$ are asymptotics at abe;
\item The asymptotics \cite[10.17.3-4]{dlmf} for large argument and fixed order are asymptotics at ae.
\end{itemize}
We should also mention the uniform asymptotics of Olver \cite[10.20.4-5]{dlmf}, originally published in \cite{olver54}. These are valid in a neighborhood of fe. Indeed they may be seen as asymptotic expansions on $Q_1$ rather than $Q$, valid on the interior of be. In fact, with proper care they may be extended to be valid even up to the corner be$\cap$ae, though not to be$\cap$oe \cite[10.41(v)]{dlmf}. Observe, however, that the coefficients of those expansions have singular behavior at $D\cap\partial Q_1$, corresponding to $\{z=1,\nu=\infty\}$ in the notation of \cite[10.20.4-5]{dlmf}. Our second blow-up, creating fe, resolves that singular behavior.

The Bessel functions themselves are highly oscillatory for $z>>\nu$ and therefore do not lend themselves well to polyhomogeneous conormal expansions directly. However, it is well known \cite[10.18]{dlmf} that there exist a \emph{modulus} which we denote $M_{\nu}(z)$ and a \emph{phase} which we denote $\theta_{\nu}(z)$ for which
\[J_{\nu}(z)=M_{\nu}(z)\cos\theta_{\nu}(z);\quad Y_{\nu}(z)=M_{\nu}(z)\sin\theta_{\nu}(z).\]

Our main results concern the modulus and the phase. Let $U$ denote a neighborhood of obe, fe, and abe in $Q$ whose closure does not intersect $\{\nu=0\}$ or $\{z=0\}$. Let $U_a$ denote the intersection of $U$ with any neighborhood of abe and ae whose closure does not intersect oe or obe, and we let $U_o$ denote the intersection of $U$ with any neighborhood of obe and oe whose interior contains only points $(z,\nu)$ with $z^{-1/3}-\nu^{-1/3} > 3\nu^{-1}$ (which in practice means that it avoids the lift of $\nu=z$). See Figure \ref{fig:uoua}, in which $U$ should be thought of as $U_o\cup U_a$.

\begin{theorem}\label{thm:mainalpha} The phase $\theta_{\nu}(z)$ lifts to a function which is polyhomogeneous conormal on $U$.
\end{theorem}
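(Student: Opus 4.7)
My plan is to reduce the claim to a statement about the modulus $M_{\nu}$, which the paper has presumably already shown to be polyhomogeneous conormal and non-vanishing on $U$. The key tool is the Wronskian identity $J_{\nu}(z)Y_{\nu}'(z) - J_{\nu}'(z)Y_{\nu}(z) = 2/(\pi z)$, which, when combined with the modulus-phase decomposition, yields
\[
\theta_{\nu}'(z) \;=\; \frac{2}{\pi z\, M_{\nu}(z)^{2}}.
\]
Once we have polyhomogeneity (plus non-vanishing) of $M_{\nu}^{2}$ on $U$, the function on the right is the reciprocal of a non-vanishing polyhomogeneous conormal function times a smooth non-vanishing factor $z$, and so it lifts to a polyhomogeneous conormal function on $U$. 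Theorem \ref{thm:mainalpha} therefore becomes the assertion that the $z$-antiderivative of a polyhomogeneous conormal function on $U$ is again polyhomogeneous conormal on $U$, with an appropriate initial condition.

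\textbf{Initial condition and integration.} For the initial condition I would use a point deep inside obe. Recall that for $z<\nu$ with $z/\nu$ bounded away from $1$, the standard Debye estimates show $J_{\nu}(z)$ is exponentially small in $\nu$ and positive, while $Y_{\nu}(z)$ is exponentially large in $\nu$ and negative; with the DLMF branch convention $\theta_{\nu}(0^{+})=-\pi/2$, this forces $\theta_{\nu}(z)=-\pi/2+O(e^{-c\nu})$. Thus, writing
\[
\theta_{\nu}(z) \;=\; -\frac{\pi}{2} \;+\; \bigl(\theta_{\nu}(z_0(\nu))+\tfrac{\pi}{2}\bigr) \;+\; \int_{z_0(\nu)}^{z} \frac{2\,dw}{\pi w\,M_{\nu}(w)^{2}},
\]
with $z_0(\nu)=\nu/2$ (for instance), the middle term is exponentially small in $\nu$ and hence trivially polyhomogeneous, and the theorem reduces to showing that the indefinite integral is polyhomogeneous on $U$. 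To analyze this integral I would work face by face in the local coordinates on $Q$: near obe, use $(\mu,t)$ with $t=z/\nu\in(0,1)$ and integrate the exponentially small integrand, which clearly remains polyhomogeneous with trivial expansion at obe; near abe, use $(\mu,t)$ with $t>1$ and observe that the leading behavior of $(z M_{\nu}^2)^{-1}$ recovers the derivative of the Debye phase $\nu(\sqrt{t^2-1}-\mathrm{arcsec}\, t)$, which is $\nu$ times a smooth function of $t$, giving the expected leading term; near ae use $\zeta=z^{-1/3}$ and note that $dz = -3\zeta^{-4}d\zeta$, so the leading $\zeta^{-4}$ antiderivative produces the expected $z=\zeta^{-3}$ term; near fe, use the parabolic coordinates introduced for the second blow-up (which trade $z-\nu$ for a variable of size $\nu^{1/3}$), in which the integrand is smooth and the leading integrated behavior reproduces the Airy-type phase $\tfrac{2}{3}(-\zeta)^{3/2}$.

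\textbf{Main obstacle.} The delicate part is the integration step, not the algebraic reduction. On a manifold with corners, integrating a polyhomogeneous function with respect to a coordinate that is not itself a global boundary defining function can introduce logarithmic terms at indicial values, and must be set up so that the path from $z_0(\nu)$ to $z$ stays inside $U$ uniformly as one approaches the various boundary hypersurfaces and corners. Concretely, one must check that across each of the three faces obe, fe, abe the integrand's index set is such that integration shifts it correctly and that, at the transitions (the corners obe$\,\cap\,$fe and fe$\,\cap\,$abe), the two one-sided expansions match the joint expansion in the two relevant boundary defining functions. I would handle this by a partition of unity on $U$, integrating each piece in coordinates adapted to the face it is supported near, using the standard push-forward theorem for polyhomogeneous distributions to see that $z$-integration (equivalently, integration along the fibers of a $b$-fibration built from the projection $(z,\nu)\mapsto\nu$) preserves polyhomogeneity. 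Matching across the parabolic blow-up at fe is the subtlest check, but is clean once the index sets of $M_{\nu}^{2}$ at fe are known, since the integrand is already smooth in the blown-up coordinates by the assumed polyhomogeneity of $M_{\nu}^{2}$.
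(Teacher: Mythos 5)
Your starting point is exactly the paper's: equation \eqref{eq:phaseformula} expresses $\theta_{\nu}(z)$ as $-\pi/2$ plus an integral of $2/(\pi u M_{\nu}^2(u))$, and the paper also reduces the theorem to (i) controlling the contribution of the integral where $u$ is small and (ii) showing that integration preserves polyhomogeneity where $u$ is large. The paper breaks at the fixed point $u=1$ rather than at your $\nu$-dependent $z_0(\nu)=\nu/2$; your choice is geometrically more natural but introduces an extra wrinkle (differentiating a moving endpoint) that the paper avoids. However, two genuine gaps remain in your proposal.

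First, the claim that $\theta_{\nu}(z_0(\nu))+\pi/2$ is ``exponentially small \ldots\ and hence trivially polyhomogeneous'' is too quick: for a function of $\nu$ to be polyhomogeneous conormal with empty index set at $\nu=\infty$ you must bound not only its size but all of its $\nu$-derivatives, and a bare decay estimate does not give those. The paper proves this carefully for its $\int_0^1$ term: Lemma~\ref{lem:expdecayest} gives quantitative bounds on $\partial_{\nu}^k M_{\nu}^2(z)$ (using Nicholson's formula, a change of variables, and estimates on $\Gamma^{(k)}/\Gamma$), and then Fa\`a di Bruno's formula is used to control $\partial_{\nu}^k$ of $1/(uM_{\nu}^2(u))$. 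This work is not optional and is nontrivial.

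Second, and more seriously, the integration-preserves-polyhomogeneity step is the crux of the proof, and your description of it --- partitions of unity plus the ``standard push-forward theorem for a $b$-fibration built from the projection $(z,\nu)\mapsto\nu$'' --- is confused and omits the essential construction. The projection $(z,\nu)\mapsto\nu$ collapses all of $z$ and would produce a function of $\nu$ alone; what is actually needed is a \emph{triple} space in the variables $(\tilde u,\mu,\zeta)$ with a map down to $(\mu,\zeta)$ whose fibers are the integration intervals. The paper constructs this space $\hat U$ by an explicit sequence of blow-ups (of all pairwise and triple corners, then quasihomogeneous blow-ups along the lifted diagonals, in the style of the triple scattering space of \cite{mel94}), verifies in Proposition~\ref{prop:triplescat} that the two relevant projections are $b$-fibrations, and only then invokes pullback and pushforward. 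That the $z$-integration is a $b$-fibration is precisely what one must \emph{prove}, not assume, and the index-set matching across obe$\cap$fe and fe$\cap$abe that you correctly flag as delicate falls out of that construction rather than being a separate checklist item. Without the triple-space construction your argument does not close.
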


\begin{theorem}\label{thm:mainmodulus} The modulus $M_{\nu}(z)$ lifts to a function which is polyhomogeneous conormal on $U_a$. Moreover, the function
\[M_{\nu}(z)e^{\sqrt{\nu^2-z^2}-\nu\cosh^{-1}(\nu/z)}\]
lifts to a function which is polyhomogeneous conormal on $U_o$.
\end{theorem}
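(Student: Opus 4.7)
The plan is to work with the modulus-squared $\mathcal{M}_{\nu}(z) := M_{\nu}(z)^2 = J_{\nu}(z)^2 + Y_{\nu}(z)^2$ rather than with $M_{\nu}$ and $\theta_{\nu}$ directly. The Bessel Wronskian $J_{\nu} Y_{\nu}' - J_{\nu}' Y_{\nu} = 2/(\pi z)$ translates, in terms of modulus and phase, to $M_{\nu}^2 \theta_{\nu}'(z) = 2/(\pi z)$, so $\theta_{\nu}'(z) = 2/(\pi z \mathcal{M}_{\nu}(z))$. Once I prove that $\mathcal{M}_{\nu}$ is strictly positive and polyhomogeneous conormal on $U_a$, and that $e^{2\sqrt{\nu^2-z^2}-2\nu\cosh^{-1}(\nu/z)}\mathcal{M}_{\nu}$ is strictly positive and polyhomogeneous conormal on $U_o$, Theorem~\ref{thm:mainmodulus} follows by taking a positive square root, and Theorem~\ref{thm:mainalpha} follows by integrating $2/(\pi z \mathcal{M}_{\nu})$ in $z$ and fixing the constant of integration from the known value $\theta_{\nu}(z) - z + \nu\pi/2 + \pi/4 \to 0$ as $z \to \infty$. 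No logarithmic terms appear in this integration because $\theta_{\nu}'(z) - 1$ expands in powers of $z^{-2}$ with no $z^{-1}$ coefficient.

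\textbf{Face-by-face expansions.} On each boundary hypersurface of $Q$, I would read off the expansion of $\mathcal{M}_{\nu}$ (or of its exponentially renormalized version on oe and obe) from a standard regime. At ae the classical large-$z$ expansion gives $\mathcal{M}_{\nu}(z) \sim (2/\pi z)\sum_k c_k(\nu) z^{-2k}$. At abe the Debye expansions for $z/\nu>1$ fixed, squared and summed, give a polyhomogeneous expansion in $\nu^{-1}$ with coefficients smooth in $z/\nu$. At obe the Debye expansions for $z/\nu<1$ have an exponentially large $Y_{\nu}$, and squaring produces $\mathcal{M}_{\nu}$ as $e^{2\nu\cosh^{-1}(\nu/z)-2\sqrt{\nu^2-z^2}}$ times a polyhomogeneous series, giving the stated renormalization. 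At fe, Olver's uniform expansion writes $J_{\nu}$ and $Y_{\nu}$ as combinations of $\Ai(\nu^{2/3}\zeta)$, $\Bi(\nu^{2/3}\zeta)$ and their derivatives with coefficients smooth in Olver's variable $\zeta=\zeta(z/\nu)$; the combinations $\Ai^2+\Bi^2$, $(\Ai')^2+(\Bi')^2$ and the cross term that enter $\mathcal{M}_{\nu}$ are smooth near $\zeta=0$ and have polyhomogeneous asymptotics as $\nu^{2/3}|\zeta|\to\infty$. Finally, at oe the power series $J_{\nu}(z)\sim (z/2)^{\nu}/\Gamma(\nu+1)$, the leading behavior $Y_{\nu}(z)\sim -(\Gamma(\nu)/\pi)(z/2)^{-\nu}$, and Stirling's formula produce the required structure after the same exponential renormalization.

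\textbf{Corner matching and main obstacle.} The substantive work is to glue the face-wise expansions into a single polyhomogeneous function on $U_a$ (or on $U_o$ after renormalization), not merely to have them hold separately on each face. The corner abe$\cap$ae is a consistency check on double asymptotic series in $\nu^{-1}$ and $z^{-1}$, and obe$\cap$oe is the analogous check, facilitated by the condition $z^{-1/3}-\nu^{-1/3}>3\nu^{-1}$ in the definition of $U_o$ which keeps us a definite distance from the lift of $\{z=\nu\}$ so that $\cosh^{-1}(\nu/z)$ remains smooth. The main obstacle is the matching at the corners obe$\cap$fe and fe$\cap$abe, where the Airy-type expansion at fe must agree with the Debye expansions. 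This is exactly the purpose of the parabolic blow-up creating fe: Olver's variable $\zeta$ should lift to a smooth boundary-defining function for fe on $Q$, so that the smoothness of Olver's coefficients in $\zeta$ becomes joint polyhomogeneity in $(\nu^{-1},\zeta)$ in a full neighborhood of fe, and the Airy asymptotics as $\nu^{2/3}\zeta\to\pm\infty$ then reproduce the Debye coefficients term by term, turning Olver's remainder estimates into polyhomogeneous error bounds. A cleaner alternative I would pursue in parallel is to analyze the third-order linear ODE satisfied by $\mathcal{M}_{\nu}$ (the symmetric square of Bessel's equation) by WKB methods adapted to $Q$, constructing a polyhomogeneous solution globally in one step and thereby sidestepping the need to stitch several disjoint standard expansions together.
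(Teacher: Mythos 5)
Your reduction to the modulus-squared is the same starting point the paper uses, and you correctly identify the Wronskian relation (the paper's formula (1.4)). But there are two genuine gaps in the proposal, and they go to the heart of what the theorem is claiming.

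First, you never actually prove $\mathcal{M}_{\nu}$ is polyhomogeneous conormal; you identify the gluing at corners as ``the main obstacle'' and then leave it unfilled. This is not a small gap. Polyhomogeneous conormality is strictly stronger than having an asymptotic expansion on each face: it requires joint expansions uniform down to the corners, \emph{and} it requires the remainder bounds to persist after arbitrarily many b-vector fields are applied. Olver's uniform estimates give neither of these for free. His remainders are uniform sup-norm bounds, not bounds on $\nu$- and $z$-derivatives; and the paper explicitly lists term-by-term differentiability in $\nu$ as one of the two ways its result improves on the classical literature. Saying that ``Olver's remainder estimates'' become ``polyhomogeneous error bounds'' once you observe that his variable lifts nicely to $Q$ is asserting the conclusion, not proving it. The paper's actual route is entirely different: it never stitches together classical expansions. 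Instead it uses Nicholson's integral formula for $M_{\nu}^2$, shows the integrand is polyhomogeneous on a carefully constructed blown-up space ($X_6$, and the sharper $X^{\sharp}$ for the $U_o$ part), and invokes Melrose's pushforward theorem. That gives polyhomogeneity, including all b-derivative bounds, in a single stroke; the known classical expansions enter only afterwards, and only to identify leading coefficients. Your alternative ODE/WKB route is a genuinely different idea, but it is also unexecuted.

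Second, ``Theorem~\ref{thm:mainmodulus} follows by taking a positive square root'' is a missing step, not a trivial one. The square root of a positive polyhomogeneous conormal function need not be polyhomogeneous: $M_{\nu}^2$ vanishes at the boundary (leading orders $2$ at fe and $3$ at abe, ae), so you are taking the square root of something that degenerates, and if the leading coefficient involved a logarithm you would get fractional powers of $\log$, which are forbidden. The paper addresses this with Theorem~\ref{thm:appendixproperpowers}, whose hypotheses are that the function be ``proper'' (i.e.\ writable as $\pm\rho^{\inf\mathcal{E}}$ times a pc factor bounded above and below) and ``non-logarithmic at leading order,'' and then Proposition~\ref{prop:idleading} verifies these hypotheses for $M_{\nu}^2$ by computing its leading behavior at each face and corner. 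Your proposal does not mention either hypothesis and would need both.

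A minor note: your observation about $\theta_{\nu}'$ having no $z^{-1}$ term concerns the phase, not the modulus, and the phase analysis in the paper is substantially more involved (splitting the integral at $u=1$, explicit Gamma-function estimates for the small-$u$ part, and a triple-space b-fibration for the large-$u$ part).
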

\noindent Note the appearance of the exponential in Theorem \ref{thm:mainmodulus}. The point is that $M_{\nu}(z)$ grows exponentially at oe and obe. This theorem characterizes the exponential growth, showing that $M_{\nu}(z)$ equals $\exp[-(\sqrt{\nu^2-z^2}-\nu\cosh^{-1}(\nu/z))]$ times a function which is polyhomogeneous conormal.

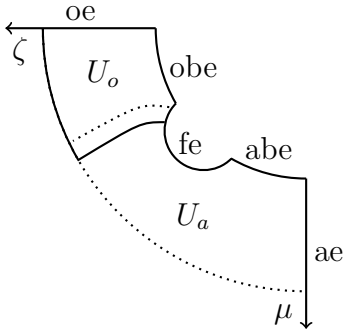
\begin{figure}
\centering
\begin{tikzpicture}
\draw[black,thick,<-](-2,2)--(0,2);
\draw[black,thick,->](2,0)--(2,-2);
\draw[thick](0,2) arc (180:210:2);
\draw[thick](2,0) arc (270:240:2);
\draw[thick](.27,1) arc (135:315:0.52);
\draw[thick,dotted] (-1.5,2) arc (180:270:3.5);
\draw[thick] (-1.5,2) arc (180:210:3.5);
\draw[thick] (-1.04,0.24) .. controls (-0.2,0.75) .. (0.12,0.75);
\draw[thick,dotted] (0.18,0.95) .. controls (-0.2,1) .. (-1.1,0.5);
\node at (-1,2.2) {oe};
\node at (.5,1.5) {obe};
\node at (.46,.47) {fe};
\node at (1.5,.4) {abe};
\node at (2.3,-1) {ae};
\node at (1.7,-1.8) {$\mu$};
\node at (-1.8,1.7) {$\zeta$};
\node at (-0.7,1.4) {$U_o$};
\node at (0.5,-0.5) {$U_a$};
\end{tikzpicture}
\caption{Illustration of $U_o$ and $U_a$.}
\label{fig:uoua}
\end{figure}

For the reader who is unfamiliar with polyhomogeneous conormal functions, these results should be thought of in the following sense: the functions involved have asymptotic expansions at each boundary hypersurface of $U$, of the form
\[f\sim\sum_{(s,p)\in E}a_{(s,p)}x^s(\log x)^p,\]
where $x$ is a defining function for that boundary hypersurface and $E\subseteq\mathbb C\times\mathbb N_0$ satisfies certain finiteness conditions. The existence of expansions of this form for Bessel functions is, as discussed, not new. However, polyhomogeneity means that these previously-known asymptotic expansions are joint, in a suitable sense, down to each corner of $U$, which allows us to analyze transitional asymptotic regimes. And conormality means that these asymptotic expansions, and the associated remainder estimates, are preserved when a vector field tangent to the boundary is applied. As we will see, this allows us to differentiate the previously existing asymptotic expansions term by term to get expansions for derivatives of Bessel functions.

The key idea of the proofs is to take advantage of integral formulas for the modulus and phase. The first is Nicholson's formula for the square of the modulus \cite[6.664.4]{gr}:
\begin{equation}\label{eq:nicholson}
M_{\nu}^2(z)=\frac{8}{\pi^2}\int_0^{\infty}K_0(2z\sinh t)\cosh(2\nu t)\, dt.
\end{equation}
The second follows immediately from \cite[10.18.3,10.18.8]{dlmf}:
\begin{equation}\label{eq:phaseformula}
\theta_{\nu}(z) = -\frac{\pi}{2}+\frac{2}{\pi}\int_0^z\frac{1}{uM_{\nu}^2(u)}\, du.
\end{equation}
Polyhomogenous conormal functions are well-behaved under integration, which allows the analysis first of $M_{\nu}^2(z)$ and then of $M_{\nu}(z)$ and $\theta_{\nu}(z)$.

The plan of the paper is as follows. Section 2 is devoted to putting our results in context without going too deeply into the technical details. We connect them with the existing literature and discuss the specific form of the asymptotic expansions at each boundary hypersurface. We give several examples of their utility, analyzing the behavior of Bessel functions in an unusual asymptotic regime and also proving term-by-term differentiability of the ``transition region'' expansions \cite[10.19.8]{dlmf} at fe. We also discuss potential applications to long-standing conjectures in spectral theory concerning the eigenvalues of the Laplacian on a planar disk.

Sections 3--6 comprise the proof of the main results. We begin in section 3 by analyzing the square of the modulus using Nicholson's formula \eqref{eq:nicholson}. We first prove polyhomogeneity of a relative of $K_0(x)$, then construct a space on which we can prove that the integrand in \eqref{eq:nicholson} is polyhomogeneous conormal. An application of Melrose's pushforward theorem completes this analysis. In section 4, we expand the analysis in section 3 by proving refined asymptotics for $M_{\nu}^2(z)$ in the regime $z<<\nu$. Although $M_{\nu}^2(z)$ blows up, we are able to show that it equals an explicit exponential function times a polyhomogeneous conormal function. The strategy is much the same as section 3, constructing a blown-up space, proving that an integrand is polyhomogeneous conormal, then using the pushforward theorem. In section 5 we pass from modulus-squared to modulus and prove Theorem \ref{thm:mainmodulus}. In section 6 we use \eqref{eq:phaseformula} combined with some blow-up analysis and some explicit estimates to prove Theorem \ref{thm:mainalpha}.

There are three appendices as well; the first is expository, containing background information on polyhomogeneous conormal functions on manifolds with corners. The second contains a selection of useful new results on composition and exponentiation of polyhomogeneous conormal functions. And the third contains combinatorial analysis of the specific maps between manifolds with corners that we use in sections 3, 4, and 6.\\

\textbf{Acknowledgements:} The author is deeply grateful to Daniel Grieser for helpful conversations and feedback, as well as to Rafe Mazzeo for encouragement. Thanks also to Iosif Polterovich and Asma Hassannezhad for discussions. Special thanks to Ian Petrow for asking the question, many years ago, that served as the inspiration for this research. This research was partially supported by a Faculty Summer Research Grant in summer 2021 from DePaul University.


\section{Applications and Examples}

Our first task is to connect our main results to the previously known asymptotic expansions of Bessel functions summarized in \cite{dlmf}. We focus on the more interesting faces ae, abe, and fe. In order to do this we must discuss local coordinate systems on our manifold with corners $Q$. It is easiest to use projective coordinates and we do so throughout. 

First consider coordinates on $Q_1$. Set
\begin{equation}
\lambda:=\frac{\mu}{\zeta};\quad \eta:=\frac{\zeta}{\mu};\quad w=\eta-1.
\end{equation}
Then, near the corner be$\cap$oe but away from ae, we may use the coordinates $(\zeta,\lambda)$. Conversely, near be$\cap$ae but away from oe, we may use $(\mu,\eta)$. In the interior of be away from both oe and ae, we may use either of these, but we may also use $(\mu,w)$, in which $\{w=0\}$ is the lift of the diagonal $\{z=\nu\}$. 

As for coordinates on $Q$, $Q$ is obtained from $Q_1$ via a quasihomogeneous blow-up of $\{\mu=w=0\}$. Introduce the coordinates
\[\hat w_{\pm}:=\sqrt{\pm w};\quad \hat\mu_{\pm}:=\frac{\mu}{\hat w_{\pm}};\quad \tilde w:=\frac{w}{\mu^2}.\]
Then near the intersection of fe and abe we use $(\hat w_{-},\hat \mu_{-})$, near the intersection of fe and obe we use $(\hat w_{+},\hat \mu_{+})$, and in the interior of fe we use $(\tilde w,\mu)$. Elsewhere the situation is unchanged: near obe$\cap$oe we use $(\zeta,\lambda)$, and near abe$\cap$ae we use $(\mu,\eta)$. The polyhomogeneous expansions claimed in our main Theorems should be read as joint asymptotic expansions in these coordinates. These coordinates are illustrated in Figure \ref{fig:coordinatesonq}.

\begin{figure}
\centering
\begin{tabular}{p{6cm} p{6cm}}
\begin{subfigure}[b]{0.3\textwidth}
\centering
\begin{tikzpicture}
\draw[black,thick,<-](-2,2)--(0,2);
\draw[black,thick,->](2,0)--(2,-2);
\draw[thick](0,2) arc (180:270:2);
\draw[thick,->] (-0.2,1.8)--(-0.8,1.8);
\draw[thick,->] (-0.2,1.8) arc (183:200:2);
\draw[thick,->] (1.8,-0.2)--(1.8,-0.8);
\draw[thick,->] (1.8,-0.2) arc (267:250:2);
\draw[thick,->] (0.45,0.45) arc (225:216:2);
\draw[thick] (0.45,0.45) arc (225:233:2);
\draw[dashed,thick](-0.5,-0.5)--(0.59,0.59);
\node at (1.7,-1) {$\mu$};
\node at (-1,1.7) {$\zeta$};
\node at (-0.3,1.2) {$\lambda$};
\node at (1.2,-0.4) {$\eta$};
\node at (-0.75,-0.75) {$\{w=0\}$};
\node at (0.05,0.5) {$w$};
\node at (-1,2.2) {oe};
\node at (.8,.8) {be};
\node at (2.3,-1) {ae};
\end{tikzpicture}
\end{subfigure}
&
\begin{subfigure}[b]{0.3\textwidth}
\centering
\begin{tikzpicture}
\draw[black,thick,<-](-2,2)--(-1,2);
\draw[black,thick,->](2,-1)--(2,-2);
\draw[thick](-1,2) arc (180:210:3);
\draw[thick](2,-1) arc (270:240:3);
\draw[thick](-0.6,0.5) arc (135:315:0.78);
\draw[thick,->] (-1.2,1.8)--(-1.8,1.8);
\draw[thick,->] (-1.2,1.8) arc (182:192:3);
\draw[thick,->] (-0.85,0.5) arc (210:200:3);
\draw[thick,->] (-0.85,0.5) arc (135:175:0.8);
\draw[thick,->] (1.8,-1.2)--(1.8,-1.8);
\draw[thick,->] (1.8,-1.2) arc (268:258:3);
\draw[thick,->] (0.5,-0.85) arc (240:250:3);
\draw[thick,->] (0.5,-0.85) arc (315:275:0.8);
\draw[dashed,thick](-0.6,-0.6)--(-1.5,-1.5);
\draw[thick,->] (-0.8,-0.8) arc (225:206:1);
\draw[thick] (-0.8,-0.8) arc (225:243:1);
\node at (-1.25,-1.75) {$\{\tilde w=0\}$};
\node at (-1.5,2.2) {oe};
\node at (-0.35,1.25) {obe};
\node at (-0.3,-0.3) {fe};
\node at (1.25,-0.55) {abe};
\node at (2.3,-1.5) {ae};
\node at (1.55,-1.8) {$\mu$};
\node at (-1.9,1.7) {$\zeta$};
\node at (-1.4,1.4) {$\lambda$};
\node at (-1.3,0.75) {$\hat w_{+}$};
\node at (-1.35,0.1) {$\hat\mu_{+}$};
\node at (-1.25,-0.65) {$\tilde w$};
\node at (0.05,-1.3) {$\hat\mu_{-}$};
\node at (0.8,-1.3) {$\hat w_{-}$};
\node at (1.4,-1.45) {$\eta$};
\end{tikzpicture}
\end{subfigure}
\end{tabular}
\caption{Local projective coordinates on $Q_1$ and $Q$.}
\label{fig:coordinatesonq}
\end{figure}
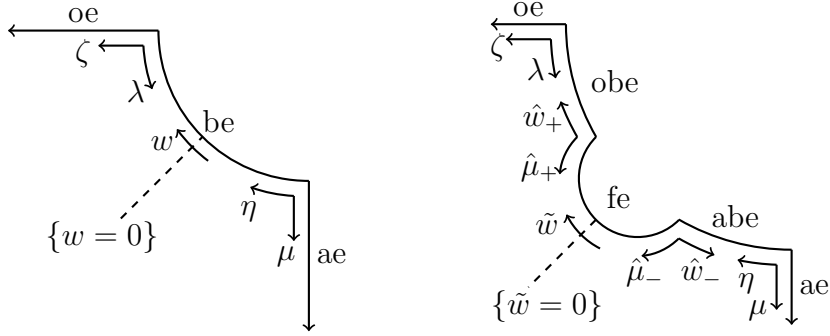

\subsection{Expansions at ae}
Let us discuss what this means in practice, beginning with the face ae. The face ae is the large-argument face. In the interior of ae, we can use $\zeta=z^{-1/3}$ as a boundary defining function and $\nu$ itself as the other coordinate. So the expansions for the modulus-squared and phase given in \cite[10.18.17-18]{dlmf} are polyhomogeneous conormal as functions of $(\zeta,\nu)$. These expansions are:
\[M_{\nu}^2(z) \sim \frac{2}{\pi}\zeta^3(1+\frac 18(4\nu^2-1)\zeta^6+\dots); \]
\begin{equation}\label{eq:thetaae}\theta_{\nu}(z) \sim \zeta^{-3} - (\frac 12\nu+\frac 14)\pi + \frac 18(4\nu^2-1)\zeta^3 + \dots \end{equation}
A key advantage of polyhomogeneity in this context is that it allows differentiation term by term. Specifically, we may apply any number of b-vector fields to this expansion term by term, preserving the same error bounds. Note that 
\[z\frac{\partial}{\partial z}=-\frac 13\zeta\frac{\partial}{\partial\zeta},\]
where $\zeta\partial_{\zeta}$ is a $b$-vector field. Thus a $z$-derivative is $z^{-1}$ times a derivative which does not change the size of the error term in these expansions. Therefore the expansions for $M_{\nu}(z)$ and $\theta_{\nu}(z)$ can be differentiated term by term with respect to $z$, which indeed brings down an extra factor of $z^{-1}$. This can be used to study $J_{\nu}(z)$, $Y_{\nu}(z)$, and arbitrarily many derivatives of either. For example, we can compute
\[J'_{\nu}(z) = M_{\nu}'(z)\cos(\theta_{\nu}(z)) - \theta'_{\nu}(z)M_{\nu}(z)\sin(\theta_{\nu}(z))\]
and use our asymptotic understanding of $M_{\nu}(z)$, $\theta_{\nu}(z)$, and their derivatives to get precise asymptotic understanding of $J'_{\nu}(z)$. This can be used to re-derive, and extend to higher derivatives, the asymptotic expansions of Bessel function derivatives for large argument given in \cite[10.17.9-10]{dlmf}. 

Additionally, since $\partial_{\nu}$ is a b-vector field (away from abe), this expansion may be differentiated term by term with respect to order, with the same error bounds.

\subsection{Expansions at abe and compatibility with ae}
The known asymptotic expansions for Bessel functions at abe, where $\nu$ and $z$ both go to infinity with $\nu/z$ approaching a constant less than 1, are Debye's expansions \cite[10.19.6]{dlmf} (the expansions in \cite[10.20.4-5]{dlmf} also work). Modifying that notation slightly, if $\beta\in(0,\pi/2)$ is fixed, and $\xi=\nu(\tan\beta-\beta)-\frac 14\pi$, then
\[J_{\nu}(\nu\sec\beta) = (\frac{2}{\pi\nu\tan\beta})^{1/2}(\cos\xi\sum_{k=0}^{\infty}u_{2k}(\beta)\nu^{-2k} + \sin\xi\sum_{k=0}^{\infty}u_{2k+1}(\beta)\nu^{-2k-1}),\]
with a similar expansion for $Y_{\nu}$. Here $u_{0}(\beta)=1$ and $u_1(\beta)=\frac{1}{24}(3\cot\beta+5\cot^3\beta)$.

On $Q$, good coordinates on abe are $(\eta,\mu)$. In terms of $\eta$ and $\mu$, $\nu=\mu^{-3}$ and $\beta=\arccos(\eta^3)$. So Debye's expansions are asymptotic expansions as $\mu\to 0$ with $\eta$ fixed and thus are expansions at abe. We know that $M_{\nu}^2(z)$ and $\theta_{\nu}(z)$ have polyhomogeneous expansions at abe, and we can use Debye's expansions to figure out what these expansions are. After some calculation, the first term and error are
\begin{equation}\label{eq:mabe}
M_{\nu}^2(\mu^{-3}\eta^{-3})=\frac{2\mu^3\eta^3}{\pi\sqrt{1-\eta^6}}+O(\mu^9).
\end{equation}
We can also get $\theta_{\nu}(\mu^{-3}\eta^{-3})$. This is a little bit more complicated because the expansions are stated in terms of $\cos\xi$ and $\sin\xi$ rather than $\cos\theta$ and $\sin\theta$. Nevertheless, using some trig identities, we see that for some $n_0\in\mathbb Z$ (which we later show is zero),
\[\theta=\xi+\arctan\big(\frac{\sum_{k=0}^{\infty}u_{2k+1}(\beta)\nu^{-2k-1}}{\sum_{k=0}^{\infty}u_{2k}(\beta)\nu^{-2k}}\big)+2\pi n_0.\]
Rewriting and expanding yields
\begin{equation}\label{eq:thetaabe}
\theta_{\nu}(\mu^{-3}\eta^{-3})=\mu^{-3}(\frac{\sqrt{1-\eta^6}}{\eta^3}-\arccos(\eta^3))-\frac 14\pi + 2\pi n_0 +\mu^3(\frac {\eta^3}{8\sqrt{1-\eta^6}}
+ \frac{5}{24}\frac{\eta^9}{(1-\eta^6)^{3/2}}) + O(\mu^9).\end{equation}

The coordinates $(\eta,\mu)$ are valid down to the intersection of abe and ae. So the expansion \eqref{eq:thetaabe} must be consistent with the expansion \eqref{eq:thetaae}. To check this, write \eqref{eq:thetaae} in these coordinates:
\begin{equation}\label{eq:thetaae2}\theta_{\nu}(\mu^{-3}\eta^{-3}) \sim \mu^{-3}\eta^{-3} - (\frac 12\mu^{-3}+\frac 14)\pi + \frac 18(4\mu^{-3}-\mu^3)\eta^3 + O(\eta^6). \end{equation}
To check consistency we must compute the leading order behavior of the coefficients of \eqref{eq:thetaabe} as $\eta\to 0$. Taylor expansions yield
\[(\frac{\sqrt{1-\eta^6}}{\eta^3}-\arccos(\eta^3))=\eta^{-3} - \frac{\pi}{2} +  \frac 12\eta^3 + O(\eta^6);\]
\[(\frac {\eta^3}{8\sqrt{1-\eta^6}}
+ \frac{5}{24}\frac{\eta^9}{(1-\eta^6)^{3/2}})=\frac 18\eta^3+O(\eta^9).\]
So the expansion \eqref{eq:thetaabe} is
\[\mu^{-3}(\eta^{-3}-\frac{\pi}{2}+\frac 12\eta^3+O(\eta^6))-\frac 14\pi+2\pi n_0+\mu^3(\frac 18\eta^3+O(\eta^9))+O(\mu^9).\]
Comparing this to \eqref{eq:thetaae2}, we see all of the same terms, as long as $n_0=0$. So we must indeed have $n_0=0$ as claimed.

\subsection{Transition region expansions and differentiation}

To analyze the expansions at fe, it is instructive to observe that the coordinate
\[\tilde w=\frac{\nu-z}{z^{1/3}+z^{2/3}\nu^{-1/3}+z\nu^{-2/3}}=-\frac{z-\nu}{\nu^{1/3}}\frac{1}{(z/\nu)^{1/3}+(z/\nu)^{2/3}+(z/\nu)}.\]
The second fraction is a smooth function on $Q$ away from oe, equal to $\frac 13$ at fe. Thus we may change coordinates from $\tilde w$ to $a:=\frac{z-\nu}{\nu^{1/3}}$, and use the coordinates $(a,\nu^{-1/3})$ on $Q$. Note that $a$ has the opposite sign to $\tilde w$. Theorems \ref{thm:mainalpha} and \ref{thm:mainmodulus} now guarantee that $\theta_{\nu}(z)$ and $M_{\nu}(z)$ have polyhomogeneous expansions in these coordinates for $a$ lying in any compact set. In fact, here, we get more: since $\theta_{\nu}(z)$ is bounded on any compact subset of the interior of fe, $\cos(\theta_{\nu}(z))$ and $\sin(\theta_{\nu}(z))$ are both themselves polyhomogeneous by Theorem \ref{thm:comp}. And polyhomogeneous functions form an algebra, so multiplying by $M_{\nu}(z)$ preserves polyhomogeneity. This shows that $J_{\nu}(z)$ and $Y_{\nu}(z)$ themselves have polyhomogeneous expansions in $(a,\nu^{-1/3})$ at fe.

But we know what those expansions must be. Since asymptotic expansions are unique, they must be the known transition region expansions of Bessel functions \cite[10.19.8]{dlmf}:
\begin{equation}\label{eq:dlmf10198}
J_{\nu}(\nu+a\nu^{1/3})=\frac{2^{1/3}}{\nu^{1/3}}\textrm{Ai}(-2^{1/3}a)\sum_{k=0}^{\infty}\frac{P_k(a)}{\nu^{2k/3}} + \frac{2^{2/3}}{\nu}\textrm{Ai}'(-2^{1/3}a)\sum_{k=0}^{\infty}\frac{Q_k(a)}{\nu^{2k/3}},\end{equation}
\begin{equation}\label{eq:dlmf10198v2}
Y_{\nu}(\nu+a\nu^{1/3})=-\frac{2^{1/3}}{\nu^{1/3}}\textrm{Bi}(-2^{1/3}a)\sum_{k=0}^{\infty}\frac{P_k(a)}{\nu^{2k/3}} - \frac{2^{2/3}}{\nu}\textrm{Bi}'(-2^{1/3}a)\sum_{k=0}^{\infty}\frac{Q_k(a)}{\nu^{2k/3}}.\end{equation}
Here Ai and Bi are the Airy functions and $P_k(a)$, $Q_k(a)$ are explicit polynomials, see \cite[10.9.10-11]{dlmf}. The upshot is:
\begin{proposition} For $a$ in any compact subset of $\mathbb R$, the asymptotic expansions \eqref{eq:dlmf10198} and \eqref{eq:dlmf10198v2} are polyhomogeneous expansions in $(a,\nu^{-1/3})$.
\end{proposition}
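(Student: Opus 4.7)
The plan is to apply Theorems \ref{thm:mainalpha} and \ref{thm:mainmodulus} in the coordinates $(a,\mu)$ with $\mu = \nu^{-1/3}$, then use the composition and algebra properties of polyhomogeneous conormal functions to transfer polyhomogeneity to $J_\nu(z)$ and $Y_\nu(z)$, and finally invoke uniqueness to identify Olver's transition region expansion as the polyhomogeneous one.

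Since the change of coordinates from $(\tilde w,\mu)$ to $(a,\mu)$ is multiplication by a smooth nonvanishing factor on $Q$ away from oe, $(a,\mu)$ forms a smooth coordinate system on a neighborhood of any compact subset $K$ of the interior of fe. Given such a $K$, parameterized by $a \in [-A,A]$, I would choose the sets $U$ and $U_a$ appearing in the main theorems so that their interiors contain $K$, which is possible since $K$ is bounded away from the corners fe$\cap$obe and fe$\cap$ae. Theorems \ref{thm:mainalpha} and \ref{thm:mainmodulus} then yield polyhomogeneous expansions of $\theta_\nu(z)$ and $M_\nu(z)$ in $(a,\mu)$ on a neighborhood of $K$. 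Next I would verify that $\theta_\nu(\nu+a\nu^{1/3})$ remains bounded as $\nu\to\infty$ for $a$ in a compact set; this can be read off from the limit $\eta\to 1$ of the Debye expansion \eqref{eq:thetaabe}, where the coefficient $\tfrac{\sqrt{1-\eta^6}}{\eta^3}-\arccos(\eta^3)$ of $\mu^{-3}$ vanishes to order $(1-\eta^3)^{3/2}$ and so, after substituting $1-\eta^3 = a\mu^2 + O(\mu^4)$, contributes only an $O(1)$ term as $\nu\to\infty$. With boundedness established, the composition result (Theorem \ref{thm:comp} from the appendices) applies with the smooth functions $\cos$ and $\sin$, giving polyhomogeneity of $\cos\theta_\nu(z)$ and $\sin\theta_\nu(z)$ in $(a,\mu)$ near $K$. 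Since polyhomogeneous conormal functions form an algebra, the products $J_\nu = M_\nu \cos\theta_\nu$ and $Y_\nu = M_\nu \sin\theta_\nu$ are then themselves polyhomogeneous in $(a,\mu)$ on a neighborhood of $K$.

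It remains to identify this polyhomogeneous expansion with the right-hand sides of \eqref{eq:dlmf10198} and \eqref{eq:dlmf10198v2}. Each summand there is the product of a polynomial in $a$, a value of one of the entire functions $\mathrm{Ai}, \mathrm{Bi}, \mathrm{Ai}', \mathrm{Bi}'$ at $-2^{1/3}a$, and an integer power of $\mu = \nu^{-1/3}$ --- precisely the form of a polyhomogeneous expansion at $\mu=0$ with coefficients smooth in $a\in[-A,A]$. Since Olver's series is already known to be asymptotic to $J_\nu$ and $Y_\nu$ in the transition region, and polyhomogeneous expansions are unique, the two series must coincide, proving the proposition. The main technical point in this argument is the boundedness of $\theta_\nu$ on compact $a$-subsets of fe: without it the composition step would fail, as it does near the argument edge ae, where $\theta_\nu$ grows like $\zeta^{-3}$ by \eqref{eq:thetaae} and neither $\cos\theta_\nu$ nor $\sin\theta_\nu$ is polyhomogeneous.
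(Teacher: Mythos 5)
Your proposal follows the same argument as the paper: change coordinates from $\tilde w$ to $a$ by multiplying by a smooth nonvanishing factor, invoke Theorems \ref{thm:mainalpha} and \ref{thm:mainmodulus} to get polyhomogeneity of $\theta_\nu$ and $M_\nu$ in $(a,\nu^{-1/3})$, use boundedness of $\theta_\nu$ on compacts of the interior of fe together with Theorem \ref{thm:comp} to get polyhomogeneity of $\cos\theta_\nu$ and $\sin\theta_\nu$, multiply by $M_\nu$ using the algebra property, and invoke uniqueness of asymptotic expansions to identify the result with Olver's transition-region series. The paper merely asserts the boundedness of $\theta_\nu$; your verification of it via the Debye expansion \eqref{eq:thetaabe} is a useful addition.

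One small flaw in that verification: \eqref{eq:thetaabe} is the expansion at abe, valid for $z>\nu$, i.e.\ $a>0$. The proposition asks for $a$ in an arbitrary compact subset of $\mathbb R$, so you must also cover $a\le 0$, where the relevant face is obe rather than abe. There the phase behaves differently (it tends to a constant near $-\pi/2$ plus exponentially small corrections from the $z<\nu$ Debye expansions), so the argument you gave does not directly apply. A cleaner route to uniform boundedness is to read it off the transition-region asymptotics themselves, or to note from \eqref{eq:phaseformula} that $\theta_\nu\ge -\pi/2$ always and that the integrand decays rapidly past the turning point, giving a two-sided bound for $a$ in any fixed compact set. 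With that patched, your argument is complete and matches the paper's.
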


As for differentiation term by term, the space of b-vector fields consists of $C^{\infty}$-linear combinations of
\[\frac{\partial}{\partial a},\quad \nu^{-1/3}\frac{\partial}{\partial\nu^{-1/3}}.\]
The vector field $\partial/\partial z$ lifts in these coordinates to
\[\frac{1}{\nu^{1/3}}\frac{\partial}{\partial a}.\]
Therefore:
\begin{proposition} For any $\ell$, we have the following asymptotic expansions:
\begin{multline}\label{eq:dlmf10198k}
J_{\nu}^{(\ell)}(\nu+a\nu^{1/3})=\frac{2^{1/3}}{\nu^{(\ell+1)/3}}\sum_{k=0}^{\infty}\nu^{-2k/3}\frac{\partial^{\ell}}{\partial a^{\ell}}(P_k(a)\Ai(-2^{1/3}a)) \\ 
+ \frac{2^{2/3}}{\nu^{(k+3)/3}}\sum_{k=0}^{\infty}\nu^{-2k/3}\frac{\partial^{\ell}}{\partial a^{\ell}}(Q_k(a)\Ai'(-2^{1/3}a)),\end{multline}
\begin{multline}\label{eq:dlmf10198v2k}
Y_{\nu}^{(\ell)}(\nu+a\nu^{1/3})=-\frac{2^{1/3}}{\nu^{(\ell+1)/3}}\sum_{k=0}^{\infty}\nu^{-2k/3}\frac{\partial^{\ell}}{\partial a^{\ell}}(P_k(a)\Bi(-2^{1/3}a)) \\ 
- \frac{2^{2/3}}{\nu^{(k+3)/3}}\sum_{k=0}^{\infty}\nu^{-2k/3}\frac{\partial^{\ell}}{\partial a^{\ell}}(Q_k(a)\Bi'(-2^{1/3}a)).\end{multline}
These expansions are valid uniformly for $a$ in any compact subset of $\mathbb R$, and in each, the error in any partial sum is $O(\nu^{-s})$, where $\nu^{-s}$ is the order of the next largest term in the expansion.
\end{proposition}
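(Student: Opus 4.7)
The plan is to reduce the statement to the preceding proposition together with the fundamental property that polyhomogeneous conormal expansions may be differentiated termwise by $b$-vector fields without loss in the remainder estimates.

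First, holding $\nu$ fixed, the chain rule gives $\partial_a\bigl[J_\nu(\nu+a\nu^{1/3})\bigr]=\nu^{1/3}J_\nu'(\nu+a\nu^{1/3})$, and iterating $\ell$ times yields
\[
J_\nu^{(\ell)}(\nu+a\nu^{1/3})=\nu^{-\ell/3}\,\partial_a^{\ell}\bigl[J_\nu(\nu+a\nu^{1/3})\bigr],
\]
with the analogous identity for $Y_\nu$. Thus it is enough to apply $\partial_a^{\ell}$ termwise to the polyhomogeneous expansions \eqref{eq:dlmf10198} and \eqref{eq:dlmf10198v2}, and then multiply the result by $\nu^{-\ell/3}$.

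Second, in the coordinates $(a,\mu)$ with $\mu=\nu^{-1/3}$ on the interior of fe, the space of $b$-vector fields is spanned over $C^{\infty}$ by $\partial_a$ and $\mu\,\partial_\mu$; in particular $\partial_a$ is itself a $b$-vector field. By the defining property of polyhomogeneous conormal functions, any $b$-vector field may be applied to a polyhomogeneous expansion termwise, yielding again a polyhomogeneous expansion with the same index set and preserving the partial-sum remainder estimates. Iterating this $\ell$ times on \eqref{eq:dlmf10198} and \eqref{eq:dlmf10198v2}, and then multiplying by $\mu^{\ell}=\nu^{-\ell/3}$, produces exactly the claimed expansions \eqref{eq:dlmf10198k} and \eqref{eq:dlmf10198v2k}. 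Since this post-multiplication rescales both the next omitted term and the remainder by the same factor of $\nu^{-\ell/3}$, the error in any partial sum remains of the order of the first omitted term, as stated.

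The entire argument hinges on the second step, i.e.\ on the fact that $b$-vector fields preserve polyhomogeneity termwise together with remainder estimates. This is, however, a standard and defining property of the polyhomogeneous conormal framework, and so presents no genuine obstacle. Uniformity in $a$ over compact subsets of $\mathbb{R}$ is automatic because $a$ is an interior coordinate on fe and the expansion guaranteed by the preceding proposition is jointly polyhomogeneous in $(a,\mu)$. The one caveat worth flagging is that this argument does not give control uniformly as $|a|\to\infty$; for that one would need to patch with the Debye-type expansions at abe or obe, which is outside the scope of the present statement.
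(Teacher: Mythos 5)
Your argument is exactly the paper's: the paper observes that $\partial/\partial z$ lifts to $\nu^{-1/3}\partial_a$ in the coordinates $(a,\nu^{-1/3})$, that $\partial_a$ is a $b$-vector field on $Q$ near fe, and then invokes termwise differentiability of the polyhomogeneous expansions \eqref{eq:dlmf10198}--\eqref{eq:dlmf10198v2} from the preceding proposition. Your identity $J_\nu^{(\ell)}(\nu+a\nu^{1/3})=\nu^{-\ell/3}\partial_a^{\ell}[J_\nu(\nu+a\nu^{1/3})]$ is just the unwound form of that lift, and your remarks on uniformity in $a$ over compacta and the unchanged remainder order match the paper's claims.
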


The same analysis may be applied to analyze derivatives of these Bessel functions with respect to order. Note that the vector field $\partial/\partial\nu$ lifts to
\[\frac{\partial a}{\partial\nu}\frac{\partial}{\partial a}+\frac{\partial\mu}{\partial\nu}\frac{\partial}{\partial\mu},\]
with $\mu=\nu^{-1/3}$. Thus this lift is
\[(-\mu-\frac 13a\mu^3)\frac{\partial}{\partial a} -\frac 13\mu^3(\mu\frac{\partial}{\partial\mu}).\]
This calculation allows differentiation of the expansions term by term: the vector fields $\frac{\partial}{\partial a}$ and $\mu\frac{\partial}{\partial\mu}$ preserve the form of the expansions, and multiplication by the pre-factors is easy to analyze. We choose not to formulate the results explicitly here.

Finally, observe that the asymptotic expansions of $M_{\nu}(z)$ and $\theta_{\nu}(z)$ at fe may in principle be read off from \eqref{eq:dlmf10198} and \eqref{eq:dlmf10198v2}. For example, since $P_0(a)=1$, we have
\begin{equation}\label{eq:mfe}
M_{\nu}(\nu+a\nu^{1/3})=\frac{2^{1/3}}{\nu^{1/3}}\sqrt{(\textrm{Ai}^2+\textrm{Bi}^2)(-2^{1/3}a)} + O(\nu^{-1}).
\end{equation}
As many terms as desired may be obtained in this fashion. Reading off $\theta_{\nu}(z)$ is slightly more complicated but can be done via the observation that $\tan\theta_{\nu}(z)=Y_{\nu}(z)/J_{\nu}(z)$.

\subsection{Application to asymptotics in an intermediate regime}

Since polyhomogeneous expansions are joint expansions, they can be used to analyze the asymptotic behavior of Bessel functions in intermediate regimes. Let us illustrate with an example.

Suppose that we want to understand the asymptotic behavior as $\nu\to\infty$ of
\[J_{\nu}(\nu+\sqrt{\nu}).\]
This is a regime that is in between the transition region expansions (where $z=\nu+a\nu^{1/3}$) and the Debye expansions (where $z=c\nu$ for $c>1$), and as such neither of these is sufficient for this purpose. The expansions \cite[10.20.4-5]{dlmf} do cover this regime, but those expansions are not polyhomogeneous.

To begin the analysis, observe that if $z=\nu+\sqrt{\nu}$, then as $\nu\to\infty$, the point $(z,\nu)$ approaches the intersection of the edges fe and abe in $Q_1$ -- literally in between the face fe where the transition region expansions apply and the face abe where the Debye expansions apply. In a neighborhood of the intersection of fe and abe, we use the coordinates $(\hat\mu_{-}, \hat w_{-})$. We now write the expansions for $M_{\nu}(z)$ and $\theta_{\nu}(z)$ at either fe or abe (we can choose!) in these coordinates.

For $M_{\nu}(z)$, let us use the expansion \eqref{eq:mfe} and convert it to the coordinates $(\hat\mu_{-},\hat w_{-})$. Before doing so we rewrite \eqref{eq:mfe} by using asymptotics of Airy functions \cite[9.8.20]{dlmf}:
\begin{equation}\label{eq:mfemod}
M_{\nu}(\nu+a\nu^{1/3})=\frac{2^{1/4}}{\nu^{1/3}\sqrt{\pi}}a^{-1/4}(1+O(a^{-3})) + O(\nu^{-1}).
\end{equation}
Now use the formulas
\[\nu=\hat\mu_-^{-3}\hat w_-^{-3};\quad a=\hat\mu_-^{-2}\frac{3-3\hat w_{-}^2+\hat w_-^4}{(1-\hat w_-^2)^{3}}.\]
We get
\begin{equation}\label{eq:mfe2}
M_{\nu}(\hat\mu_-^{-3}\hat w_-^{-3}(1-\hat w_-^2)^{-3})=\frac{2^{1/4}}{\sqrt{\pi}}\hat \mu_-^{3/2}(1+O(\hat\mu^6))\hat w_{-} + O(\hat w_{-}^{3}).
\end{equation}
The leading order of $M_{\nu}(z)$ at fe is 1 (as indicated by the single factor of $\hat w_-$) and at abe is 3/2. By the fact that our expansion is polyhomogeneous, the $O(\hat w_{-}^3)$ term also has leading order $3$ at abe and so it is bounded by $C\hat w_{-}^3\hat\mu_-^{3/2}$. The upshot is that
\[M_{\nu}(\hat\mu_-^{-3}\hat w_-^{-3}(1-\hat w_-^2)^{-3})-\frac{2^{1/4}}{\sqrt{\pi}}\hat \mu_-^{3/2}\hat w_{-} = O(\hat w_{-}\hat\mu_{-}^{15/2})+O(\hat w_{-}^3\hat\mu_{-}^{3/2}).\]
Now we go back to the original problem with its variable $\nu$. If $z=\nu+\nu^{1/2}$, then
\[\hat w_-=(\sqrt\nu+1)^{-1/2}=O(\nu^{-1/2});\quad \hat\mu_{-}=\nu^{-1/3}(\sqrt\nu + 1)^{1/2}=O(\nu^{-1/12}).\]
Thus
\[M_{\nu}(\nu+\sqrt{\nu})=\frac{2^{1/4}}{\sqrt{\pi}}(\nu^{-1/2}(\sqrt{\nu}+1)^{1/4}) + O(\nu^{-9/8}),\]
which becomes a two-term asymptotic expansion:
\[M_{\nu}(\nu+\sqrt{\nu})=\frac{2^{1/4}}{\sqrt{\pi}}\nu^{-3/8}(1+\frac 14\nu^{-1/2}) + O(\nu^{-9/8}).\]
More terms can be obtained by using more terms in the expansion of $M_{\nu}(z)$
\footnote{The same expression could have been found by using \eqref{eq:mabe} instead of \eqref{eq:mfe}, of course taking the square root of \eqref{eq:mabe} along the way.}..

A similar expansion can be found for $\theta_{\nu}(\nu+\sqrt{\nu})$ by using the same methods. In this case it is easier to use the expansion at abe, namely \eqref{eq:thetaabe}. And then the expansion of $J_{\nu}(\nu+\sqrt{\nu})$ can be analyzed by using the expansions for the modulus and phase. We do not include the details here. 

This type of calculation can be repeated to obtain expansions for $J_{\nu}(z)$ and $Y_{\nu}(z)$ in any asymptotic regime as $\nu$ and $z$ go to infinity. We give one further example -- a path approaching the intersection of oe and obe, in which none of the asymptotic expansions in \cite{dlmf} apply. Specifically, we consider the asymptotic behavior as $\nu\to\infty$ of
\[M_{\nu}(\sqrt{\nu}).\]
Here the $Y$-Bessel function dominates, so we are in essence analyzing $Y_{\nu}(\sqrt{\nu})$. From Theorem \ref{thm:mainmodulus}, we know that $M_{\nu}(z)$ equals $e^{-\sqrt{\nu^2-z^2}+\nu\cosh^{-1}(\nu/z)}$ times a polyhomogeneous function. As we later show in section~\ref{sec:modandinv}, the leading order term of that polyhomogeneous function at obe is
\[\sqrt{\frac{2}{\pi}}(\nu^2-z^2)^{-1/4}.\]
In the coordinates $(\zeta,\lambda)$ valid near obe$\cap$oe, this becomes
\[\sqrt{\frac{2}{\pi}}\zeta^{3/2}\lambda^{3/2}(1-\lambda^6)^{-1/4}.\]
Go back to the original variables by observing that along the path $z=\sqrt{\nu}$ we have $\zeta=\lambda=\nu^{-1/6}$. Since $\lambda\to 0$, the leading order behavior is just a multiple of $\nu^{-1/2}$. All in all,
\[M_{\nu}(\sqrt{\nu})=e^{-\sqrt{\nu^2-\nu}+\nu\cosh^{-1}(\sqrt{\nu})}(\sqrt{\frac{2}{\pi}}\nu^{-1/2}+o(\nu^{-1/2})).\]
Again, we can get more terms by taking more terms at obe.

\subsection{Applications to spectral theory on a disk} Consider the Dirichlet Laplacian $\Delta$ on the unit disk. By separation of variables, the eigenvalues of $\Delta$ are the squares of the positive zeroes of the Bessel functions $J_n(z)$, $n\in\mathbb Z$. Let $N_{\textrm{disk}}(\lambda)$ be the number of eigenvalues of $\Delta$ which are less than or equal to $\lambda^2$ (matching the notation of \cite{cdv}). It is a well-known result of Ivrii \cite{ivrii} that we have two-term Weyl asymptotics
\[N_{\textrm{disk}}(\lambda) = \frac 14\lambda^2 -\frac 12\lambda + R_{\textrm{disk}}(\lambda);\quad R_{\textrm{disk}}(\lambda)=o(\lambda).\]
Despite the two-term Weyl asymptotics, there are still open questions. A natural one is:
\begin{question} What is the best possible bound on $R(\lambda)$?
\end{question}
Another open question is a special case of a conjecture due to P\'olya:
\begin{question}\label{q:polya} Is it true that $N_{\textrm{disk}}(\lambda)\leq\frac 14\lambda^2$?
\end{question}
Although the Weyl asymptotics certainly imply that this is true for sufficiently large $\lambda$, it is not clear how large $\lambda$ must be or whether there are counterexamples below that threshhold. For a more complete discussion of P\'olya's conjecture, see \cite{laugesen}.

The problem of estimating $R_{\textrm{disk}}(\lambda)$ has been studied by comparing the eigenvalue counting function to a counting function for a lattice point problem, going back at least to the work of Kuznecov and Fedosov in the 1960s \cite{kufe}. Their results were later rediscovered by Colin de Verdi\`{e}re \cite{cdv}, with the same proof strategy. Following the notation of \cite{cdv}, let $D$ be the domain in $\mathbb R^2$ given by
\[D:=\{(x,y)\, |\, -1\leq x\leq 1,\, \max(0,-x)\leq y\leq \frac{1}{\pi}(\sqrt{1-x^2}-x\arccos x)\}.\]
Then define the counting function
\[N_{D}(\lambda):=\#\{(n,k)\in\mathbb Z^2\, | (n,k-\frac 14)\in \lambda D\}.\] 
In \cite{kufe} it is proved that
\[R_{\textrm{disk}}(\lambda) = O(\lambda^{2/3})\]
by showing that
\[|N_{\textrm{disk}}(\lambda)-N_D(\lambda)|=O(\lambda^{2/3}),\]
\begin{equation}\label{eq:latticeerror}
N_D(\lambda)=\frac 14\lambda^2-\frac 12\lambda + O(\lambda^{2/3}).
\end{equation}
The strategy has been improved more recently by Guo, Wang, and Wang \cite{gww} and then by those three authors together with M\"uller \cite{gmww}. They obtain better estimates on both the difference between the two counting functions and on the remainder in the lattice point problem. Their state of the art estimate is given in \cite{gmww} and is
\[R_{\textrm{disk}}(\lambda) = O(\lambda^{131/208}(\log\lambda)^{18627/8320}).\]
It should also be noted that Eswarathasan, Polterovich, and Toth have shown that the remainder is \emph{not} $o(\lambda^{1/2})$ \cite{ept}. The question of whether the remainder is $O(\lambda^s)$ for $s\in[1/2,131/208]$ remains open.

The relationship with the present work is that there is a simple way to write the eigenvalue counting function $N_{\textrm{disk}}$ in terms of the Bessel phase. Zeroes of the $J$-Bessel functions are precisely the points where $\theta_{n}(z)=\frac{-\pi}{2}+m\pi$, $m\in\mathbb N$. So introduce the function
\[A(z,\nu):=\frac{1}{\pi}(\theta_{\nu}(z)+\frac{\pi}{2}).\]
 The point is that
\begin{equation}\label{eq:counting}
N_{\textrm{disk}}(\lambda) = \sum_{n\in\mathbb Z}\lfloor A(\lambda,|n|)\rfloor.
\end{equation}
There is an analogous way to rewrite the counting function for the lattice point problem. Introduce the function
\[B(z,\nu):=\frac{1}{\pi}(\sqrt{z^2-\nu^2}-\nu\arccos\frac{\nu}{z})+\frac 14.\]
A direct calculation shows that
\begin{equation}\label{eq:latticecounting}
N_{D}(\lambda) = \sum_{n\in\mathbb Z}\lfloor B(\lambda,|n|)\rfloor.
\end{equation}
We can thus compare $N_{\textrm{disk}}$ and $N_D$ by comparing $A(z,\nu)$ and $B(z,\nu)$.
\begin{proposition} The function $B(z,\nu)-A(z,\nu)$ is non-negative. Moreover it is polyhomogeneous conormal on the portion of $O_a$ with $z>\nu$, with leading orders 3 at ae, 3 at abe, and 0 at fe.
\end{proposition}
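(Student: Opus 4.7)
The plan is to handle (i) polyhomogeneity of $B - A$, (ii) the leading orders at each face, and (iii) non-negativity in turn. Polyhomogeneity of $A = (\theta_\nu + \pi/2)/\pi$ is immediate from Theorem \ref{thm:mainalpha}, so the real content of (i) is to show that
\[
\phi(z,\nu) := \sqrt{z^2-\nu^2} - \nu\arccos(\nu/z)
\]
lifts to a polyhomogeneous conormal function on $U_a \cap \{z>\nu\}$; then $B = \phi/\pi + 1/4$ and $B - A$ are polyhomogeneous. This is a face-by-face local check in the projective coordinates of Section 2. In $(\zeta,\nu)$ at ae, $\phi = \zeta^{-3}\sqrt{1-\nu^2\zeta^6} - \nu\arccos(\nu\zeta^3)$ is smooth in $(\zeta,\nu)$ after factoring out $\zeta^{-3}$. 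In $(\mu,\eta)$ at abe, $\phi = \mu^{-3}(\sqrt{1-\eta^6}/\eta^3 - \arccos(\eta^3))$ is smooth for $0<\eta<1$. In the interior of fe with $(a,\mu)$, the identities $\sqrt{z^2-\nu^2} = \nu^{2/3}\sqrt{2a}\,\sqrt{1+a\mu^2/2}$ and $\arccos(\nu/z) = \sqrt{2u}\sum_k c_k u^k$, with $u = a\mu^2/(1+a\mu^2)$, exhibit each summand of $\phi$ as $\nu^{2/3}\sqrt{2a}$ times a function smooth in $(a,\mu)$; since we restrict to $a>0$, the $\sqrt a$ is a unit, and the coordinate $\hat w_- = \sqrt{1-\eta}$ handles the analogous analysis at fe$\cap$abe.

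For the leading orders, I would match against the expansions of Section 2. At ae, Taylor expanding $\phi$ in $\nu/z$ gives $\phi = z - \nu\pi/2 + \nu^2/(2z) + O(z^{-3})$, and subtracting \eqref{eq:thetaae} yields $\pi(B-A) = 1/(8z) + O(z^{-3})$, leading order $3$ in $\zeta$. At abe, \eqref{eq:thetaabe} with $n_0 = 0$ gives $\theta_\nu - \phi = -\pi/4 + O(\mu^3)$, so $\pi(B-A) = O(\mu^3)$ with explicit Debye coefficient and leading order $3$ in $\mu$. At fe, the leading $\mu^0$ term of $\theta_\nu$ read off from \eqref{eq:dlmf10198}--\eqref{eq:dlmf10198v2} is $T_0(a) = \arg(\Ai(-2^{1/3}a) - i\Bi(-2^{1/3}a))$ in the branch continuous with $T_0(0) = -\pi/3$, and a direct expansion around $z = \nu$ gives $\phi|_{\text{fe}}(a) = \tfrac{2\sqrt{2}}{3}a^{3/2}$. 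Hence $\pi(B-A)|_{\text{fe}}(a) = \tfrac{2\sqrt{2}}{3}a^{3/2} - T_0(a) - \pi/4$; this equals $\pi/12$ at $a = 0$, so the leading order in $\mu$ is $0$.

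Non-negativity follows from a monotonicity argument at fixed $\nu$. Differentiating \eqref{eq:phaseformula} gives $\theta'_\nu(z) = 2/(\pi z M_\nu^2(z))$, and a direct calculation gives $\phi'_z(z,\nu) = \sqrt{z^2-\nu^2}/z$, so
\[
\pi(B - A)'_z = \frac{1}{z}\Bigl(\sqrt{z^2-\nu^2} - \frac{2}{\pi M_\nu^2(z)}\Bigr).
\]
The classical Bessel-modulus inequality $M_\nu^2(z) \leq 2/(\pi\sqrt{z^2-\nu^2})$ for $z > \nu$, which appears in Watson's treatise \cite{watson} and in \cite{dlmf}, makes the right-hand side non-positive; thus $B - A$ is non-increasing in $z$ at fixed $\nu$. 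Since $(B-A)(z,\nu) \to 0$ as $z \to \infty$ by the ae leading order, this forces $B - A \geq 0$ on all of $U_a \cap \{z>\nu\}$. The main technical obstacle is invoking the classical inequality uniformly in the relevant range of $\nu$ (and in the $\nu < 1/2$ corner, if needed, verifying it by a separate argument), together with carefully tracking the branch of $T_0$ at fe.
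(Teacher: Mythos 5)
Your argument follows the paper's outline closely. For non-negativity, both you and the paper reduce the claim to $\lim_{z\to\infty}(B-A)=0$ together with $\partial_z(B-A)\leq 0$, and both identify the latter with the inequality $\tfrac{\pi}{2}M_\nu^2(z)\leq(z^2-\nu^2)^{-1/2}$. For polyhomogeneity you correctly observe that $A$ is handled by Theorem~\ref{thm:mainalpha}, so the content is polyhomogeneity of $\phi=\sqrt{z^2-\nu^2}-\nu\arccos(\nu/z)$; your face-by-face expansions are correct, and your leading-order computations (the coefficient $1/(8\pi z)$ at ae, the vanishing of the $\mu^0$ term at abe, the value $\phi|_{\mathrm{fe}}=\tfrac{2\sqrt 2}{3}a^{3/2}$, and $\pi(B-A)|_{\mathrm{fe}}(0)=\pi/12$) are all right and in fact more explicit than what the paper records, which only remarks that the leading orders follow from comparing \eqref{eq:bznu} with \eqref{eq:thetaabe}.

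The one genuine gap is your treatment of the modulus inequality $M_\nu^2(z)\leq\tfrac{2}{\pi}(z^2-\nu^2)^{-1/2}$, which you invoke as ``classical'' and attribute to Watson and the DLMF. The paper does not take this as given: it proves it from Nicholson's formula \eqref{eq:nicholson} together with the monotonicity of $K_0$ (so $K_0(2z\sinh t)\leq K_0(2zt)$), giving
\[
\frac{\pi}{2}M_\nu^2(z)\;\leq\;\frac{4}{\pi}\int_0^\infty K_0(2zt)\cosh(2\nu t)\,dt\;=\;(z^2-\nu^2)^{-1/2}
\]
by \cite[6.661.2]{gr}, and it cites \cite{horsley} only as an \emph{alternative} proof. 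Your own closing caveat about the $\nu<1/2$ range suggests you were not confident the cited references cover the full regime; the short Nicholson argument closes this cleanly and works for all $\nu\geq 0$, so you should either give it or locate a precise reference rather than leave the step as a placeholder.
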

\noindent Note that the statement about leading orders implies that $B(z,\nu)-A(z,\nu)=O(\rho_{ae}^3\rho_{abe}^3)$. Writing this out, we see that whenever $z>\nu$,
\begin{equation}
0\leq B(z,\nu)-A(z,\nu)\leq Cz^{-1}(1-(\frac\nu z)^{1/3})^{3/2},
\end{equation}
which is a potentially useful estimate in its own right. 

Now let us prove the proposition.
\begin{proof} For non-negativity, a direct calculation shows that for each fixed $\nu$,
\[\lim_{z\to\infty}(B(z,\nu)-A(z,\nu))=0.\]
So it is enough to prove that $\frac{\partial}{\partial z}(B(z,\nu)-A(z,\nu))$ is non-\emph{positive}, which means proving that
\[\frac{1}{\pi}\frac{\sqrt{z^2-\nu^2}}{z}\leq\frac{2}{\pi^2zM_{\nu}^2(z)},\]
which is equivalent to showing that
\begin{equation}\label{eq:thisisinhorsley}
\frac{\pi}{2}M_{\nu}^2(z)\leq\frac{1}{\sqrt{z^2-\nu^2}}.
\end{equation}
By Nicholson's formula and the fact that $K_0$ is a decreasing function, the left-hand side satisfies
\[\frac{\pi}{2}M_{\nu}^2(z)\leq\frac{4}{\pi}\int_0^{\infty}K_0(2zt)\cosh(2\nu t)\, dt.\]
But the right-hand side can be evaluated using \cite[6.661.2]{gr} and gives precisely $(z^2-\nu^2)^{-1/2}$, proving non-negativity. An alternative proof of \eqref{eq:thisisinhorsley} may be found in \cite{horsley}.

Polyhomogeneity follows from proving polyhomogenity of $B(z,\nu)$. Near the intersection of abe and ae, $B(z,\nu)$ lifts to
\begin{equation}\label{eq:bznu}
\frac{1}{\pi}\mu^{-3}\eta^{-3}(\sqrt{1-\eta^6}-\eta^3\arccos(\eta^3))+\frac 14,
\end{equation}
which is polyhomogeneous. Near the intersection of abe and fe, some Taylor expansions show that $B(z,\nu)-\frac 14$ lifts to
\[\frac{4\sqrt{3}}{\pi\sqrt{2}}\hat\mu_{-}^{-3}f(\hat w_{-}),\]
where $f(\hat w_{-})$ is smooth down to $\hat w_-=0$ and has value 1 at $w_-=0$. This is polyhomogeneous. The statements about the leading orders follow from comparing the asymptotic expansions at abe and ae, in particular comparing \eqref{eq:bznu} and \eqref{eq:thetaabe}
\end{proof}
An immediate corollary which follows from \eqref{eq:counting} and \eqref{eq:latticecounting} is
\begin{corollary} $N_{\textrm{disk}}(\lambda)\leq N_{D}(\lambda)$.
\end{corollary}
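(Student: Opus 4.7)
The plan is to apply the just-established proposition directly to the identities \eqref{eq:counting} and \eqref{eq:latticecounting}. Both counting functions are expressed as sums over $n \in \mathbb{Z}$ of floor values of $A(\lambda,|n|)$ and $B(\lambda,|n|)$ respectively, so it suffices to establish the termwise inequality $\lfloor A(\lambda,|n|)\rfloor \leq \lfloor B(\lambda,|n|)\rfloor$ for every $n$ with $|n|<\lambda$, and to check that the remaining terms contribute $0$ on both sides.

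For $|n|<\lambda$ we are in the regime $z>\nu$ covered by the proposition, which gives $B(\lambda,|n|)\geq A(\lambda,|n|)$; monotonicity of the floor function then supplies the desired termwise comparison. For $|n|\geq\lambda$, I would argue that both terms vanish: the first positive zero $j_{|n|,1}$ of $J_{|n|}$ exceeds $|n|\geq\lambda$, so the $n$-th angular mode contributes no eigenvalue of $\Delta$ at most $\lambda^2$, and hence $\lfloor A(\lambda,|n|)\rfloor$ contributes $0$ to \eqref{eq:counting}; similarly $(n, k-\tfrac{1}{4})\notin \lambda D$ for any $k$, since the domain $D$ is contained in $\{|x|\leq 1\}$, so these terms contribute $0$ to \eqref{eq:latticecounting} as well. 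Summing the termwise inequalities over all $n\in\mathbb{Z}$ then yields $N_{\textrm{disk}}(\lambda)\leq N_D(\lambda)$.

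There is essentially no obstacle here: the corollary is a routine consequence of the proposition, with all of the substantive analytic content---in particular the Nicholson-based inequality \eqref{eq:thisisinhorsley}---already dispatched in the proof of the proposition. The only bookkeeping concerns the edge terms where $|n|$ is near $\lambda$, but this is handled straightforwardly by the vanishing argument above, so the proof should be quite short.
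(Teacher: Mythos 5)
Your proof is correct and matches the paper's intent: the paper states the corollary as "immediate" from \eqref{eq:counting}, \eqref{eq:latticecounting}, and the preceding proposition, which is precisely the termwise comparison $\lfloor A(\lambda,|n|)\rfloor\leq\lfloor B(\lambda,|n|)\rfloor$ you carry out, using $B\geq A$ on $\{z>\nu\}$ and checking that the terms with $|n|\geq\lambda$ vanish on both sides. One small tightening worth making explicit in the $|n|\geq\lambda$ case: since $\theta_\nu$ is increasing in $z$ with $\theta_\nu(0^+)=-\pi/2$, one has $A\geq 0$ everywhere, and $j_{|n|,1}>|n|\geq\lambda$ gives $A(\lambda,|n|)<1$, so $\lfloor A(\lambda,|n|)\rfloor=0$ directly rather than by appeal to what \eqref{eq:counting} "should" count; this removes the slight circularity in your phrasing, but the conclusion is the same.
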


In a forthcoming paper with M. Levitin and I. Polterovich \cite{lps22}, we use this observation to prove P\'olya's conjecture for a disk, Question~\ref{q:polya}. In particular, the error $O(\lambda^{2/3})$ from \eqref{eq:latticeerror} can be quantified -- that is, bounded by $p(\lambda)$ where $p(\lambda)$ is an explicit function of $\lambda$ of order $O(\lambda^{2/3})$. Thus
\[N_{\textrm{disk}}(\lambda)\leq N_{D}(\lambda)\leq\frac 14\lambda^2-\frac 12\lambda + p(\lambda),\]
which implies that P\'olya's conjecture is true for all sufficiently large $\lambda$. We then use a computer-assisted approach to check P\'olya's conjecture for all smaller $\lambda$, thus completing the proof. We also follow a similar program to prove the version of P\'olya's conjecture for a disk with Neumann boundary conditions \cite{lps22}.


\section{Asymptotics of the modulus-squared}

Our starting point is the analysis of $M_{\nu}^2(z)$. By Nicholson's formula \cite[6.664.4]{gr}, for all $z>0$ and all $\nu$,
\begin{equation}\label{eq:nicholsonv2}
M_{\nu}^2(z)=\frac{8}{\pi^2}\int_0^{\infty}K_0(2z\sinh t)\cosh(2\nu t)\, dt.
\end{equation}
We analyze the behavior of this for large $z$ and large $\nu$. The goal of this section is to prove the following theorem:

\begin{theorem}\label{thm:modsquared} The modulus-squared $M_{\nu}^2(z)$ lifts to a function which is polyhomogeneous conormal on $U_a$. 
\end{theorem}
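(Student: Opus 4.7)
The plan is to apply Melrose's pushforward theorem to Nicholson's integral \eqref{eq:nicholsonv2}. Concretely, I would construct a resolved manifold with corners $\tilde U_a$ equipped with a b-fibration $\pi:\tilde U_a\to U_a$, lift the integrand $K_0(2z\sinh t)\cosh(2\nu t)$ to $\tilde U_a$, show it is polyhomogeneous conormal there with appropriate decay at the face corresponding to $t=\infty$, and then conclude by pushforward that $M_\nu^2(z)$ is polyhomogeneous conormal on $U_a$.

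The first step is to analyse a relative of $K_0$. The classical series at $x=0$ shows that $K_0$ is polyhomogeneous there with index set $\mathbb N_0\times\{0,1\}$ (the logarithm coming from $-\log(x/2)I_0(x)$). At infinity $K_0(x)\sim\sqrt{\pi/(2x)}e^{-x}(1+O(x^{-1}))$, so the exponential must be extracted: writing $K_0(x)=e^{-x}\tilde K_0(x)$, I would verify that $\tilde K_0$ is polyhomogeneous conormal on the compactification $[0,\infty]_x$, with leading order $\tfrac12$ at $x=\infty$. Since $\cosh(2\nu t)=\tfrac12(e^{2\nu t}+e^{-2\nu t})$, splitting the integrand into two pieces isolates a harmless term carrying $e^{-2z\sinh t-2\nu t}$ and a single critical term governed by
\[\Phi(t,z,\nu):=e^{-2z\sinh t+2\nu t}.\]

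Next I would build the resolution over $[0,\infty]_t\times U_a$. The phase $-2z\sinh t+2\nu t$ has derivative $2(\nu-z\cosh t)$, which vanishes at $\cosh t=\nu/z$. On $U_a$ this critical point exists (at small positive $t$) only where $\nu/z>1$, that is on the $z<\nu$ portion of fe; it disappears on abe and ae, and degenerates to $t=0$ along fe when $z=\nu$. This is precisely the Airy transition. Rescaling $\tau=z^{1/3}t$ yields
\[-2z\sinh t+2\nu t=-2a\tau-\tfrac13\tau^3+O(z^{-2/3}\tau^5),\qquad a=(z-\nu)/z^{1/3},\]
the Airy phase in a coordinate smooth on $Q$ near fe. Guided by this, I would resolve $\{t=0\}\cap\partial U_a$ via blow-ups that introduce $\tau=z^{1/3}t$ as a projective coordinate near fe while interpolating smoothly to Laplace-type scalings near abe and to the natural scale at ae, and similarly resolve $\{t=\infty\}$ using $e^{-t}$. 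After these blow-ups both $\Phi$ and $\tilde K_0(2z\sinh t)$ lift to polyhomogeneous functions on $\tilde U_a$, hence so does the integrand, and the projection $\pi$ remains a b-fibration. Melrose's pushforward theorem then delivers polyhomogeneity of $M_\nu^2(z)$ on $U_a$.

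The main obstacle is precisely the construction of $\tilde U_a$: a single resolved space must simultaneously accommodate the Laplace concentration of $\Phi$ at $t=0$ on scale $(z-\nu)^{-1}$ at abe, the Airy concentration on scale $z^{-1/3}$ at fe (with the cubic correction essential, not an error term), and the scaling at ae where $\nu$ becomes negligible compared to $z$, all while remaining a b-fibration onto $U_a$ and while the face at $t=\infty$ is configured so that the residual exponential cancellations between $K_0(2z\sinh t)$ and $\cosh(2\nu t)$ produce polyhomogeneous rather than merely rapidly decaying behavior. Once this geometric setup is in place, checking polyhomogeneity of the lifted integrand and computing the index sets of the pushforward reduces to the combinatorial manipulations of the kind collected in the paper's appendices.
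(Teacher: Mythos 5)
Your strategy is the same as the paper's: extract the exponential from $K_0$ to get a polyhomogeneous factor (your $\tilde K_0$ is the paper's $G(x)=e^xK_0(x)$), split $\cosh(2\nu t)$ to isolate the critical exponential $\Phi=e^{-2z\sinh t+2\nu t}$ from a harmless decaying term, construct a blown-up space over $[0,\infty]_t\times U_a$ on which the integrand is polyhomogeneous, and apply Melrose's pushforward theorem. You also correctly identify the two relevant $t$-scales (the $z^{-1}$ Laplace scale for $K_0$ and the $\nu^{-1/3}$ Airy scale for $\Phi$), and the fact that the critical point structure changes across fe. The paper realizes this via a six-step chain of ordinary and quasihomogeneous blow-ups, using $(\mu,\zeta)$-projective coordinates rather than your $\tau=z^{1/3}t$; these are equivalent near fe.

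There is one genuine gap beyond the acknowledged ``construct the resolution'' work. You assert that after suitable blow-ups $\Phi$ ``lifts to a polyhomogeneous function,'' but exponentials of polyhomogeneous functions are \emph{not} automatically polyhomogeneous, no matter how carefully one resolves. Even when the exponent $h(z,\nu,t)=2z\sinh t-2\nu t$ is a perfectly polyhomogeneous conormal function on the resolved space, $e^{-h}$ may fail to be. The paper needs a dedicated criterion (Theorem~\ref{thm:exppc}): at each boundary hypersurface one must check either that $h$ is bounded and continuous up to that face (``type 1''), so one can Taylor-expand the exponential, or that $h$ blows up at a definite positive rate (``type 2''), in which case $e^{-h}$ has empty index set there. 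Verifying this dichotomy face by face in the resolved space is a substantial part of the argument (and is precisely what forces the restriction to $U_a$, since near oe and obe the exponent $h$ goes to $-\infty$ and $e^{-h}$ grows). Your proposal treats this step as following for free from the geometry, which it does not; without a result of the form of Theorem~\ref{thm:exppc}, the argument does not close.
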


The first thing we must understand is the behavior of the Bessel function $K_0(x)$. We have well-known asymptotic expansions for $K_0(x)$ at $x=0$ and at $x=\infty$. The following lemma shows that these expansions are polyhomogeneous conormal and illuminates the structure of the expansion at $x=\infty$. We define
\[G(x):=e^xK_0(x).\]
\begin{lemma} $G(x)$ has a polyhomogeneous conormal expansion at $x=0$, and also at $x=\infty$ in the variable $x^{-1}$. At $x=0$ this expansion has leading term $-\log x$. At $x=\infty$ it has only odd half-integer powers of $x^{-1}$, with leading term $\sqrt{\pi/2}x^{-1/2}$.
\end{lemma}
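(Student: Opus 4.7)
The plan is to handle the two boundary points $x=0$ and $x=\infty$ separately, using two different classical representations of $K_0$: its explicit power-and-log series at the origin, and its exponential integral representation at infinity.

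\textbf{Near $x=0$.} I would use the standard series (cf.\ \cite[10.31.2]{dlmf})
\[ K_0(x) = -\bigl(\log(x/2) + \gamma\bigr) I_0(x) + S(x), \]
where $I_0(x) = \sum_{k \geq 0} (x/2)^{2k}/(k!)^2$ and $S(x) = \sum_{k\ge 1} h_k (x/2)^{2k}/(k!)^2$ are entire functions of $x^2$, with $I_0(0)=1$ and $h_k = 1 + \tfrac{1}{2} + \cdots + \tfrac{1}{k}$. Multiplying by $e^x$ (analytic), one obtains
\[ G(x) = -e^x I_0(x)\,\log x + \bigl(\log 2 - \gamma\bigr) e^x I_0(x) + e^x S(x). \]
The first summand is an analytic function times $\log x$, and the rest is analytic; hence $G$ is polyhomogeneous conormal at $x=0$ with index set contained in $\{(k,0), (k,1) : k \in \mathbb{N}_0\}$, and the leading term is $-\log x$ since $I_0(0)=1$.

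\textbf{Near $x=\infty$.} Starting from the integral representation $K_0(x) = \int_0^\infty e^{-x\cosh t}\,dt$ and substituting $s = \cosh t - 1$ (so that $dt = ds/\sqrt{s(s+2)}$), I would rewrite
\[ G(x) = e^x K_0(x) = \int_0^\infty e^{-xs}\, s^{-1/2}\,h(s)\,ds, \qquad h(s) := (s+2)^{-1/2}. \]
Here $h$ is smooth on $[0,\infty)$ and decays there. Watson's lemma then applies: expanding $h$ in a Taylor series at $s=0$ and integrating against $e^{-xs}$ term by term with an explicit remainder, one gets
\[ G(x) \sim \sum_{k=0}^{\infty} c_k\,\Gamma(k+\tfrac{1}{2})\,x^{-k-1/2}, \qquad c_k := 2^{-k-1/2}\binom{-1/2}{k}, \]
an expansion in half-integer powers of $x^{-1}$ of the form $x^{-k-1/2}$ for $k \ge 0$, i.e.\ exactly the odd half-integer powers of $x^{-1}$. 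The $k=0$ term is $\Gamma(\tfrac{1}{2})\cdot 2^{-1/2}\,x^{-1/2} = \sqrt{\pi/2}\,x^{-1/2}$, matching the stated leading behavior.

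\textbf{Polyhomogeneity refinement.} The step above yields an ordinary asymptotic expansion, but polyhomogeneous conormality requires in addition that the expansion survive differentiation by the b-vector field $x\,\partial_x$ (equivalently $-y\,\partial_y$ in $y=x^{-1}$). I expect this to be the main technical point. The cleanest route is to apply $(x\,\partial_x)^j$ to $G$ under the integral sign, producing factors of $(-xs)^j$, and then integrate by parts $j$ times in $s$; the boundary terms vanish because of the $e^{-xs}$ factor, and the result is again a Watson-type integral of the form $\int_0^\infty e^{-xs} s^{-1/2} \tilde{h}_j(s)\,ds$ with a new smooth factor $\tilde h_j$ built from $h$ and its derivatives. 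Applying Watson's lemma to each such integral, with the standard pointwise remainder bound, gives the same error estimate at every level of b-differentiation. Near $x=0$ the analogous statement is immediate from the analyticity of $I_0$, $S$, and $e^x$. Together these establish polyhomogeneity at both ends and complete the proof.
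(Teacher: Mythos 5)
Your proof is correct, and it takes a genuinely different route from the paper's. The paper applies the integral representation $G(x)=\int_0^\infty e^{-x(\cosh t -1)}\,dt$, splits the range at $t=1$, performs explicit (parabolic and regular) blow-ups on the product space in $(x,t)$, and then invokes Melrose's pushforward theorem to conclude polyhomogeneity at both ends in one stroke; for the explicit exponents and leading coefficients it simply appeals to the known asymptotics, since the pushforward theorem guarantees they are conormal without recomputation. Your argument instead handles the two ends by separate classical devices: the logarithmic series for $K_0$ at the origin (making polyhomogeneity there essentially trivial), and a change of variables $s=\cosh t -1$ that turns $G$ into a Laplace transform $\int_0^\infty e^{-xs}s^{-1/2}(s+2)^{-1/2}\,ds$ where Watson's lemma gives the expansion with explicit coefficients. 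The trade-off is that Watson's lemma by itself gives only an ordinary asymptotic expansion, so you must supply the conormality by hand — which you do by an induction on one application of $x\partial_x$ at a time (note that $(x\partial_x)^j e^{-xs}$ is a polynomial $p_j(xs)e^{-xs}$, not $(-xs)^j e^{-xs}$, so it is cleaner to integrate by parts once per b-derivative as your last paragraph effectively does: $(x\partial_x)G = \int_0^\infty e^{-xs}s^{-1/2}\tilde h_1(s)\,ds$ with $\tilde h_1 = \tfrac12 h + s h' = (s+2)^{-3/2}$, and so on inductively). What your approach buys is elementary self-containedness and explicit coefficients directly from the computation; what the paper's approach buys is uniformity with the rest of the argument, since the same blow-up and pushforward machinery is reused repeatedly later in the paper, and conormality comes for free from the general theorem rather than needing a separate integration-by-parts verification.
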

\begin{proof} By \cite[10.32.8]{dlmf},
\begin{equation}\label{eq:intforko}
e^xK_0(x)=\int_0^{\infty}e^{-x(\cosh t-1)}\, dt=\int_0^{1}e^{-x(\cosh t-1)}\, dt + \int_1^{\infty}e^{-x(\cosh t-1)}\, dt.
\end{equation}
We analyze each of the two integrals in \eqref{eq:intforko} separately.

For the first integral, the $x\to 0$ behavior is simple: the integrand is jointly smooth in $x$ and $t$ and so the integral is smooth as $x\to 0$. The $x\to\infty$ behavior is more interesting. Observe that $\cosh t - 1 = t^2f(t)$, where $f(t)$ is smooth, even, and has $f(0)=1$. If we let $\hat x=x^{-1}$ then our integral becomes
\[\int_0^1 e^{-\frac{t^2}{\hat x}f(t)}\, dt.\]
The integrand is not a smooth function of $t$ and $\hat x$. However, it is polyhomogeneous as soon as one performs a parabolic blow-up at $\{\hat x=t=0\}$. Integration in $t$ is then a b-fibration onto $[0,1)_{\hat x}$, which means that our first integral is polyhomogeneous in $\hat x=x^{-1}$ as desired.

For the second integral in \eqref{eq:intforko} we change variables to $r=e^{-t}$ and get
\[\int_0^{1/e} e^{-x(\frac r2+\frac 1{2r}-1)}\,\frac{dr}{r}.\]
First consider the regime when $x\to 0$. In this regime, we write the integral as
\[\int_0^{1/e} e^{-x(\frac r2-1)} e^{-\frac{x}{2r}}\,\frac{dr}{r}.\]
The integrand is not smooth in $x$ and $r$. However, it is polyhomogeneous as soon as we perform a (regular) blow-up at $\{x=r=0\}$, and integration is again a b-fibration onto $[0,1)_x$, giving the polyhomogeneous expansion we want as $x\to 0$. 

For the third integral, as $x\to\infty$, the integrand $e^{-x(\cosh t-1)}$ is actually a smooth function of $x^{-1}$ and $t^{-1}$, decaying to infinite order both as $x^{-1}\to 0$ and as $t^{-1}\to 0$. This yields a trivial polyhomogeneous expansion as $x\to\infty$, completing the proof. 

It is of course possible to read off the complete asymptotic expansions themselves from the pushforward theorem. However, the asymptotics are already well-known, so there is no need; we have now proved that these previously-known expansions are polyhomogeneous conormal. \end{proof}

We now rewrite Nicholson's formula as follows:
\begin{equation}\label{eq:nicholsonmod}
M_{\nu}^2(z)=\frac{4}{\pi^2}\int_0^{\infty}(1+e^{-4\nu t})G(2z\sinh t)e^{-h(z,\nu,t)}\, dt,
\end{equation}
where, now and throughout the manuscript, we write
\begin{equation}\label{eq:expshort}
h(z,\nu,t):=2z\sinh t - 2\nu t.
\end{equation}
It turns out to be easiest to break this into two pieces:
\begin{equation}\label{eq:nicholsonmodbroken}
M_{\nu}^2(z)=\widetilde M_{\nu}^2(z)+R_{\nu}^2(z),
\end{equation}
where
\begin{equation}\label{eq:nicholsonmainterm}
\widetilde M_{\nu}^2(z)=\frac{4}{\pi^2}\int_0^{\infty}G(2z\sinh t)e^{-h(z,\nu,t)}\, dt,
\end{equation}
\begin{equation}\label{eq:nicholsonremainder}
R_{\nu}^2(z)=\frac{4}{\pi^2}\int_0^{\infty}G(2z\sinh t)e^{-2z\sinh t - 2\nu t}\, dt.
\end{equation}
As indicated by the notation, $\widetilde M_{\nu}^2(z)$ is the main term and $R_{\nu}^2(z)$ is a remainder. For the moment we focus on the main term.

There are multiple scales relevant to the asymptotic analysis of \eqref{eq:nicholsonmainterm}. On the one hand, the first term has nontrivial asymptotic behavior at the scale $t\sim z^{-1}$. On the other hand, \eqref{eq:expshort} may be rewritten as
\[-2(\nu-z)\sinh t + 2\nu(\sinh t - t)\sim 2(\nu - z)t + \frac 13\nu t^3.\]
This has nontrivial asymptotic behavior at the scale $t\sim\frac{1}{v-z}\sim \nu^{-1/3}$. Indeed, this accounts for the appearance in the classical asymptotics \cite[10.19.8]{dlmf} of the variable $(\nu-z)/\nu^{1/3}$.

The multi-scale nature of this problem is reflected in the blow-up analysis by the fact that two different collections of blow-ups are needed to deal with the exponential term and with the remainder of the expression. When constructing a blown-up space on which the integrand in \eqref{eq:nicholsonmainterm} is nice, both sets of blow-ups must be present. Which order to do them in is more or less a matter of taste. However, we will proceed by first dealing with the exponential term and then with the remainder of the integrand. The exponential term has a $\nu^{-1/3}$ scale, so we define
\[\zeta:= z^{-1/3},\quad \mu=\nu^{-1/3}.\]
We now consider the integrand in \eqref{eq:nicholsonmainterm} as a function on the interior of a manifold with corners
\[X:= [0,\infty]_t \times [0,1)_{\zeta}\times [0,1)_{\mu}.\]
For a boundary defining function at $t=\infty$ we make the unusual choice to set
\[\tau:=e^{-t/6}.\] 
This exponential scale turns out to be more relevant to the analysis than a non-exponential scale would be. 
Note that the boundaries $\zeta=1$ and $\mu=1$ are arbitrary -- any value in $(0,\infty)$ may be chosen for each -- and do not play a significant role in the analysis. Finally, let
\[f_1(t):=\frac{\sinh t}{t};\quad f_2(t)=6\frac{\sinh t - t}{t^3},\]
and observe that both $f_1(t)$ and $f_2(t)$ are smooth, even functions of $t$ with $f_1(0)=f_2(0)=1$. With this notation we may write \eqref{eq:expshort} in either of the following two ways:
\begin{equation}\label{eq:expshort2}
h(z,\nu,t)=(2z-2\nu)tf_1(t)+\frac{\nu t^3}{3}f_2(t)=-2t(\mu^{-3}-\zeta^{-3})f_1(t) + \frac {t^3}3\mu^{-3}f_2(t).
\end{equation}
Recall that the integrand in \eqref{eq:nicholsonmainterm} is
\begin{equation}\label{eq:intnew}
G(2\zeta^{-3}tf_1(t))e^{-(-2t(\mu^{-3}-\zeta^{-3})f_1(t) + \frac {t^3}3\mu^{-3}f_2(t))}.
\end{equation}
It is now evident that, as expected, the two scales that are relevant are $t\sim\mu^{3}\sim\zeta^3$ (for both terms) and $t\sim\mu$ (for the exponential term).

Now we start performing a sequence of blow-ups on $X$, giving local projective coordinates as we do so. The goal of these blow-ups is to make the integrand in \eqref{eq:nicholsonmainterm} polyhomogeneous.  Begin by labeling the boundary hypersurfaces of $X$: tf for $t=0$, af for $\zeta=0$ (large values of the argument), of for $\mu=0$ (large values of the order), and tif for $t=\infty$ (that is, $\tau=0$). 

\begin{remark} Even without any blow-ups, \eqref{eq:expshort} is polyhomogeneous on $X$, with leading orders $-3$ at of and af, $1$ at tf, and $-6$ at tif, the latter of which can be seen by writing \eqref{eq:expshort} in the coordinates $(\mu,\zeta,\tau)$:
\[h(\mu,\zeta,\tau)=\frac{1}{\zeta^3\tau^6}(1-\tau^{12})  + \frac{12\log\tau}{\mu^3}.\]
The problem is that its negative exponential is not (to say nothing of the other term in \eqref{eq:nicholsonmainterm}). \end{remark}

Our \textbf{first blow-up} is of of$\cap$af, creating a new front face bf and a new manifold with corners which we call $X_1$. Introduce the following projective coordinates:
\begin{equation}\label{eq:introcoords1}
\eta:=\frac{\zeta}{\mu};\quad \lambda:=\frac{\mu}{\zeta};\quad w:=\eta-1.
\end{equation}
Then near bf$\cap$of, we may use the projective coordinates $(\zeta,\lambda,t)$ and near bf$\cap$af, we may use $(\mu,\eta,t)$. The coordinate $w$ is a boundary defining function for the lift of the diagonal $\{\zeta=\mu\}$ in $X_1$, and so near the diagonal we may use the coordinates $(w,\mu,t)$. Near the diagonal the exponent $h$ from \eqref{eq:expshort} is
\begin{equation}\label{eq:exp1}
-\frac{6wt}{\mu^3}f_1(t)f_3(w) +\frac{t^3}{3\mu^3}f_2(t),
\end{equation}
where
\[f_3(w):=\frac{3+3w+w^2}{3(w+1)^3}.\]
'Note that $f_3(w)$ is a smooth function of $w$ away from of and af, and that $f_3(0)=1$.

Our \textbf{second blow-up} is of bf$\cap$tf, creating a face which we call bif and a new mwc $X_2$. This face represents the behavior at the order $t\sim\nu^{-1/3}$. We introduce some more projective coordinates:
\begin{equation}\label{eq:introcoords2}
s:=\frac{t}{\mu};\quad r:=\frac{t}{\zeta};\quad \kappa:=\frac{\mu}{t};\quad \theta:=\frac{\zeta}{t}.
\end{equation}
There are various different regimes now: 
\begin{itemize}
\item Near bf$\cap$bif$\cap$of we can use $(\theta,\lambda,t)$;
\item Near bif$\cap$tf$\cap$of, $(\zeta,\lambda,r)$;
\item Near bf$\cap$bif$\cap$af, $(\kappa,\eta,t)$;
\item Near bif$\cap$tf$\cap$af, $(\mu,\eta,s)$.
\end{itemize}
Near $D\cap$bf$\cap$bif we can use $w$ as a replacement for $\eta$ (it has the advantage of being a defining function for the diagonal) and so we use $(\kappa,w,t)$; near $D\cap$bif$\cap$tf we similarly use $(\mu,w,s)$. In these two coordinate systems \eqref{eq:exp1} becomes

\begin{equation}\label{eq:exp2}
-\frac{6w}{t^2\kappa^3}f_1(t)f_3(w) +\frac{1}{3\kappa^3}f_2(t)
\end{equation}
in the former and

\begin{equation}\label{eq:exp3}
-\frac{6ws}{\mu^2}f_1(s\mu)f_3(w) +\frac{s^3}{3}f_2(s\mu)
\end{equation}
in the latter. 

Our \textbf{third blow-up} is of bif$\cap$tf, and this blow-up is quasihomogeneous, quadratic with respect to tf. In the diagonal coordinates $(\mu,w,s)$, this is a blow-up of $\{w=\mu=0\}$ and introduces the new boundary defining function $\sqrt{|w|+\mu^2}$. Call the new front face bf$_0$ and the new manifold with corners $X_3$. The lift of $h(\mu,\zeta,t)$ is polyhomogeneous on $X_3$ with leading order $-3$ at af, of, and bf, leading order $-2$ at bif, leading order 0 at bf$_0$, leading order 1 at tf, and leading order $-6$ at tif. We introduce yet more projective coordinates:
\begin{equation}\label{eq:introcoords3}
s':=\frac{s}{\mu^2}=\frac{t}{\mu^3},\quad \kappa'=\frac{\mu}{\sqrt s}=\frac{\mu^{3/2}}{\sqrt t},\quad r':=\frac{r}{\zeta^2}:=\frac{t}{\zeta^3},\quad \theta':=\frac{\zeta}{\sqrt r}=\frac{\zeta^{3/2}}{\sqrt t}.
\end{equation}
New coordinate regimes are as follows. Near bf$\cap$bif$\cap$of and bf$\cap$bif$\cap$af we can use $(\theta,\lambda,t)$ and $(\kappa,\eta,t)$ as before. For the other regimes:
\begin{itemize}
\item Near bif$\cap$bf$_0\cap$of we use $(\theta',\lambda,\sqrt r)$;
\item Near bf$_0\cap$tf$\cap$of, $(\zeta,\lambda,r')$;
\item Near bif$\cap$bf$_0\cap$af, $(\kappa',\eta,\sqrt s)$;
\item Near bf$_0\cap$tf$\cap$af, $(\mu,\eta,s')$.
\end{itemize}
As before, near the diagonal, we can always replace $\eta$ with $w$. The space $X_3$ (away from tif) and local projective coordinates on it are illustrated in Figure \ref{fig:x3}.

\begin{figure}
\centering
\begin{tikzpicture}
\draw[black,thick,->](-1,1)--(-1,3);
\draw[black,thick,->](1,1)--(1,3);
\draw[black,thick,->](-2,-1)--(-3.5,-2);
\draw[black,thick,->](2,-1)--(3.5,-2);
\draw[thick](0,0.9) arc (270:281.6:5);
\draw[thick](0,0.9) arc (270:258.4:5);
\draw[thick](0,-1.4) arc (270:291.75:5.4);
\draw[thick](0,-1.4) arc (270:248.25:5.4);
\draw[thick](-1.5,0) .. controls (-1.5,0.5) and (-1.25,0.9) .. (-1,1);
\draw[thick](-1.5,0) .. controls (-1.75,-0.1) and (-2,-0.5) .. (-2,-1);
\draw[thick](1.5,0) .. controls (1.5,0.5) and (1.25,0.9) .. (1,1);
\draw[thick](1.5,0) .. controls (1.75,-0.1) and (2,-0.5) .. (2,-1);
\draw[thick](-1.5,0) .. controls (-0.5,-0.3) and (0.5,-0.3) .. (1.5,0);

\draw[thick,->](-1.2,1.18)--(-1.2,1.8);
\draw[thick,->] (-1.2,1.18) arc (120:150:1);
\draw[thick,->] (-1.7,0.15) arc (180:165:2);
\draw[thick,->] (-1.7,0.15) arc (120:150:1);
\draw[thick,->] (-2.2,-0.85) arc (180:165:2);
\draw[thick,->] (-2.2,-0.85)--(-2.65,-1.15);
\draw[thick,->] (-0.8,1.16) arc (255:265:2);
\draw[thick,->] (-1.3,0.13) arc (252:267:2);
\draw[thick,->] (-1.8,-0.88) arc (250:265:2);

\draw[thick,->](1.2,1.18)--(1.2,1.8);
\draw[thick,->] (1.2,1.18) arc (60:30:1);
\draw[thick,->] (1.7,0.15) arc (0:15:2);
\draw[thick,->] (1.7,0.15) arc (60:30:1);
\draw[thick,->] (2.2,-0.85) arc (0:15:2);
\draw[thick,->] (2.2,-0.85)--(2.65,-1.15);
\draw[thick,->] (0.8,1.16) arc (285:275:2);
\draw[thick,->] (1.3,0.13) arc (288:273:2);
\draw[thick,->] (1.8,-0.88) arc (290:275:2);

\draw[thick](0,1.06) arc (270:275:2);
\draw[thick,->](0,1.06) arc (270:265:2);
\draw[thick](0,-1.2) arc (270:278:2);
\draw[thick,->](0,-1.2) arc (270:262:2);
\draw[thick](0,-0.05) arc (270:277:2);
\draw[thick,->](0,-0.05) arc (270:263:2);

\node at (0,1.23) {$w$};
\node at (0,-1.05) {$w$};
\node at (0,0.1) {$w$};
\node at (-0.6,1.36) {$\lambda$};
\node at (-1.0,0.33) {$\lambda$};
\node at (-1.5,-0.68) {$\lambda$};
\node at (0.6,1.31) {$\eta$};
\node at (1.0,0.28) {$\eta$};
\node at (1.5,-0.73) {$\eta$};

\node at (-1.4,1.68) {$t$};
\node at (-1.6,1.18) {$\theta$};
\node at (-2.0,0.6) {$\sqrt{r}$};
\node at (-2.1,0.1) {$\theta'$};
\node at (-2.4,-0.35) {$r'$};
\node at (-2.8,-1) {$\zeta$};

\node at (1.4,1.68) {$t$};
\node at (1.6,1.18) {$\kappa$};
\node at (2.0,0.6) {$\sqrt{s}$};
\node at (2.2,0.1) {$\kappa'$};
\node at (2.4,-0.35) {$s'$};
\node at (2.8,-1) {$\mu$};

\node at (-1.25,2.75) {$t$};
\node at (1.25,2.75) {$t$};
\node at (-3.3,-1.65) {$\zeta$};
\node at (3.3,-1.65) {$\mu$};

\node at (0,2) {bff};
\node at (0,0.6) {bif};
\node at (0,-0.55) {bf$_0$};
\node at (-2.5,1.5) {of};
\node at (2.5,1.5) {af};
\node at (0,-1.8) {tf};

\end{tikzpicture}
\caption{The space $X_3$ and local projective coordinate systems.}
\label{fig:x3}
\end{figure}
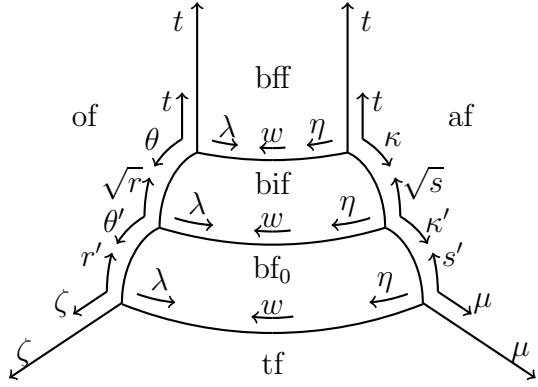

Our \textbf{fourth blow-up} is necessary both for the $G$ term in \eqref{eq:nicholsonmod} and for the exponential term therein. We blow up of$\cap$tf and af$\cap$tf, cubically with respect to tf. This creates two new boundary hypersurfaces of$_0$ and af$_0$. The term $h$ has order zero at each. Denote the new manifold with corners by $X_4$.

The \textbf{fifth blow-up} resolves the singularity in the exponential term. We blow up $D\cap$bif, quadratically with respect to $D$ This creates a new face ff and a new manifold with corners $X_5$. It splits bif in two and we give the two halves the names obif and abif according to whether they are adjacent to of or to af.

Finally, the \textbf{sixth blow-up} is two disjoint blow-ups: $D\cap$bf, quadratic with respect to $D$, creating bff, and $D\cap$bf$_0$, quadratic with respect to $D$, creating ff$_0$. This creates a new manifold with corners $X_6$. The fifth and sixth blow-ups have split each of the faces bf and bf$_0$ in two, and we give them names obf, abf, obf$_0$, abf$_0$ respectively according to whether they are adjacent to of or to af. The final space, at least the portion of it away from tif, is illustrated in Figure \ref{fig:x6}.

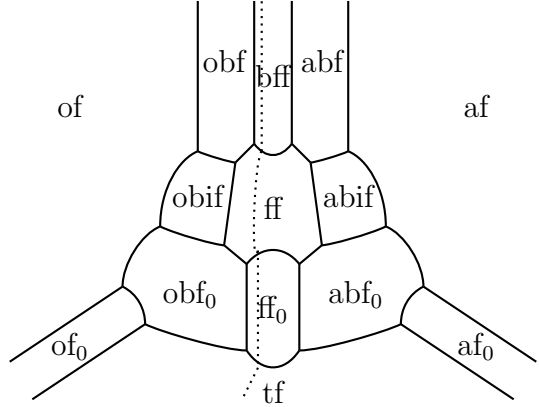
\begin{figure}
\centering
\begin{tikzpicture}
\draw[black,thick](-1,1)--(-1,3);
\draw[black,thick](1,1)--(1,3);
\draw[black,thick](-2,-0.8)--(-3.5,-1.8);
\draw[black,thick](-1.7,-1.3)--(-3.2,-2.3);
\draw[black,thick](2,-0.8)--(3.5,-1.8);
\draw[black,thick](1.7,-1.3)--(3.2,-2.3);
\draw[thick](-1.5,0) .. controls (-1.5,0.5) and (-1.25,0.9) .. (-1,1);
\draw[thick](-1.5,0) .. controls (-1.75,-0.1) and (-2,-0.5) .. (-2,-0.8);
\draw[thick](1.5,0) .. controls (1.5,0.5) and (1.25,0.9) .. (1,1);
\draw[thick](1.5,0) .. controls (1.75,-0.1) and (2,-0.5) .. (2,-0.8);
\draw[thick](-2,-0.8) .. controls (-1.8,-0.9) and (-1.7,-1.1) .. (-1.7,-1.3);
\draw[thick](2,-0.8) .. controls (1.8,-0.9) and (1.7,-1.1) .. (1.7,-1.3);

\draw[thick](-0.25,1.1) .. controls (-0.25,3) .. (-0.25,3);
\draw[thick](0.25,1.1) .. controls (0.25,3) .. (0.25,3);

\draw[thick](-1,1) .. controls (-0.9,0.95) and (-0.55,0.85) .. (-0.5,0.85);
\draw[thick](1,1) .. controls (0.9,0.95) and (0.55,0.85) .. (0.5,0.85);

\draw[thick](-0.5,0.85) .. controls (-0.5,0.85) and (-0.25,1.1) .. (-0.25,1.1);
\draw[thick](0.5,0.85) .. controls (0.5,0.85) and (0.25,1.1) .. (0.25,1.1);

\draw[thick](-1.5,0) .. controls (-1.3,-0.1) and (-0.75,-0.25) .. (-0.65,-0.25);
\draw[thick](1.5,0) .. controls (1.3,-0.1) and (0.75,-0.25) .. (0.65,-0.25);

\draw[thick](-0.65,-0.25) .. controls (-0.65,-0.25) and (-0.5,0.85) .. (-0.5,0.85);
\draw[thick](0.65,-0.25) .. controls (0.65,-0.25) and (0.5,0.85) .. (0.5,0.85);

\draw[thick](-0.65,-0.25) .. controls (-0.65,-0.25) and (-0.35,-0.5).. (-0.35,-0.5);
\draw[thick](0.65,-0.25) .. controls (0.65,-0.25) and (0.35,-0.5) .. (0.35,-0.5);

\draw[thick](-0.35,-0.5)--(-0.35,-1.65);
\draw[thick](0.35,-0.5)--(0.35,-1.65);

\draw[thick](-1.7,-1.3) .. controls (-1.5,-1.4) and (-0.65,-1.65) .. (-0.35,-1.65);
\draw[thick](1.7,-1.3) .. controls (1.5,-1.4) and (0.65,-1.65) .. (0.35,-1.65);

\draw[thick] (-0.25,1.1) .. controls (-0.1,0.9) and (0.1,0.9) .. (0.25, 1.1);
\draw[thick] (-0.35,-0.5) .. controls (-0.15,-0.25) and (0.15,-0.25) .. (0.35, -0.5);
\draw[thick] (-0.35,-1.65) .. controls (-0.15,-1.95) and (0.15,-1.95) .. (0.35,-1.65);

\node at (0,0.25) {ff};
\node at (0,2) {bff};
\node at (0,-1.1) {ff$_0$};
\node at (-0.65,2.2) {obf};
\node at (0.65,2.2) {abf};
\node at (-1,0.4) {obif};
\node at (1,0.4) {abif};
\node at (-1.1,-0.9) {obf$_0$};
\node at (1.1,-0.9) {abf$_0$};
\node at (0,-2.2) {tf};
\node at (-2.7,-1.6) {of$_0$};
\node at (2.7,-1.6) {af$_0$};
\node at (-2.7, 1.6) {of};
\node at (2.7,1.6) {af};

\draw[thick,dotted](-0.15,1.0)--(-0.15,3);
\draw[thick,dotted](-0.15,1.0) .. controls (-0.25,0.9) and (-0.3,-0.6) .. (-0.2,-0.32);
\draw[thick,dotted] (-0.2,-0.32)--(-0.2,-1.88);
\draw[thick,dotted] (-0.2,-1.88)--(-0.4,-2.28);

\end{tikzpicture}
\caption{The space $X_6$ (away from tif). The dotted line is the boundary of the lift of $U_a\times[0,\infty]_t$.}
\label{fig:x6}
\end{figure}

Now we begin analyzing each of the terms in \eqref{eq:intnew}.

\begin{lemma} The function $\exp[-h(z,\nu,t)]$ pulls back to a polyhomogeneous conormal function on an open set $\hat U\subseteq X_5$ containing the lift of $U_a\times[0,\infty]_t$, with empty index sets at tif, bf$\cap\hat U$, af, and abif.
\end{lemma}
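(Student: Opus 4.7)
The plan is to verify polyhomogeneity of $h$ chart by chart on $X_5$ restricted to a neighborhood $\hat U$ of the lift of $U_a\times[0,\infty]_t$, and then to pass from $h$ to $e^{-h}$ using two general principles. First, if $h$ pulls back boundedly and polyhomogeneously to a face $F$, then by Theorem~\ref{thm:comp} so does $e^{-h}$. Second, if $h$ is polyhomogeneous at $F$ with leading order $-k<0$ and strictly positive leading coefficient, then $e^{-h}$ vanishes to infinite order at $F$, giving the empty index set. The global fact that makes the second principle applicable is that on $U_a$ we have $z\geq\nu$; since
\[h=2(z-\nu)tf_1(t)+\tfrac{1}{3}\nu t^3 f_2(t)\]
and $f_1,f_2$ are positive on $[0,\infty)$, $h\geq 0$ throughout $\hat U$. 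So at each face it suffices to decide whether $h$ is bounded or diverges polyhomogeneously to $+\infty$, with no sign issues to worry about.

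For the four empty-index faces I would argue as follows. At tif, using $\tau=e^{-t/6}$, we have $\sinh t\sim\tfrac{1}{2}\tau^{-6}$, so $h\gtrsim z\tau^{-6}$ for $(z,\nu)$ in a compact subset of $U_a$ with $z$ bounded below. At af, using $(\zeta,\mu,t)$ in the interior with $t$ bounded away from $0$ and $\mu$ bounded, $h=2\zeta^{-3}\sinh t-2\mu^{-3}t\sim 2\zeta^{-3}\sinh t$, polyhomogeneous of order $-3$ with positive coefficient. At bf$\cap\hat U$, the projective coordinates $(\mu,\eta,t)$ with $\eta<1$ give
\[h=\mu^{-3}\Bigl[2(\eta^{-3}-1)tf_1(t)+\tfrac{t^3}{3}f_2(t)\Bigr],\]
whose bracketed factor is smooth, positive, and bounded below on compact subsets of the interior of bf. At abif, using $(\mu,\eta,s)$ with $s=t/\mu$,
\[h=2(\eta^{-3}-1)s\mu^{-2}f_1(s\mu)+\tfrac{s^3}{3}f_2(s\mu),\]
and the first summand forces $h\to+\infty$ at order $-2$ with positive coefficient (since $\eta<1$ on the abe side). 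In each case the leading term is positive polyhomogeneous of negative order, triggering the second principle.

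It remains to check that $h$ is polyhomogeneous and bounded on the other faces of $\hat U$: tf, af$_0$, bf$_0\cap\hat U$, and ff$\cap\hat U$. At tf ($t\to 0$), $h=O(t)$ extends jointly smoothly. At af$_0$ (from the cubic-in-tf blow-up of af$\cap$tf), the coordinate $r'=t/\zeta^3$ makes $zt=r'$ bounded and $\nu t,\nu t^3\to 0$, so $h\to 2r'$. At bf$_0\cap\hat U$, coordinates $(\mu,\eta,s')$ with $s'=t/\mu^3$ make $\nu t=s'$ bounded and $\nu t^3=s'^3\mu^6\to 0$, so $h\to 2(\eta^{-3}-1)s'$. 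At ff (from the quadratic blow-up of $D\cap$bif), Taylor expansions $\eta^{-3}-1=-3\tilde w\mu^2+O(\mu^4)$ and $f_j(s\mu)=1+O(\mu^2)$ combined with $s=t/\mu$ give
\[h=-6s\tilde w+\tfrac{s^3}{3}+O(\mu^2),\]
bounded on compact subsets of ff. The evenness and smoothness of $f_1,f_2$ ensure that these expansions are genuinely polyhomogeneous in $\mu$, not merely $O$-estimates.

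The main obstacle I anticipate is not any single chart computation but the bookkeeping: verifying that the fractional scalings introduced by the third, fourth, and fifth blow-ups are exactly compatible with the Taylor remainders of $f_1,f_2$ and of $\eta^{-3}$, so that the index sets --- especially at ff and at the intersections of ff with its neighbors --- come out polyhomogeneous with the expected orders. Once this bookkeeping is done, a partition of unity combines the charts into a single polyhomogeneous $h$ on $\hat U$, and the two principles above turn $h$ into $e^{-h}$ with empty index sets at tif, bf$\cap\hat U$, af, and abif.
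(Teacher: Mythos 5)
Your high-level approach is the same as the paper's: verify chart by chart that $h$ either stays bounded (so Theorem~\ref{thm:comp} applies) or diverges polyhomogeneously to $+\infty$ (so $e^{-h}$ vanishes to infinite order), then combine. Indeed your ``two principles'' are an informal restatement of the paper's Theorem~\ref{thm:exppc}, which is exactly what the paper invokes. However, there are two genuine gaps in the execution.

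First, the global claim that $z\geq\nu$ on $U_a$, hence $h\geq 0$ throughout $\hat U$, is false. By its definition, $U_o$ only covers the part of $Q$ with $w/\mu^2>3$; for $U=U_o\cup U_a$ to be a neighborhood of fe, $U_a$ must contain points with $0\leq w/\mu^2\leq 3$, i.e.\ $z<\nu$. So $h$ can be negative on the lift of $U_a\times[0,\infty]_t$, and the ``no sign issues'' shortcut does not hold.

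Second, and related, your verification at bf$\cap\hat U$ is incomplete precisely where the sign becomes delicate. You work in $(\mu,\eta,t)$ with $\eta<1$, which misses both the $\eta>1$ portion of the lift and the corner bf$\cap$ff, which neither your bf chart nor your interior-of-ff chart covers. In the paper's coordinates $(\kappa=\mu/t,\,W=w/t^2,\,t)$ near that corner, one has
\[
h=\tfrac13\kappa^{-3}\bigl(f_2(t)-18Wf_1(t)f_3(Wt^2)\bigr),
\]
which at $t=0$ has sign $1-18W$. Thus the required positivity (and hence the type-2 growth $h\gtrsim\kappa^{-3}$) holds only for $W<1/18$, a threshold invisible from your interior computation, and one then has to check that the lift of $U_a\times[0,\infty]_t$ actually stays inside $\{W<c<1/18\}$ near bf$\cap$ff. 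Your restriction $\eta<1$ corresponds to $W<0$, a strictly smaller region that excludes part of the lift. The same gap recurs at the other corners you leave unexamined (abif$\cap$ff$\cap$bf$_0$, af$\cap$af$_0\cap$bf$_0$, and the corner of bf at abif): in each case the sign and growth rate must be verified in corner coordinates, not extrapolated from face interiors. These corner checks constitute the real content of the lemma and cannot be deferred to ``bookkeeping'' as your closing paragraph suggests.
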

\begin{proof}The proof uses Theorem \ref{thm:exppc}. In particular, we will show that away from of and obif, and the portion of bf outside $\hat U$, $h$ is type 1 at ff, bf$_0$, af$_0$, of$_0$, and tf, and is type 2 at tif, af, bf$\cap\hat U$, and abif. The result follows immediately from Theorem \ref{thm:exppc}.

It is easy to check that the hypersurfaces ff, bf$_0$, af$_0$, of$_0$, and tf are type 1; indeed the leading order of $h$ is zero at the first four of these and $1$ at tf, which means that $h$ is continuous down to each. We must show that each of tif, af, bf$\cap\hat U$, and abif are type 2.

First consider $h$ near tif. Here, away from of, we may use the coordinates $(\tau,\mu,\eta)$, in which $h$ is
\[\frac{2(1-\tau^{12})}{\mu^3\eta^3\tau^6}+\frac{12\log\tau}{\mu^3}.\]
The coordinates $\tau,\mu,\eta$ are boundary defining functions for tif, bf, and af respectively, so the dominant term is $2\mu^{-3}\eta^{-3}\tau^{-6}$ and we have the required growth at each boundary hypersurface. Thus tif, bf, and af are all type 2 boundary hypersurfaces whenever these coordinates are legitimate, that is, in any neighborhood of the corner tif$\cap$bf$\cap$af which does not intersect any other boundary hypersurfaces. This is the only coordinate system we need near tif, so we have shown that tif is type 2 overall.

Now we move to bf$\cap\hat U$. The triple intersection af$\cap$abf$\cap$abif has coordinates $(\kappa,\eta,t)$ and there $h$ is
\[\frac{2f_1(t)}{t^2\kappa^3}(\frac{1}{\eta^3}-1)+\frac{1}{3\kappa^3}f_2(t).\]
The dominant term is $\frac{2f_1(t)}{t^2\kappa^3\eta^3}$ and this means we have the required growth at each of the three boundary hypersurfaces.

We must now examine h near bf$\cap$ff. Before the creation of ff we use $(\kappa,w,t)$. To create ff we perform a quadratic blow-up of $\{t=w=0\}$, quadratic with respect to $\{w=0\}$. This means that on the interior of bf$\cap$ff, good coordinates are 
\begin{equation}\label{eq:coordsforS}
(\kappa,W:=\frac{w}{t^2},t),
\end{equation}
in which, using \eqref{eq:exp2}, we find
\[h=-\frac{6W}{\kappa^3}f_1(t)f_3(Wt^2) + \frac{1}{3\kappa^3}f_2(t)=\frac 13\kappa^{-3}(f_2(t)-18Wf_1(t)f_3(Wt^2)).\]
Since $f_1(0)=f_2(0)=f_3(0)=1$, we see that the key value is $W=\frac{1}{18}$. When $W<1/18$, at least for small $t$, $h$ grows at the required rate at bf. So as long as $\hat U$ is, near bf$\cap$ff, contained in $\{W<c<1/18\}$ for some $c$, bf$\cap\hat U$ is a type 2 bhs. The point $(\kappa,W,t)=(0,1/18,0)$ lifts to be in the interior of obf$\cap$ff on $X_6$. So such a region does contain the lift of $U_a\times[0,\infty]_t$, whose lift to $X_6$ is in turn illustrated in Figure \ref{fig:x6}.

On the other hand, near bf$\cap$ff$\cap$abif, good coordinates are
\[(\kappa,w_{-}=\sqrt{-w},\frac{t}{w_{-}}),\]
in which
\[h=6\kappa^{-3}(\frac{t}{w_{-}})^{-2}f_1(w_{-}\cdot\frac{t}{w_-})f_3(-w_-^2)+\frac 1{3}\kappa^{-3}f_2(w_{-}\cdot\frac{t}{w_-}).\]
The leading term is $6\kappa^{-3}(\frac{t}{w_-})^{-2}$, so we have the required growth at abif (with bdf $t/w_{-}$) and bf (with bdf $\kappa$). This completes the proof that bf is type 2 overall.

For abif, we must analyze the coordinate neighborhoods abif$\cap$af$\cap$bf$_0$ and abif$\cap$ff$\cap$bf$_0$. In the former, coordinates are $(\kappa',\eta,\sqrt s)$, in which $h$ is given by
\[2(\kappa')^{-2}(\frac{1}{\eta^3}-1)f_1(\kappa'\sqrt s^3)+\frac 13\sqrt s^6f_2(\kappa'\sqrt s^3).\]
The dominant term is $2(\kappa')^{-2}\eta^{-3}$, which satisfies the required growth estimate. On the other hand, near abif$\cap$ff$\cap$bf$_0$, we had $(\kappa',w,\sqrt s)$ before the creation of ff. Creating ff is blowing up $\{\kappa'=w=0\}$, quadratically with respect to $\{w=0\}$. So coordinates near the aforementioned corner are
\[(w_{-},\frac{\kappa'}{w_{-}},\sqrt s).\]
In these coordinates $h$ is given by
\[6(\frac{\kappa'}{w_{-}})^{-2}f_1(\frac{\kappa'}{w_-}w_{-}\sqrt s^3)f_3(-w_{-}^2)+\frac 13\sqrt s^6f_2(\frac{\kappa'}{w_{-}}w_-\sqrt s^3),\]
which has dominant term $6(\kappa'/w_-)^{-2}$ and therefore grows at abif. This means that abif is type 2.

Finally, for af, we just need to analyze af$\cap$af$_0\cap$bf$_0$. Before the creation of af$_0$ we had $(\mu,\eta,s')$. Creating af$_0$ blows up $\{s'=\eta=0\}$, cubically with respect to $s'$. Cooordinates near af$\cap$af$_0\cap$bf$_0$ are thus
\[(\mu,\frac{\eta}{(s')^{1/3}},(s')^{1/3}).\]
In these coordinates,
\[h=2((\frac{\eta}{(s')^{1/3}})^{-3}-((s')^{1/3})^3)f_1(((s')^{1/3})^3\mu^3)+\frac 13((s')^{1/3})^9\mu^6f_2(((s')^{1/3})^3\mu^3).\]
The dominant term is $2(\frac{\eta}{(s')^{1/3}})^{-3}$, which does have the required growth estimate at af (with bdf $\eta/(s')^{1/3}$). This means that af is type 2 as well, and this completes the proof of the proposition.
\end{proof}

Now we consider the $G$ term in the integrand of \eqref{eq:nicholsonmainterm}.

\begin{lemma}\label{lem:gonx4} The function $G(2z\sinh t)$ pulls back to a polyhomogeneous conormal function on $X_4$.
\end{lemma}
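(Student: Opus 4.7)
The plan is to regard $G$ as a polyhomogeneous function on the compactified half-line $[0,\infty]_x$ --- polyhomogeneous expansions at both endpoints were established in the previous lemma --- and to realize $G(2z\sinh t)$ as the pullback of $G$ under the map $F\colon X_4\to[0,\infty]_x$ given by $F=2z\sinh t=2\zeta^{-3}\sinh t$. A composition theorem for polyhomogeneous functions from Appendix~B (in the spirit of Theorem~\ref{thm:comp}) then yields polyhomogeneity of $G\circ F$ on $X_4$, provided $F$ itself is polyhomogeneous on $X_4$ and at every boundary hypersurface (bhs) where $F$ either vanishes or blows up, $F$ (respectively $1/F$) factors as a smooth nonvanishing function times a monomial in boundary defining functions. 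The real work is therefore to verify this product-monomial structure chart by chart on $X_4$.

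Away from tf and tif, $\sinh t$ is smooth and positive, so the factor $\zeta^{-3}$ controls $F$: this immediately gives the required form on the interiors of of, of$_0$, af, and bf, and on the corners between these. At the interior of tf one has $F\sim 2zt$, a monomial in $t$ times a smooth positive factor, and at tif, using the bdf $\tau=e^{-t/6}$, one has $\sinh t=\tfrac12(\tau^{-6}-\tau^{6})$ and hence $F=\zeta^{-3}(\tau^{-6}-\tau^{6})$, which is $\zeta^{-3}\tau^{-6}$ times a smooth positive function of $\tau^2$. The nontrivial checks lie at the corners where $t=0$ meets $\zeta=0$, and the faces bif, bf$_0$, of$_0$, af$_0$ created by the second, third and fourth blow-ups are tailor-made for these: for instance, near af$\cap$bif$\cap$tf, using $(\mu,\eta,s)$ with $s=t/\mu$ and $\eta=\zeta/\mu$, I find
\[
F=2\mu^{-3}\eta^{-3}\sinh(\mu s)=2\mu^{-2}\eta^{-3}\,s\cdot\frac{\sinh(\mu s)}{\mu s},
\]
and $\sinh(\mu s)/(\mu s)$ is a smooth positive even function of $\mu s$. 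In the quasihomogeneous chart $(\mu,\eta,s')$ with $s'=t/\mu^3$ near af$\cap$bf$_0\cap$tf one gets $F=2\eta^{-3}s'\cdot\sinh(\mu^3 s')/(\mu^3 s')$, and in the chart $(\zeta,r')$ with $r'=t/\zeta^3$ near af$\cap$af$_0\cap$tf one gets $F=2r'\cdot\sinh(\zeta^3 r')/(\zeta^3 r')$; the of-side charts are symmetric. In every chart $F$ is a bdf-monomial times a smooth positive function.

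Once the structure of $F$ is in hand, the composition theorem pulls back the index set of $G$ at $x=0$ (which carries a $-\log$ term together with a smooth part) to each bhs where $F$ vanishes, and pulls back the index set of $G$ at $x=\infty$ (half-integer powers of $x^{-1}$) to each bhs where $F$ blows up; on bhs where $F$ is bounded away from both $0$ and $\infty$, the pullback is simply smooth. I expect the main technical obstacle to be bookkeeping: making sure that the quasi-homogeneous weighting of the third and fourth blow-ups is compatible with the form required by the composition theorem, and that log terms are attached only at those bhs at which $F$ genuinely vanishes. Once that is in order, the lemma follows without further computation.
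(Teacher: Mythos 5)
Your overall route is the same as the paper's: show chart by chart that $F=2z\sinh t$ lifts to a b-map from $X_4$ to $[0,\infty]$, i.e.\ that $F$ and $F^{-1}$ each factor as a smooth nonvanishing function times a monomial in boundary defining functions (this is precisely Proposition~\ref{prop:easybmap}), and then pull back $G$. The one real issue is the tool you cite. What licenses the conclusion is Melrose's pullback theorem, Theorem~\ref{thm:pullback} in Appendix~A, not Theorem~\ref{thm:comp} or a variant of it from Appendix~B. Theorem~\ref{thm:comp} assumes the inner polyhomogeneous function is bounded ($\inf\mathcal E\ge0$) and that the outer function is smooth on a set compactly containing its image; here $F$ is unbounded (it blows up at af, bf, bif, and tif and vanishes at tf) with image all of $(0,\infty)$, and $G$ is not smooth up to $0$ or $\infty$ --- it has a logarithm at $0$ and only half-integer powers at $\infty$. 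So Theorem~\ref{thm:comp} does not apply. The b-map condition you formulate is exactly the hypothesis of the pullback theorem, which also transfers the index sets of $G$ at $0$ and $\infty$ onto the appropriate faces of $X_4$ automatically. A minor bookkeeping note: after the third and fourth blow-ups, bif and af are each separated from tf (by bf$_0$ and af$_0$ respectively), so the corners you name af$\cap$bif$\cap$tf and af$\cap$af$_0\cap$tf are not corners of $X_4$; the formulas you wrote are nonetheless the right local factorizations in the charts near bf$_0\cap$tf and af$_0\cap$tf, and the needed region is covered.
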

\begin{proof} This is a consequence of Melrose's pullback theorem and Proposition \ref{prop:easybmap}.
\end{proof}
Since the blow-down map from $X_5$ to $X_4$ is a b-map, it is an immediate consequence that $G(2z\sinh t)$ is a polyhomogeneous conormal function on $X_5$.

Overall, we claim:
\begin{proposition} The integrand in \eqref{eq:nicholsonmainterm}, namely
\[G(2z\sinh t)e^{-h(z,\nu,t)},\]
pulls back to a polyhomogeneous conormal function on $\hat U$, with empty index sets at tif, bff, abf, af, and abif.
\end{proposition}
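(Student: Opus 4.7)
My plan is to combine the two preceding lemmas, using the fact that polyhomogeneous conormal functions on a manifold with corners form an algebra under pointwise multiplication, and then to track index sets through the final blow-down $X_6 \to X_5$. First, I would lift both factors to the space $X_6$, which is the ambient space on which the conclusion is naturally phrased, since bff is a face of $X_6$ rather than of $X_5$. Lemma \ref{lem:gonx4} gives polyhomogeneity of $G(2z\sinh t)$ on $X_4$, and the previous lemma gives polyhomogeneity of $\exp[-h]$ on $\hat U \subseteq X_5$. Since each of the blow-down maps in $X_6 \to X_5 \to X_4$ is a b-map, Melrose's pullback theorem promotes both factors to polyhomogeneous conormal functions on the lift of $\hat U$ in $X_6$.

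Next, I would invoke the product rule: if $f_1$ and $f_2$ are polyhomogeneous conormal on a manifold with corners, then so is $f_1 f_2$, with index set at each boundary hypersurface given by the extended union of the factor index sets. The key special case I need is that if either factor has empty index set at a face (meaning it decays faster than any polynomial there), then so does the product. Applying this to the pullbacks of $G(2z\sinh t)$ and $\exp[-h]$ immediately yields polyhomogeneity of the integrand on $\hat U \subseteq X_6$.

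Finally, I would identify which boundary hypersurfaces inherit an empty index set. The previous lemma gives empty index sets for $\exp[-h]$ at tif, bf$\cap\hat U$, af, and abif on $X_5$. Under the sixth blow-up, the face bf of $X_5$ is resolved into the three faces obf, abf, and bff of $X_6$, all of which project down to bf; hence the pullback of $\exp[-h]$ has empty index set at each of them inside $\hat U$. Combined with the direct transfers at tif, af, and abif, we obtain empty index sets for the exponential factor at tif, bff, abf, af, and abif, and these propagate to the product. The substantive analytic work has already been absorbed into the previous lemma (verifying the type~2 hypothesis of Theorem \ref{thm:exppc} at each of these faces), so the present proposition is essentially a bookkeeping step; the only mild subtlety is tracking how bf is partitioned into obf, abf, and bff by the final blow-up.
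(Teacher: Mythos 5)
Your proposal is essentially the same as the paper's: combine the two preceding lemmas via the algebra property of polyhomogeneous conormal functions, and track index sets through the blow-down maps. The paper's own proof is a single sentence invoking exactly these facts, while you supply the bookkeeping explicitly, which is a helpful elaboration and correctly resolves the mild imprecision in the proposition's statement (it references the face bff, which exists only on $X_6$, while $\hat U$ was defined as a subset of $X_5$; lifting to $X_6$ is the right fix). You also correctly observe that the critical point $W=1/18$ lies in the interior of obf$\cap$ff and is excised from $\hat U$, which is why obf is absent from the list of faces with empty index sets.

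One small factual slip: the index set of a product $f_1 f_2$ at a boundary hypersurface is the \emph{sum} $E_1 + E_2$ (Minkowski sum with the appropriate log-degree convention), not the extended union; the extended union governs sums and unions of functions, not products. This does not affect your argument, since the only consequence you actually use --- that the product inherits an empty index set wherever either factor has one --- is correct under either formula.
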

\begin{proof} This follows from the preceding two lemmas and the fact that polyhomogeneous conormal functions form an algebra.
\end{proof}

This allows us, now, to prove that 
\begin{proposition} The term $\widetilde M_{\nu}^2(z)$ satisfies the conclusion of Theorem \ref{thm:modsquared}.
\end{proposition}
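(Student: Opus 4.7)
The strategy is to apply Melrose's pushforward theorem to the ``$t$-integration'' map $\pi: X_6 \to Q$, defined as the composition of the iterated blow-down $X_6 \to X$, the projection $[0,\infty]_t \times [0,1)_\zeta \times [0,1)_\mu \to [0,1)_\zeta \times [0,1)_\mu$ that forgets $t$, and the blow-down onto $Q$. The point is that the six blow-ups of $X$ were chosen so that, after restriction to $\hat U$, $\pi$ lifts to a b-fibration onto $U_a$: the first blow-up of $X$ (of$\cap$af to bf) matches the blow-up creating be in $Q_1$, and the quadratic blow-up of $D\cap$bf (creating bff, obf, abf in $X_6$) matches the parabolic blow-up creating fe, obe, abe in $Q$. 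The remaining blow-ups, which introduce bif, bf$_0$, ff, ff$_0$, of$_0$, af$_0$, are ``fiber'' blow-ups that sit over boundary hypersurfaces of $Q$ without introducing any new face in the target.

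With the b-fibration property established, the preceding proposition supplies polyhomogeneity of the integrand on $\hat U$, and the empty index sets play the two roles needed for the pushforward to produce a clean polyhomogeneous expansion. At tif, where $t\to\infty$, vanishing to infinite order guarantees that the integral converges. At af, abf, bff, and abif, the empty index sets mean that these faces contribute nothing to the pushforward's index sets at ae, abe, and fe; the actual boundary behavior of $\widetilde M_\nu^2(z)$ at those faces of $U_a$ is inherited from the remaining faces of $\hat U$ sitting over them, notably af$_0$, abf$_0$, ff$_0$, obif, and ff.

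Given these ingredients, Melrose's pushforward theorem immediately yields that
\[\widetilde M_\nu^2(z) = \frac{4}{\pi^2}\int_0^\infty G(2z\sinh t)e^{-h(z,\nu,t)}\, dt\]
is polyhomogeneous conormal on $U_a$, with index sets at each boundary hypersurface determined combinatorially from those of the integrand on $\hat U$ and the exponents with which the corresponding boundary defining functions pull back through $\pi$.

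The principal obstacle is the verification that $\pi$ is a b-fibration on $\hat U$: one must check, in each local projective coordinate chart on $X_6$, the pullback orders of the boundary defining functions of $Q$, and confirm that no boundary hypersurface of $X_6 \cap \hat U$ maps to a corner of $U_a$ in a manner that would violate the b-fibration condition. This is precisely the sort of combinatorial computation systematized in the third appendix, and one must also confirm that excising obf (to restrict attention to $U_a$) does not break the b-fibration property on what remains.
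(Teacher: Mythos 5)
Your proposal is correct and follows essentially the same route as the paper: apply Melrose's pushforward theorem to the lifted projection $\pi_6:X_6\to U$, which the paper establishes to be a b-fibration in Proposition~\ref{prop:bfibx6} of the appendix, combined with the preceding proposition's polyhomogeneity of the integrand (with empty index sets ensuring both integrability at tif and the correct index sets on $U_a$). The paper's proof is terser, simply citing these two ingredients, while your write-up elaborates the geometric matching of blow-ups and flags the same appendix-level verification; the mathematical content is the same.
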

\begin{proof} As a result of the previous proposition, the integrand in \eqref{eq:intnew} is polyhomogeneous conormal on the lift of $U_a\times [0,\infty]$, which is contained in $\hat U$. Since $X_6$ is a blow-up of $X_5$, the same is true on the lift of $U_a\times[0,\infty]_t\subseteq X_6$. By Melrose's pushforward theorem and Proposition \ref{prop:bfibx6}, the integral \eqref{eq:intnew} is polyhomogeneous conormal on $U_a$.
\end{proof}

Finally, Theorem \ref{thm:modsquared} folllows from the following analysis of the remainder term:
\begin{proposition}\label{prop:remonq} The remainder term $R_{\nu}^2(z)$ is polyhomogeneous conormal on $\widetilde Q$.
\end{proposition}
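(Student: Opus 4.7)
The plan is to mirror the strategy used for $\widetilde M_{\nu}^2(z)$ in the previous propositions, exploiting a key simplification: the exponent in the remainder,
\[
h_R(z,\nu,t) := 2z\sinh t + 2\nu t,
\]
is strictly positive on $\{t,z,\nu > 0\}$, with no cancellation between its two summands. Consequently, unlike $h(z,\nu,t)$, there is no regime in which the exponential factor in \eqref{eq:nicholsonremainder} blows up, and the intermediate scale $t\sim\nu^{-1/3}$ that forced the third blow-up in the construction of $X_3$ is not needed here. I would work on a blown-up space that is either $X_6$ itself or a suitable quotient/coarsening of it, so that all maps used are still b-fibrations onto the appropriate faces of $\widetilde Q$.

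The first step is to verify that $h_R$ lifts to a polyhomogeneous conormal function on the chosen blown-up space. This is essentially the same check carried out for $h$ in the previous lemma, but much cleaner: at each boundary hypersurface one compares the two monomial contributions $2z\sinh t$ and $2\nu t$. Because they have the same sign, only the \emph{dominant} one matters for showing type 2 growth, and the dominant one is easy to identify in each coordinate chart. At tf (and the resolved faces ff, ff$_0$, bf$_0$ that arise from blowing up $t=0$ against the other small parameters) the exponent is type 1, since $h_R \sim 2(z+\nu)t$ vanishes to first order in a joint variable. At every other boundary hypersurface — including tif, of, af, bf, bif, and their resolved descendants — one of $2z\sinh t$ or $2\nu t$ tends to infinity at the prescribed inverse-power rate, so $h_R$ is type 2 there.

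Next, apply Theorem \ref{thm:exppc} to conclude that $e^{-h_R}$ is polyhomogeneous conormal, with empty index sets at every type-2 face. Lemma \ref{lem:gonx4} gives polyhomogeneity of $G(2z\sinh t)$, and since polyhomogeneous conormal functions form an algebra, the product $G(2z\sinh t)e^{-h_R}$ is polyhomogeneous on the chosen space, with empty index sets at essentially every face except tf and its replacements. One then applies Melrose's pushforward theorem along the analogue of Proposition \ref{prop:bfibx6} to produce a polyhomogeneous conormal function on $\widetilde Q$. The empty index sets at the faces of $X$ corresponding to large $z$ and large $\nu$ translate into rapid decay (infinite-order vanishing) of $R_{\nu}^2(z)$ along those directions of $\widetilde Q$, which is precisely what allows $\widetilde Q$ to be a larger space than $U_a$.

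The main obstacle I anticipate is bookkeeping rather than substance: precisely identifying which blow-ups of $X$ are required in this simpler setting, and verifying that the resulting combined map to $\widetilde Q$ is a b-fibration with the correct index family on the target. Because $e^{-h_R}$ decays to infinite order at every face away from $\{t=0\}$, the push-forward should inherit polyhomogeneity cheaply; the subtlety lies in ensuring the spaces match up so that the image really is $\widetilde Q$ and not merely some subset. Once this combinatorial setup is fixed — likely by reusing Proposition \ref{prop:bfibx6} and citing the appendix on maps between manifolds with corners — the rest of the argument is a direct translation of the proof for $\widetilde M_{\nu}^2(z)$.
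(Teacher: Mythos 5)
Your core observation is correct and matches the paper's strategy: because $h_R(z,\nu,t)=2z\sinh t+2\nu t$ has both summands positive, there is no cancellation, no diagonal singularity to resolve, and consequently the diagonal blow-ups that produced $X_5$ and $X_6$ are unnecessary here. The paper exploits this exactly as you anticipate, but with two concrete choices you leave open.

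First, rather than working on $X_6$ or an unspecified coarsening, the paper works directly on $X_4$ and invokes Proposition~\ref{prop:bfibx4} (the b-fibration $X_4\to\widetilde Q$) for the pushforward. This is precisely the resolution of the ``bookkeeping'' worry you flag: the target of the b-fibration from $X_4$ is $\widetilde Q$, not $U$, so the polyhomogeneity lands automatically on the larger space without needing to argue about rapid decay filling in the difference. Pushing forward from $X_6$ via Proposition~\ref{prop:bfibx6}, as you tentatively propose, would only give polyhomogeneity on $U$; you would then need an extra argument to upgrade to $\widetilde Q$, and you correctly identify this as the subtle point but do not close it.

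Second, for the exponential factor the paper does not verify the type-1/type-2 dichotomy of Theorem~\ref{thm:exppc} at every face of $X_4$, as you propose. It instead uses Theorem~\ref{thm:exppc} only in a neighborhood of tif (where of, af, tif are all type 2 for $h_R$, as you say), and away from tif it factors $e^{-h_R}=e^{-2z\sinh t}\cdot e^{-2\nu t}$, applying Propositions~\ref{prop:easybmap} and~\ref{prop:easybmap2} together with the pullback theorem to handle each factor. This factorization trick is cheaper than a face-by-face growth analysis, though your more direct verification would also succeed.

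So: same decomposition, same key tools, but you should replace ``$X_6$ or a coarsening'' with $X_4$ and cite Proposition~\ref{prop:bfibx4} rather than Proposition~\ref{prop:bfibx6}; that single substitution closes the gap you identify at the end of your proposal.
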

\begin{proof}
We claim that the integrand in \eqref{eq:nicholsonremainder} is polyhomogeneous conormal on $X_4$. Given this claim, the proposition follows from the pushforward theorem and Proposition \ref{prop:bfibx4}. To prove the claim, note that by Lemma \ref{lem:gonx4} it suffices to show that
\begin{equation}\label{eq:exprem}
e^{-2z\sinh t-2\nu t}
\end{equation}
is polyhomogeneous conormal on $X_4$.

But this is easy enough to show. Near tif, the exponent $2z\sinh t + 2\nu t$ is polyhomogeneous conormal on $X$ (using $\tau=e^{-t/6}$ as a defining function as usual), and all three of the boundary hypersurfaces of, af, and tif are type 2 bhs for it. Thus by Theorem \ref{thm:exppc}, $e^{-2z\sinh t-2\nu t}$ is polyhomogeneous conormal on $X$ in a neighborhood of tif. By the pullback theorem it is also polyhomogeneous conormal on $X_4$ in a neighborhood of tif. On the other hand, away from tif (that is, for $t$ bounded above), Proposition \ref{prop:easybmap} tells us that, since $e^{-x}$ is polyhomogeneous conormal on $[0,\infty]$, $e^{-2z\sinh t}$ is polyhomogeneous conormal on $X_4$. And by Proposition \ref{prop:easybmap2}, $e^{-\nu t}$ is polyhomogeneous conormal on $X_4$ away from tif. Since polyhomogeneous conormal functions form an algebra, the claim follows.
\end{proof}

Combining the previous two Propositions yields Theorem \ref{thm:modsquared}.


\section{Refined asymptotics of the modulus-squared for $\nu>>z$}

In the previous section, the exponential growth of $M_{\nu}^2(z)$ near oe and obe complicated some of our statements. Here we analyze that exponential growth, characterize it precisely, and use this characterization to prove the following:

\begin{theorem}\label{thm:modsquaredrefined} The following lifts to a polyhomogeneous conormal function on $U_o$:
\[M_{\nu}^2(z)e^{2\sqrt{\nu^2-z^2}-2\nu\cosh^{-1}(\nu/z).}\]
\end{theorem}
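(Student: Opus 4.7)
The plan is to adapt the blow-up analysis of Section~3, factoring the exponential $e^{h(z,\nu,t_0)}$ inside Nicholson's integral before analyzing it. On $U_o$ the function $t\mapsto h(z,\nu,t)=2z\sinh t-2\nu t$ has a unique critical point $t_0=t_0(z,\nu):=\cosh^{-1}(\nu/z)$, at which $h(z,\nu,t_0)=2\sqrt{\nu^2-z^2}-2\nu\cosh^{-1}(\nu/z)$; this is precisely the exponent appearing in Theorem~\ref{thm:modsquaredrefined}. Multiplying \eqref{eq:nicholsonmod} by $e^{h(z,\nu,t_0)}$ and bringing the factor inside yields
\[M_\nu^2(z)\,e^{2\sqrt{\nu^2-z^2}-2\nu\cosh^{-1}(\nu/z)}=\frac{4}{\pi^2}\int_0^\infty(1+e^{-4\nu t})\,G(2z\sinh t)\,e^{-H(z,\nu,t)}\,dt,\]
with $H(z,\nu,t):=h(z,\nu,t)-h(z,\nu,t_0)\ge 0$, vanishing only on the critical curve $\{t=t_0\}$. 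The integrand is now uniformly bounded, and the task reduces to pure polyhomogeneity on $U_o$.

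I would construct a manifold with corners $Y$ by restricting $X_6$ to the lift of $U_o\times[0,\infty]_t$ and performing one additional parabolic blow-up along the critical curve $\{t=t_0\}$. The Taylor expansion $H(z,\nu,t_0+s)=\sqrt{\nu^2-z^2}\,s^2+\tfrac{\nu}{3}s^3+\cdots$ suggests concentration width $|s|\sim\nu^{-1/2}=\mu^{3/2}$, so a quadratic (parabolic) blow-up of this curve is natural and creates a front face on which $H$ is a boundary defining function squared times a smooth positive function. Using $\cosh^{-1}(\lambda^{-3})=-3\log\lambda+\log(1+\sqrt{1-\lambda^6})$ and $\tau=e^{-t/6}$ as in Section~3, one sees that at the oe end the critical curve meets the tif face with tangency $\tau_0(\lambda)=\lambda^{1/2}(1+\sqrt{1-\lambda^6})^{-1/6}\sim 2^{-1/6}\lambda^{1/2}$; a preparatory parabolic blow-up of the oe$\cap$tif corner (with weight $\lambda\sim\tau^2$) must be carried out so that the critical curve becomes a p-submanifold of the blown-up space, after which the main quadratic blow-up proceeds cleanly.

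On $Y$ I would then verify three points, paralleling Section~3. First, $H$ is polyhomogeneous conormal on $Y$: $h(z,\nu,t)$ is polyhomogeneous on $X_6$, and the subtracted function $h(z,\nu,t_0)$ is polyhomogeneous on $U_o$ with leading order $-3$ and a single $\log\lambda$ at oe, by the explicit formula above. Second, at each boundary hypersurface of $Y$ where $H$ is unbounded it is of type~2 in the sense of Theorem~\ref{thm:exppc}, while at those where $H$ is bounded---including the new front face, where $H$ vanishes to order $2$---it is type~1; thus $e^{-H}$ is polyhomogeneous conormal on $Y$ with empty index sets at the type~2 faces. Third, $G(2z\sinh t)$ remains polyhomogeneous on $Y$ by the pullback theorem, as in Lemma~\ref{lem:gonx4}, and $(1+e^{-4\nu t})$ is polyhomogeneous. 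The product is therefore polyhomogeneous on $Y$, and Melrose's pushforward theorem applied to a b-fibration $Y\to U_o$ delivers the claim for the main term. The remainder $R_\nu^2(z)\,e^{h(z,\nu,t_0)}$ is handled by the same argument as in Proposition~\ref{prop:remonq}, using that $e^{h(z,\nu,t_0)}<1$ on $U_o$ so no estimates are worsened.

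The main obstacle will be arranging the new blow-up so that it glues cleanly to the existing faces of $X_6$ at the oe$\cap$tif corner, where the critical curve is non-smooth and must first be straightened by the preparatory blow-up just described; verifying the type~2 character of $H$ along this direction is the most delicate check. A secondary technical difficulty is that the subtracted constant $h(z,\nu,t_0)$ contributes $\log$ terms at the leading order of the expansion at oe, which must be propagated correctly through the combinatorial analysis of index sets on $Y$, in the spirit of the computations for $X_6$ in the combinatorial appendix. Once these geometric and combinatorial details are settled, the proof follows the template of Section~3 with minimal further modification.
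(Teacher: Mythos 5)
Your overall strategy matches the paper's: factor out $e^{h_{\min}}$ so that the integrand becomes $G\cdot e^{-H}$ with $H:=h-h_{\min}\geq 0$, then show polyhomogeneity of this integrand on a blown-up space and apply the pushforward theorem. You also correctly identify the need for the preparatory quadratic blow-up of the of$\cap$tif corner (the paper's face tof, introduced so that the lifted critical curve $S$ becomes a p-submanifold), and your Taylor expansion $H(z,\nu,t_0+s)=\sqrt{\nu^2-z^2}\,s^2+\tfrac{\nu}{3}s^3+\cdots$ is correct.

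However, the central resolving blow-up is wrong. Your own heuristic gives $|s|\sim\mu^{3/2}$, which is a $3/2$-power relation, not a quadratic one; concluding that a \emph{quadratic} (parabolic) blow-up is natural does not follow. In projective coordinates near obf the paper computes $H = 2\bigl(\hat W/\kappa^{3/2}\bigr)^2 a(t,\hat W)$, so the concentration in the defining function $\hat W$ for $S$ scales as $\kappa^{3/2}$, which no integer-order quasihomogeneous blow-up of $\{\hat W=\kappa=0\}$ resolves. This is precisely why the paper first changes the smooth structure — replacing the boundary defining functions at obf, obif, obf$_0$ by their square roots to form $X_6^{\sharp}$ — and only then performs \emph{cubic} quasihomogeneous blow-ups of $S\cap\text{obf}$ and $S\cap\text{tof}$, cubic with respect to $S$. (On the modified space, $H=2\bigl(\hat W/\sqrt{\kappa}^3\bigr)^2 a$, which the cubic blow-up renders type~1 at the new front face.) With a quadratic blow-up on $X_6^{\sharp}$ one gets $H\sim (\hat W/\kappa)^2/\kappa$, so the new front face would be type~2 for $H$ and $e^{-H}$ would have an \emph{empty} index set there, killing the Laplace-method contribution that is the whole point of the construction. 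You also propose blowing up the entire critical curve; what is actually needed (and what the paper does) is to blow up $S\cap\text{obf}$ and $S\cap\text{tof}$ — the intersections of $S$ with the boundary hypersurfaces it meets — since in the interior $e^{-H}$ is already smooth and a quasihomogeneous blow-up of an interior curve is not well-defined in this framework. The change of smooth structure, which you omit entirely, is the essential device; once it and the correct (cubic) blow-up orders are in place, the rest of your outline (type~1/type~2 classification, polyhomogeneity of $G$ by pullback, pushforward, and the remainder-term analysis) follows the paper's argument.
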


As before we write $M_{\nu}^2(z)=\widetilde M_{\nu}^2(z)+R_{\nu}^2(z)$ and first discuss the part corresponding to $\widetilde M_{\nu}^2(z)$, which is given by \eqref{eq:nicholsonmainterm}. We do this by comparing the exponential portion of the integrand to its maximum value, via a variant on Laplace's method for integral estimation. We only care about the regime with $\nu>3z$ so assume $\nu/z>1$. Under that assumption, the exponent $h(z,\nu,t)$ is minimized at
\[t=t(z,\nu):=\cosh^{-1}(\nu/z)=\log((\nu/z)+\sqrt{(\nu/z)^2-1})\]
and we denote its value $h(z,\nu,t(z,\nu))$ by $h_{\min}(z,\nu)$:
\begin{multline}h_{\min}(z,\nu)=2z\sinh(\cosh^{-1}(\nu/z))-2\nu\cosh^{-1}(\nu/z)\\=2z\sqrt{(\nu/z)^2-1}-2\nu\log((\nu/z)+\sqrt{(\nu/z)^2-1}).\end{multline}
 We write
\[T(z,\nu):=e^{-h_{\min}(z,\nu)}.\]
The idea is that \eqref{eq:hardpart} should equal $T(z,\nu)$ times something polyhomogeneous on $U$. 

Unfortunately, technical complications in the proof require us to change the smooth structure on $\widetilde Q$. Let $\widetilde Q^{\sharp}$ be the same manifold with corners as $\widetilde Q$, but with boundary defining function at obe changed to the square root of the original one, i.e. $\sqrt{\rho_{obe}}$. Let $U^{\sharp}$ be the same neighborhood as $U$, but with the new smooth structure. It is immediate that any function which is polyhomogeneous conormal on $\widetilde Q$ lifts to a function which is polyhomogeneous on $\widetilde Q^{\sharp}$; indeed the identity map $\iota:\widetilde Q^{\sharp}\to\widetilde Q$ is a b-map.

Now let $\chi(z,\nu)$ be a cutoff function which is 1 when $w/\mu^2\geq 3$ and $0$ when $w/\mu^2\leq 2$. Our goal is to prove the following:
\begin{theorem}\label{thm:hardpart}
The expression
\begin{equation}\label{eq:hardpart}
\frac{\chi(z,\nu)}{T(z,\nu)}\int_0^{\infty}G(2z\sinh t) e^{-2z\sinh t + 2\nu t}\, dt
\end{equation}
is polyhomogeneous conormal on $U^{\sharp}$.
\end{theorem}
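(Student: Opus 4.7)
The plan is to carry out Laplace's method for \eqref{eq:hardpart} within the polyhomogeneous framework. I would first shift the integration variable so the peak sits at the origin, substituting $\sigma:=t-t(z,\nu)$. Using the identities $z\cosh t(z,\nu)=\nu$ and $z\sinh t(z,\nu)=\sqrt{\nu^2-z^2}$ together with the addition formula for $\sinh$, one arrives at the clean expression
\[\Phi(z,\nu,\sigma):=h(z,\nu,t(z,\nu)+\sigma)-h_{\min}(z,\nu)=2\sqrt{\nu^2-z^2}\,(\cosh\sigma-1)+2\nu(\sinh\sigma-\sigma).\]
Both summands are nonnegative, vanish to second order at $\sigma=0$ with combined quadratic coefficient $\sqrt{\nu^2-z^2}$, and grow exponentially in $|\sigma|$ at infinity. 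In these coordinates, \eqref{eq:hardpart} becomes the Laplace-type integral
\[\chi(z,\nu)\int_{-t(z,\nu)}^{\infty}G(2z\sinh(t(z,\nu)+\sigma))\,e^{-\Phi(z,\nu,\sigma)}\,d\sigma.\]

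Next, I would build a manifold with corners $Y$ on which the integrand is polyhomogeneous conormal. Starting from $U^{\sharp}\times[-\infty,\infty]_\sigma$, suitably compactified at the $\sigma$ endpoints, the essential construction is a quasi-homogeneous parabolic blow-up along $\{\sigma=0\}\cap\mathrm{obe}$, with weights chosen so that the new front face parametrizes the natural Laplace scale $\sigma\sim (\nu^2-z^2)^{-1/4}$. The half-power smooth structure of $\widetilde Q^{\sharp}$ is designed precisely to turn this quarter-root into a half-integer power of an admissible boundary defining function, keeping the subsequent blow-ups and the resulting index sets within the standard polyhomogeneous framework. Further blow-ups at corners, together with compatibility with the blow-down map to $X_4$, allow invocation of the pullback theorem and Lemma \ref{lem:gonx4} to produce polyhomogeneity of $G(2z\sinh(t(z,\nu)+\sigma))$ on $Y$. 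A direct analysis then shows that $\Phi$ is polyhomogeneous, of type 1 at every face on which it is bounded---including the Laplace front face, where it limits to a positive multiple of a scaled squared coordinate---and of type 2 at the faces corresponding to $\sigma=\pm\infty$. Theorem \ref{thm:exppc} yields polyhomogeneity of $e^{-\Phi}$; multiplication by the smooth cutoff $\chi$ and by the polyhomogeneous $G$-factor preserves polyhomogeneity of the integrand. A b-fibration check for the projection $Y\to U^{\sharp}$, in the spirit of Propositions \ref{prop:bfibx4} and \ref{prop:bfibx6}, together with Melrose's pushforward theorem, then completes the proof.

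The principal obstacle is the blow-up construction. The Laplace scale $(\nu^2-z^2)^{1/4}$ behaves like $\nu^{1/2}$ at the corner obe$\cap$oe but degenerates as one approaches fe, where $\sqrt{\nu^2-z^2}\to 0$; the cutoff $\chi$ is present precisely to stop the analysis before this degeneracy becomes problematic. Even with the cutoff, the blow-ups must be arranged so that $\Phi$ acquires no spurious type 2 faces on which it is actually bounded, so that it attains the expected positive limit on the Laplace front face, and so that all new defining functions remain compatible with the half-power structure at obe. Verifying these conditions, and then the b-fibration property after composition with the blow-downs inherited from earlier sections, is the main technical content of the proof.
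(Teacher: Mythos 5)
Your closed formula $\Phi(z,\nu,\sigma)=2\sqrt{\nu^2-z^2}\,(\cosh\sigma-1)+2\nu(\sinh\sigma-\sigma)$ is correct and is arguably cleaner than what the paper writes down: the published proof works with $h-h_{\min}$ directly through a long list of projective charts and never derives this identity. However, the proposal contains gaps that cannot be filled without substantial additional work. The assertion that ``both summands are nonnegative'' is wrong: $\sinh\sigma-\sigma<0$ for $\sigma<0$, and a short computation gives $\Phi\sim\frac12\bigl(\sqrt{\nu^2-z^2}-\nu\bigr)e^{-\sigma}\to-\infty$ as $\sigma\to-\infty$, since $\nu>\sqrt{\nu^2-z^2}$. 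Thus the face $\{\sigma=-\infty\}$ of the proposed product $U^{\sharp}\times[-\infty,\infty]_\sigma$ is emphatically \emph{not} type 2 for $\Phi$; $e^{-\Phi}$ blows up there. What actually rescues the integral is that $\sigma$ ranges only over the sliding region $\{\sigma>-t(z,\nu)\}$, whose inner boundary (the lift of $\{t=0\}$) depends on $(z,\nu)$ and recedes to $-\infty$ as $(z,\nu)\to\mathrm{oe}$, and on which $\Phi=-h_{\min}$ is nonnegative and bounded at generic points (hence type 1, not type 2). Turning this sliding region into a manifold with corners, with a well-behaved compactification of its inner face uniformly down to oe, is a nontrivial piece of geometry your sketch does not address. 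The paper avoids the moving-boundary problem altogether by keeping $t$ as the fiber coordinate so that $\{t=0\}$ is a fixed coordinate face, at the cost of having to prove that the slanted peak set $S=\{t=\cosh^{-1}(\nu/z)\}$ lifts to a p-submanifold before it can be blown up.

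The proposed single quasi-homogeneous blow-up at $\{\sigma=0\}\cap\mathrm{obe}$ is also insufficient. Near $\mathrm{obe}\cap\mathrm{oe}$ the Laplace scale $(\nu^2-z^2)^{-1/4}\sim\zeta^{3/2}\lambda^{3/2}$ involves defining functions of \emph{both} obe and oe, so the corner of the peak set with oe must also be resolved; and, as you note but do not carry out, the $G$-factor already forces the full tower of blow-ups of $X_6$ to be present. Correspondingly, the paper's $X^{\sharp}$ starts from $X_6^{\sharp}$ and adds \emph{three} further blow-ups: tof (of $\mathrm{of}\cap\mathrm{tif}$, unrelated to $S$), obfx (of $S\cap\mathrm{obf}$), and tofx (of $S\cap\mathrm{tof}$), the last two cubic---not parabolic---with respect to $S$ in the $\sharp$-structure (with $\rho_{\mathrm{obe}}=\sqrt{\zeta}$ one has $(\nu^2-z^2)^{-1/4}\sim\rho_{\mathrm{obe}}^3$); and it then checks the type-1/type-2 alternative for $h-h_{\min}$ chart by chart. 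Your sketch has no counterpart to tof or tofx, which control respectively the interaction of $t\to\infty$ with $\nu\to\infty$ and the escape of the peak to $t=\infty$ near oe. One further correction: it is at the diagonal $\nu=z$, not at the face fe, that $\sqrt{\nu^2-z^2}\to 0$; at interior points of fe this quantity tends to infinity. The cutoff $\chi$ keeps the analysis away from the diagonal, not away from fe.
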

To prove this, we need to show that the following is polyhomogeneous on $U^{\sharp}$:
\begin{equation}\label{eq:hardpartmod}
\int_0^{\infty}\chi(z,\nu)G(2z\sinh t)e^{-2z\sinh t+2\nu t+2\sqrt{\nu^2-z^2}-2\nu\cosh^{-1}(\nu/z)}\, dt.
\end{equation}
We will do this as before, by showing that the integrand is polyhomogeneous conormal on an appropriate blown-up space and then applying the pushforward theorem.

The blown-up space in question starts with $X_6$. Now change boundary defining functions at obf, obif, and obf$_0$ -- all the faces which map to obe under the lifted projection in $t$ -- to be the square root of the prior bdfs. Call the resulting space $X_6^{\sharp}$. Any function which is smooth on $X_6$ is smooth on $X_6^{\sharp}$, but again, not vice versa.

Let $S$ be the set
\[S:=\{(z,\nu,t)\ :\ t=t(z,\nu)=\cosh^{-1}(\nu/z)\},\]
which is the set where the integrand is maximized. This lifts to sets on each of our blown-up spaces, and abusing notation, we denote each of these by $S$ as well.

Now we create a new space $X^{\sharp}$ by making three additional blow-ups. First we blow up of$\cap$tif, quadratically with respect to of, creating a new face tof. Then we blow up the intersection obf$\cap S$, and we do so cubically with respect to $S$, creating a new face obfx. Finally, we blow up the intersection tof$\cap S$, cubically with respect to $S$, creating a new face tofx. This, of course, requires:
\begin{proposition} $S$ is a p-submanifold of $[X_6^{\sharp};\textrm{of}\cap\textrm{tif}]$ in a neighborhood of obf and tofx.
\end{proposition}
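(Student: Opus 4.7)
My plan is to show that the set $S$, which is manifestly an interior p-submanifold in the interior of $X_6^\sharp$, extends to a p-submanifold across $\textrm{of}\cap\textrm{tif}$ once the quadratic blow-up of that corner is performed. The main input is the asymptotic $\cosh^{-1}(x) = \log(2x) + O(x^{-2})$ as $x \to \infty$.

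Near the corner $\textrm{obf}\cap\textrm{of}\cap\textrm{tif}$ I would use the coordinates $(\zeta, \lambda, \tau)$ on $X_6$, where $\lambda = \mu/\zeta$ is the bdf for of, $\zeta$ the bdf for obf (replaced by $\sqrt\zeta$ in the $X_6^\sharp$ smooth structure), and $\tau = e^{-t/6}$ the bdf for tif. Since $\nu/z = \lambda^{-3}$, the asymptotic expansion gives
\begin{equation*}
t = -3\log\lambda + \log 2 + O(\lambda^6), \qquad\text{equivalently}\qquad \tau = 2^{-1/6}\lambda^{1/2}\bigl(1 + O(\lambda^6)\bigr)
\end{equation*}
along $S$. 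So $S$ accumulates on $\textrm{of}\cap\textrm{tif}$ with parabolic scaling $\tau \sim \lambda^{1/2}$, which is precisely what the quadratic blow-up with respect to of is designed to resolve.

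Next I would pass to $[X_6^\sharp; \textrm{of}\cap\textrm{tif}]$. Introducing $\alpha := \sqrt{\lambda}$ and $\beta := \tau/\alpha$ (so $\lambda = \alpha^2$, $\tau = \alpha\beta$, and the new face tof appears as $\{\alpha = 0\}$), the equation for $S$ becomes
\begin{equation*}
\beta = 2^{-1/6}\bigl(1 + O(\alpha^{12})\bigr) =: g(\alpha, \sqrt{\zeta}),
\end{equation*}
with $g$ smooth and $g(0,0) = 2^{-1/6} \neq 0$. The differential $d(\beta - g)$ is everywhere nonzero, involves the interior coordinate $\beta$, and is independent of the boundary defining functions $\alpha$ and $\sqrt{\zeta}$; hence $S = \{\beta = g\}$ cuts out a smooth interior p-submanifold of codimension one, transverse to both tof and obf. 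In particular $S \cap \textrm{obf} = \{\beta = g(\alpha, 0)\}$ is a smooth curve, yielding a p-submanifold of obf whose blow-up will define obfx, and $S \cap \textrm{tof} = \{\beta = g(0, \sqrt{\zeta})\}$ is likewise a smooth curve providing the center for the subsequent blow-up creating tofx. The hardest step is pinning down the exact $\tau \sim \lambda^{1/2}$ scaling from the $\cosh^{-1}$ asymptotics, together with the matching choice of sharpened smooth structure at obf; once these are in hand, the p-submanifold property follows immediately from the non-vanishing value of $g$ at the origin.
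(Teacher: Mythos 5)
Your analysis near $\textrm{of}\cap\textrm{tif}$ is essentially correct and parallels the paper's: you derive $\tau \sim 2^{-1/6}\lambda^{1/2}$ from the asymptotic $\cosh^{-1}(x) = \log(2x) + O(x^{-2})$, whereas the paper obtains the same relation $\sigma = \tau/\sqrt\lambda \to 2^{-1/6}$ by writing the equation $\tau^{-6}(1+\tau^{12})=2\lambda^{-3}$ and solving via the quadratic formula, which has the advantage of making the smoothness of the resulting graph function in $\sqrt\lambda$ completely explicit. Your approach, taken on its own, still needs you to argue that the $O(\alpha^{12})$ error genuinely comes from a smooth function of $\alpha$, not merely an asymptotic series; here this does work because the expansion of $\cosh^{-1}$ contains only even powers of $x^{-1}$, i.e.\ integer powers of $\lambda^6 = \alpha^{12}$, but that step deserves a sentence.

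However, there is a genuine gap: the curve $S\cap\textrm{obf}$ has \emph{two} endpoints, and you have analyzed only one of them. The set $S = \{t = \cosh^{-1}(\nu/z)\}$ runs from the large-$t$ region (near $\textrm{of}\cap\textrm{tif}$, where $\nu/z \to \infty$) down to $t = 0$ (near the diagonal $\nu = z$). Consequently $S\cap\textrm{obf}$ ends not only at $\textrm{tof}$ but also at $\textrm{ff}$, the face created by blowing up $D\cap\textrm{bif}$. Near $\textrm{ff}\cap\textrm{obf}$ the coordinates are $(t, W, \sqrt\kappa)$ with $W = w/t^2$, and $S$ becomes $\{W = ((\cosh t)^{1/3}-1)/t^2\}$; one must check that the right-hand side is smooth and even in $t$, with value $1/6$ at $t=0$, so that after the substitution $\hat W := W - ((\cosh t)^{1/3}-1)/t^2$ the set $S$ is $\{\hat W = 0\}$ and therefore a p-submanifold. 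Without this second endpoint analysis, the proposition does not hold in a full neighborhood of $\textrm{obf}$, and the subsequent blow-up creating $\textrm{obfx}$ would not be legitimate everywhere it is needed.
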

\begin{proof} Near the interior of the boundary face obf, $S$ is a surface which smoothly and transversely intersects obf and thus is automatically a p-submanifold. However, we need to analyze a neighborhood of each endpoint of $S\cap$ obf. As we will see, the endpoints of $S\cap$ obf are at ff and at tof, and $S$ intersects both ff and tof transversely, meaning it is a p-submanifold.

Coordinates on $X_6$ valid near the interior of ff$\cap$obf, away from bff and obif, are given by namely $(t,W,\kappa)$ as in \eqref{eq:coordsforS}. However, $\kappa$ must be replaced by $\sqrt{\kappa}$ on $X_6^{\sharp}$, where obif is $\{\sqrt{\kappa}=0\}$. Tracking $t=\cosh^{-1}(\nu/z)$ through the various coordinates, we see it is
\[t=\cosh^{-1}(\nu/z)=\cosh^{-1}(\zeta^3/\mu^3)=\cosh^{-1}((w+1)^3)=\cosh^{-1}((Wt^2+1)^3).\]
Solving for $W$ yields:
\[W=\frac{(\cosh t)^{1/3}-1}{t^2},\]
which is smooth at $t=0$ (that is, $t=0$ is a removable singularity), and even in $t$, with $W(0)=1/6$. 
Thus $\{(t,W(t),\sqrt\kappa)\}$ is indeed a p-submanifold in the coordinates $(t,W,\sqrt\kappa)$, and moreover the intersection of $S$, obf, and ff occurs at $W=1/6$. To make the p-submanifold even more explicit, let us introduce the new coordinate
\[\hat W:=W-\frac{(\cosh t)^{1/3}-1}{t^2}.\]
The coordinate system $(t,\hat W,\sqrt\kappa)$ is also valid in the same region, and in this region, $S=\{\hat W=0\}$. 

As for tof, coordinates on $X_6^{\sharp}$ near the triple intersection tif, of, obf are given by $(\tau,\lambda,\sqrt\zeta)$, and $S$ is the set where $-6\log\tau=\cosh^{-1}(\lambda^{-3})$. Taking $\cosh$ of both sides (which introduces a spurious root, which we ignore) shows that this is
\[\tau^{-6}(1+\tau^{12})=2\lambda^{-3}.\]
When tof is created, we are blowing up $\{\tau=\lambda=0\}$, quadratically with respect to $\lambda$. Define
\begin{equation}\label{eq:introcoords5}
\sigma:=\frac{\tau}{\sqrt\lambda},\quad \lambda'=\frac{\lambda}{\tau^2}.
\end{equation}
Projective coordinates near tof$\cap$tif$\cap$obf are $(\sigma,\sqrt{\lambda},\sqrt{\zeta})$, and projective coordinates near tof$\cap$of$\cap$obf are $(\tau,\lambda',\sqrt{\zeta})$. As we will see $S$ intersects the center of tof$\cap$obf and therefore we can use either set for the present purposes. In the set $(\sigma,\sqrt{\lambda},\sqrt{\zeta})$, $S$ is the collection of points satisfying
\[(1+\sigma^{12}\sqrt{\lambda}^{12})=2\sigma^6.\]
By the quadratic formula, focusing on the root we care about, this is the set where
\[\sigma=(\frac{1-\sqrt{1-\sqrt\lambda^{12}}}{\sqrt\lambda^{12}})^{1/6}.\]
The right-hand side is a smooth function of $\sqrt\lambda$, equal to $2^{-1/6}$ at $\sqrt\lambda=0$. So $S$ is a p-submanifold. As before, we make this explicit by introducing the new coordinate
\[\hat\sigma:=\sigma-(\frac{1-\sqrt{1-\sqrt\lambda^{12}}}{\sqrt\lambda^{12}})^{1/6}.\]
Then $(\hat\sigma,\sqrt{\lambda},\sqrt{\zeta})$ are good coordinates near the triple intersection of tof, tif, and obf, and in these coordinates $S$ is given by $\{\hat\sigma=0\}$.
So $S$ is a p-submanifold at both endpoints of $S\cap$ obf and in the interior and therefore is a p-submanifold in the required region.
\end{proof}

The space $X^{\sharp}$ is illustrated in Figure \ref{fig:xsharp}. We claim:
\begin{proposition}\label{prop:keyprop} The integrand in \eqref{eq:hardpartmod} is polyhomogeneous conormal on a neighborhood of the support of $\chi(z,\nu)$ in $X^{\sharp}$, with empty index sets at all faces except tofx, obfx, ff, ff$_0$, and tf.
\end{proposition}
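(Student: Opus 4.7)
The plan is to apply the exponentiation theorem (Theorem \ref{thm:exppc}) to the exponent
\[E(z,\nu,t) := h_{\min}(z,\nu) - h(z,\nu,t),\]
which is nonpositive and vanishes exactly on the lift of $S$, and then to combine the resulting polyhomogeneous structure of $e^{E}$ with the polyhomogeneity of $G(2z\sinh t)$ and the smoothness of $\chi(z,\nu)$. The factor $G(2z\sinh t)$ is polyhomogeneous on $X_4$ by Lemma \ref{lem:gonx4} and so lifts through the remaining blow-ups to a polyhomogeneous function on $X^{\sharp}$; the factor $\chi$ pulls back to a smooth cutoff on $X^{\sharp}$. Since polyhomogeneous conormal functions form an algebra, it suffices to establish that $e^{E}$ is polyhomogeneous on a neighborhood of $\mathrm{supp}\,\chi$ in $X^{\sharp}$, with empty index sets at every face other than tofx, obfx, ff, ff$_0$, and tf.

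First I would verify that $E$ itself is polyhomogeneous on $X^{\sharp}$. The function $h(z,\nu,t)$ is already polyhomogeneous on $X$ and hence on $X^{\sharp}$. For $h_{\min}(z,\nu)$, a direct computation in the projective coordinates on $Q$ introduced in Section 2 shows that it lifts to a polyhomogeneous function on the portion of $Q^{\sharp}$ contained in $U_o$; the change of smooth structure at obe (introducing $\sqrt{\rho_{\mathrm{obe}}}$) is what absorbs the half-integer behavior arising from $\sqrt{\nu^2-z^2}$, and the logarithmic behavior of $\cosh^{-1}(\nu/z)$ is harmless because it is polyhomogeneous in the base variables once one accounts for the blow-ups.

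The core of the proof is a boundary-hypersurface type classification. The faces tofx, obfx, ff, ff$_0$, and tf should all be type 1 (bounded): at tf the exponent $h$ vanishes so $E = h_{\min}$ is polyhomogeneous and bounded on the interior of tf; at ff and ff$_0$ one checks boundedness directly in the projective coordinates adapted to these faces, noting that on $\mathrm{supp}\,\chi$ only the portions of ff and ff$_0$ near obf and obf$_0$ are relevant; and at obfx and tofx the cubic quasihomogeneous blow-up of $S$ is calibrated exactly so that $E$ extends polyhomogeneously as a bounded, nonpositive function with its only zero on the lift of $S$. All other faces of $X^{\sharp}$---of, of$_0$, tof, obf, obif, abif, abf, abf$_0$, af, af$_0$, bff, and tif---should be type 2, meaning $E \to -\infty$ at the polyhomogeneous rate required by Theorem \ref{thm:exppc}, which is verified face by face from \eqref{eq:exp2}, \eqref{eq:exp3}, and analogous expressions near the remaining corners, using that $h \to +\infty$ faster than $h_{\min}$ at each.

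The main obstacle, and the motivation for the particular blow-ups defining $X^{\sharp}$, is the analysis near obfx and tofx. Taylor-expanding $h$ in $t$ about the critical point $t(z,\nu) = \cosh^{-1}(\nu/z)$ yields
\[h(z,\nu,t) - h_{\min}(z,\nu) = \sqrt{\nu^2-z^2}\,(t - t(z,\nu))^2 + O\bigl((t - t(z,\nu))^3\bigr),\]
and the quadratic coefficient $\sqrt{\nu^2-z^2}$ diverges like $\mu^{-3}\sqrt{1-\eta^6}$ at oe. To balance this singular coefficient against the quadratic vanishing at $S$, one works in a projective coordinate of the form $(t - t(z,\nu))/\rho_S^{1/3}$, which is precisely the effect of the cubic blow-up of $S$ in obf (and analogously in tof, using the coordinates $(\hat\sigma,\sqrt\lambda,\sqrt\zeta)$ from the preceding proposition, in terms of which the explicit formula for $\hat\sigma$ makes the computation straightforward). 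In these coordinates, $E$ extends as a polyhomogeneous, bounded, nonpositive function on obfx and tofx, vanishing only on the lift of $S$, and the square root change of smooth structure at obe is essential to preserve polyhomogeneity through the interaction with $\sqrt{\nu^2-z^2}$. Once the type classification is complete, Theorem \ref{thm:exppc} produces $e^E$ with the stated index-set structure, and multiplication by $G(2z\sinh t)$ and $\chi$ finishes the proof.
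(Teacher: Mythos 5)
Your overall strategy matches the paper's exactly: factor the integrand into $\chi$, $G$, and the exponential of $h_{\min}-h$; handle the first two by pullback and the algebra property; and apply Theorem \ref{thm:exppc} to the exponent after classifying each boundary hypersurface as type 1 or type 2. Your identification of the quadratic vanishing $h - h_{\min} \approx \sqrt{\nu^2-z^2}\,(t-t(z,\nu))^2$ at $S$ as the central difficulty, and of tofx, obfx, ff, ff$_0$, tf as the type-1 faces, is correct. But the part you flag as the ``main obstacle'' is precisely where the sketch is wrong, and not just notationally. The coordinate you write, $(t-t(z,\nu))/\rho_S^{1/3}$, is not what the cubic quasi-homogeneous blow-up produces: with $\hat W$ a defining function for $S$ and $\sqrt\kappa$ the bdf for obf on $X_6^{\sharp}$, the blow-up yields $\hat W/\sqrt\kappa^3$ near the lift of $S$ and $\sqrt\kappa/\hat W^{1/3}$ near obf; neither is your expression, and in particular dividing a defining function of $S$ by a power of another defining function of $S$ is not a coordinate at all. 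Likewise, the $\sqrt{\rho_{\mathrm{obe}}}$ change of smooth structure is not there to ``absorb half-integer behavior from $\sqrt{\nu^2-z^2}$'' --- that function already has integer leading order $-3$ at obe. It is needed because the normalized exponent near ff$\,\cap\,$obf is a smooth positive multiple of $\hat W^2\kappa^{-3}$, which would call for a blow-up of non-integer order $3/2$ in the $\kappa$-direction; replacing $\kappa$ by $\sqrt\kappa$ turns this into an honest cubic blow-up with $(\hat W/\sqrt\kappa^3)^2 = \rho_{\mathrm{obf}}^{-6}$ near obf$\,\cap\,$obfx, so that obf is genuinely type 2 up to that corner.

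The more substantive gap is that a pointwise Taylor remainder $O((t-t(z,\nu))^3)$ does not establish polyhomogeneity of $h-h_{\min}$ on $X^{\sharp}$. One must show that, in each projective chart touching obfx or tofx, the pulled-back exponent has the form $(\text{type-2 growth factor})\cdot(\text{bdf of lifted }S)^2\cdot(\text{coefficient})$, where the coefficient is smooth and strictly positive \emph{uniformly down to the corners} obfx$\,\cap\,$ff, obfx$\,\cap\,$obf, obfx$\,\cap\,$tof, tofx$\,\cap\,$tof, and tofx$\,\cap\,$tif. The paper proves this in two intermediate propositions by explicitly computing the second $\hat W$- (resp.\ $\hat\sigma$-) derivative of the normalized exponent at $\hat W = 0$ (resp.\ $\hat\sigma = 0$), and verifying that it is a smooth, positive function of the remaining variables down to $t=0$ (resp.\ $\sqrt\lambda=0$). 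Your proposal asserts this ``makes the computation straightforward'' without carrying it out; it is exactly the step that justifies the cubic calibration and the $\sqrt{\rho_{\mathrm{obe}}}$ rescaling, and it cannot be skipped.
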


\begin{figure}
\centering
\begin{tabular}{p{6cm} p{6cm}}
\begin{subfigure}[b]{0.3\textwidth}
\centering
\begin{tikzpicture}
\draw[black,thick](-1,1)--(-1,3);
\draw[black,thick](1,1)--(1,3);
\draw[black,thick](-2,-0.8)--(-2.75,-1.3);
\draw[black,thick](-1.7,-1.3)--(-2.45,-1.8);
\draw[black,thick](2,-0.8)--(2.75,-1.3);
\draw[black,thick](1.7,-1.3)--(2.45,-1.8);
\draw[thick](-1.5,0) .. controls (-1.5,0.5) and (-1.25,0.9) .. (-1,1);
\draw[thick](-1.5,0) .. controls (-1.75,-0.1) and (-2,-0.5) .. (-2,-0.8);
\draw[thick](1.5,0) .. controls (1.5,0.5) and (1.25,0.9) .. (1,1);
\draw[thick](1.5,0) .. controls (1.75,-0.1) and (2,-0.5) .. (2,-0.8);
\draw[thick](-2,-0.8) .. controls (-1.8,-0.9) and (-1.7,-1.1) .. (-1.7,-1.3);
\draw[thick](2,-0.8) .. controls (1.8,-0.9) and (1.7,-1.1) .. (1.7,-1.3);

\draw[thick](-0.25,1.1) .. controls (-0.25,3) .. (-0.25,3);
\draw[thick](0.25,1.1) .. controls (0.25,3) .. (0.25,3);

\draw[thick](-1,1) .. controls (-0.9,0.95) and (-0.55,0.85) .. (-0.5,0.85);
\draw[thick](1,1) .. controls (0.9,0.95) and (0.55,0.85) .. (0.5,0.85);
\draw[thick](0.5,0.85) .. controls (0.5,0.85) and (0.25,1.1) .. (0.25,1.1);

\draw[thick](-0.45,0.9) .. controls (-0.35,0.8) and (-0.2, 0.95) .. (-0.3,1.05);
\draw[thick](-0.5,0.85)--(-0.45,0.9);
\draw[thick](-0.3,1.05)--(-0.25,1.1);

\draw[thick](-0.45,0.9) .. controls (-0.6,1.4) and (-0.7,2) .. (-0.7,3);
\draw[thick](-0.3,1.05) .. controls (-0.45,1.55) and (-0.52,2.15) .. (-0.52,3);

\draw[thick,->](-2.0,2.8)--(-0.57,2.4);
\node at (-2.4,2.8) {obfx};
\node at (-2.4, 1.9) {obf};
\draw[thick,->](-2.1,1.9)--(-0.85,1.4);
\draw[thick,->](-2.1,1.9)--(-0.3,1.75);

\draw[thick](-1.5,0) .. controls (-1.3,-0.1) and (-0.75,-0.25) .. (-0.65,-0.25);
\draw[thick](1.5,0) .. controls (1.3,-0.1) and (0.75,-0.25) .. (0.65,-0.25);

\draw[thick](-0.65,-0.25) .. controls (-0.65,-0.25) and (-0.5,0.85) .. (-0.5,0.85);
\draw[thick](0.65,-0.25) .. controls (0.65,-0.25) and (0.5,0.85) .. (0.5,0.85);

\draw[thick](-0.65,-0.25) .. controls (-0.65,-0.25) and (-0.35,-0.5).. (-0.35,-0.5);
\draw[thick](0.65,-0.25) .. controls (0.65,-0.25) and (0.35,-0.5) .. (0.35,-0.5);

\draw[thick](-0.35,-0.5)--(-0.35,-1.65);
\draw[thick](0.35,-0.5)--(0.35,-1.65);

\draw[thick](-1.7,-1.3) .. controls (-1.5,-1.4) and (-0.65,-1.65) .. (-0.35,-1.65);
\draw[thick](1.7,-1.3) .. controls (1.5,-1.4) and (0.65,-1.65) .. (0.35,-1.65);

\draw[thick] (-0.25,1.1) .. controls (-0.1,0.9) and (0.1,0.9) .. (0.25, 1.1);
\draw[thick] (-0.35,-0.5) .. controls (-0.15,-0.25) and (0.15,-0.25) .. (0.35, -0.5);
\draw[thick] (-0.35,-1.65) .. controls (-0.15,-1.95) and (0.15,-1.95) .. (0.35,-1.65);

\node at (0,0.25) {ff};
\node at (0,2) {bff};
\node at (0,-1.1) {ff$_0$};
\node at (0.65,2.2) {abf};
\node at (-1,0.4) {obif};
\node at (1,0.4) {abif};
\node at (-1.1,-0.9) {obf$_0$};
\node at (1.1,-0.9) {abf$_0$};
\node at (0,-2.2) {tf};
\node at (-2.5,-1.5) {of$_0$};
\node at (2.5,-1.5) {af$_0$};
\node at (-2.4, 1) {of};
\node at (2.4,1) {af};
\end{tikzpicture}
\end{subfigure}
&
\begin{subfigure}[b]{0.3\textwidth}
\centering
\begin{tikzpicture}
\draw[black,thick](-0.4,1)--(-0.4,-2);
\draw[black,thick](0.4,1)--(0.4,-2);
\draw[black,thick](1.5,1)--(1.5,-2);
\draw[black,thick](-1.5,0.5)--(-1.5,-2);
\draw[black,thick](1.5,1)--(3,2);
\draw[black,thick](-1.5,0.5)--(-3,1.5);
\draw[black,thick](-0.8,1.2)--(-2.6,2.4);
\draw[thick](-1.25,0.75)--(-2.84,1.81);
\draw[thick](-1.05,0.95)--(-2.7,2.05);

\draw[thick](-0.4,1) .. controls (-0.15,1.3) and (0.15,1.3) .. (0.4,1);
\draw[thick](0.4,1) .. controls (0.7,1.3) and (1.2,1.3) .. (1.5,1);

\draw[thick](-0.4,1) .. controls (-0.55,1.25) and (-0.7,1.25) .. (-0.8,1.2);
\draw[thick](-1.25,0.75) .. controls (-1.15,0.65) and (-0.95,0.85) .. (-1.05,0.95);
\draw[thick](-1.25,0.75) .. controls (-1.3,0.7) and (-1.3,0.55) .. (-1.3,0.55);
\draw[thick](-1.3,0.55) .. controls (-1.3,0.55) and (-1.5,0.5) .. (-1.5,0.5);
\draw[thick](-1.05,0.95) .. controls (-1,1) and (-0.85,1.00) .. (-0.85,1.00);
\draw[thick](-0.85,1.00) .. controls (-0.85,1.00) and (-0.8,1.2) .. (-0.8,1.2);

\draw[thick](-1.3,0.55) .. controls (-1.1,0.55) and (-1.05,-1) .. (-1.05,-2);
\draw[thick](-0.85,1) .. controls (-0.65,1) and (-0.75,-1) .. (-0.75,-2);

\node at (-2.4,-1.5) {of};
\node at (2.4,-0.5) {af};
\node at (0,-1.3) {bff};
\node at (0.95,-1.5) {abf};
\node at (-2.5,-0.7) {obf};
\node at (-2.5,0.1) {obfx};
\node at (0.95,2.2) {tif};
\node at (-1,2.3) {tof};
\node at (-0.5,1.7) {tofx};

\draw[thick,->] (-2.2,-0.7)--(-1.3,-1.3);
\draw[thick,->] (-2.2,-0.7)--(-0.55,-1.1);
\draw[thick,->] (-2.1,0.1)--(-1,0.3);
\draw[thick,->] (-1.3,2.3)--(-2.4,2);
\draw[thick,->] (-1.3,2.3)--(-2.4,1.2);
\draw[thick,->] (-0.9,1.7)--(-1.5,1.05);

\draw[thick,->] (1.7,-1.4)--(1.7,-2);
\draw[thick,->] (2.5,1.867)--(3,2.2);
\draw[thick,->] (-2.4,2.467)--(-2.9,2.8);

\node at (1.9,-1.8) {$\tau$};
\node at (2.7,2.3) {$\mu$};
\node at (-2.55,2.9) {$\zeta$};

\end{tikzpicture}
\end{subfigure}
\end{tabular}
\caption{The space $X^{\sharp}$, near $t=0$ (left) and near $t=\infty$ (right).}
\label{fig:xsharp}
\end{figure}
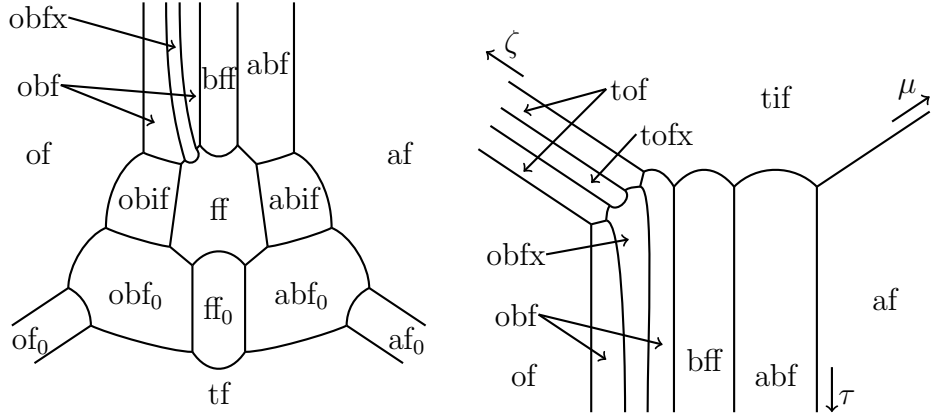

We now prove this. The first term, $\chi(z,\nu)$, is easy, as $\chi(z,\nu)$ is polyhomogeneous conormal on $U$, hence on $U^{\sharp}$, and hence on $X^{\sharp}$ by the pullback theorem and Proposition \ref{prop:bfibxsharp}. The second term was already polyhomogeneous conormal on $X_6$ and is therefore polyhomogeneous conormal on $X_6^{\sharp}$ and then on $X^{\sharp}$ by the pullback theorem. It remains to deal with the exponential term. We will again use Theorem \ref{thm:exppc}, so it suffices to show that the function
\begin{equation}\label{eq:modexp}
h(z,\nu,t)-h_{\min}(z,\nu)=2z\sinh t-2\nu t-2\sqrt{\nu^2-z^2}+2\nu\cosh^{-1}(\nu/z)
\end{equation}
satisfies the hypotheses of that theorem, with all hypersurfaces except the aforementioned five being type 2. Note that \eqref{eq:modexp} is non-negative and is zero exactly when $(z,\nu,t)\in S$.

We must first show that \eqref{eq:modexp} is polyhomogeneous on $X^{\sharp}$, at least near the support of $\chi(z,\nu)$ (that is, away from abf, af, abif, abf$_0$, and af$_0$). But we already know $h(z,\nu,t)$ is polyhomogeneous conormal on $X_6$ and therefore it is polyhomogeneous conormal on $X^{\sharp}$ as well. As for $h_{\min}(z,\nu)$, we claim it is polyhomogeneous conormal on the part of $U$ near the support of $\chi(z,\nu)$, that is, with $\nu/z>2$, away from abe and ae. Then Proposition \ref{prop:bfibx6} and the pullback theorem show that it is polyhomogeneous conormal on $X_6$ and therefore also on $X^{\sharp}$. Indeed, using the coordinate system $(\zeta,\lambda)$, which is valid near oe$\cap$obe (away from fe), we have
\[h_{\min}(z,\nu)=2\zeta^{-3}\lambda^{-3}(\sqrt{1-\lambda^6}+3\log\lambda-\log(1+\sqrt{1-\lambda^6})).\]
This is polyhomogeneous in $(\zeta,\lambda)$ with leading order $-3$ in $\zeta$ (at obe) and $\lambda^{-3}\log\lambda$ in $\lambda$ (at oe). The dominant behavior near the intersection is $6\zeta^{-3}\lambda^{-3}\log\lambda$. On the other hand, near obe$\cap$fe, we have the coordinates $(\mu/\sqrt w,\sqrt w)$, in which
\begin{multline}
h_{\min}(z,\nu)=2(\frac{\mu}{\sqrt w})^{-3}\sqrt{w}^{-3}\Big((\sqrt w^2+1)^{-3}\sqrt{(\sqrt w^{2}+1)^6-1}\\-\log((\sqrt w^2+1)^3+\sqrt{(\sqrt w^{2}+1)^6-1}\Big).\end{multline}
The expression in brackets is a smooth function of $\sqrt w$ down to $\sqrt w=0$. Moreover the leading nonzero term in its Taylor expansion is $-\sqrt 3(\sqrt w)^3$. This cancels the $(\sqrt w)^{-3}$ in front. Thus $h_{\min}(z,\nu)$ is a smooth function of $\sqrt w$ which equals $-\sqrt 3$ at $\sqrt w=0$, times $(\mu/\sqrt w)^{-3}$, and thus is polyhomogeneous conormal near obe$\cap$fe with leading order $-3$ at obe and $0$ at fe. This is enough to show that \eqref{eq:modexp} is polyhomogeneous on $X^{\sharp}$, as desired.

Now we must check the other hypotheses of Theorem \ref{thm:exppc}. First we do this away from obfx, tofx, and tof. Both $h$ and $h_{\min}$ are continuous down to ff, ff$_0$, and tf, and therefore their difference is as well, By a direct calculation, $-h_{\min}$ pulls back to a positive smooth multiple of the following function on $X^{\sharp}$ away from obfx, tof, and tofx:
\[\rho_{of}^{-3}\rho_{of_0}^{-3}(-\log\rho_{of}-\log\rho_{of_0})\rho_{obf}^{-6}\rho_{obif}^{-6}\rho_{obf_0}^{-6}.\]
Each of these faces, of, of$_0$, obf, obif, and obf$_0$, is thus a type 2 bhs for $-h_{\min}$. Moreover, at each of these faces, $-h_{\min}$ has lower leading order than $h$. Therefore each of these faces is a type 2 bhs for $h-h_{\min}$ as well. 

At tif and bff, a reversed version of this argument works. By the analysis in the previous section, each is a type 2 bhs for $h$ (away from tof in the case of tif), and $h_{\min}$ has leading order zero at each, so both tif and bff are type 2 for $h-h_{\min}$.

Finally, we analyze \eqref{eq:modexp} in a neighborhood of the new faces obfx, tofx, and tof. We begin this analysis near obfx$\cap$ff. Before the creation of obfx, we have the coordinates $(t,W,\sqrt{\kappa})$ or alternatively $(t,\hat W,\sqrt{\kappa})$. In the former set, $\nu=t^{-3}\sqrt{\kappa}^{-6}$ and $z=t^{-3}\sqrt{\kappa}^{-6}(Wt^2+1)^{-3}$. Moving to $(t,\hat W,\sqrt{\kappa})$ just means replacing $W$ with $\hat W+t^{-2}((\cosh t)^{1/3}-1)$, which replaces $Wt^2+1$ with $\hat Wt^2+(\cosh t)^{1/3}$. Using the notation $F(\hat W,t):=\hat Wt^2+(\cosh t)^{1/3}$, and after some simplification, we obtain
\begin{multline}h-h_{\min}=2t^{-3}\sqrt{\kappa}^{-6}(F(\hat W,t))^{-3}\Big(\sinh t - (F(\hat W,t))^3t\\-\sqrt{(F(\hat W,t))^6-1}+(F(\hat W,t))^3\cosh^{-1}((F(\hat W,t))^3)\Big).\end{multline}
Rewriting and shortening the notation:
\begin{equation}\label{eq:shortened}\frac 12\sqrt{\kappa}^6(h-h_{\min})=\frac{\sinh t}{t^3F^3}-\frac{1}{t^2}-\frac{\sqrt{F^6-1}}{t^3F^3}+\frac{\cosh^{-1}(F^3)}{t^3}.\end{equation}
Now we must analyze the right-hand side of \eqref{eq:shortened}. It looks to be singular as $t\to 0$, but this is an illusion: 
\begin{proposition} The right-hand side of \eqref{eq:shortened} may be written in a neighborhood of $\hat W=0$ as $\hat W^2\cdot a(t,\hat W)$, where $a(t,\hat W)$ is smooth down to $t=0$ and $a(t,0)$ is positive.
\end{proposition}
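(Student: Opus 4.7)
The plan is to Taylor expand the right-hand side of \eqref{eq:shortened} in $\hat W$ around $\hat W = 0$. Observe first that, as written, the expression depends only on $t$ and $\hat W$ through $F(\hat W, t) = \hat W t^2 + (\cosh t)^{1/3}$, not on $\sqrt{\kappa}$; call it $E(t, \hat W)$. There are three things to verify: that $E$ is smooth in $(t, \hat W)$ down to $t = 0$, that both $E$ and $\partial_{\hat W} E$ vanish identically when $\hat W = 0$, and that the resulting coefficient $a(t, 0) = \frac{1}{2} \partial_{\hat W}^2 E|_{\hat W = 0}$ is smooth down to $t=0$ and positive.

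First, the two vanishings are the easy part and can be done by direct calculation. Substituting $\hat W = 0$ gives $F^3 = \cosh t$, hence $\sqrt{F^6 - 1} = \sinh t$ and $\cosh^{-1}(F^3) = t$ for $t \geq 0$; the four terms of $E(t, 0)$ then cancel in pairs. For the first derivative, using $\partial_{\hat W} F = t^2$ and collecting terms, one computes
\[\partial_{\hat W} E = \frac{3(\sqrt{F^6 - 1} - \sinh t)}{t F^4},\]
which vanishes at $\hat W = 0$ for the same reason. Both facts are also expected on geometric grounds: $h - h_{\min}$ is non-negative and is zero precisely on $S = \{\hat W = 0\}$, so transverse to $S$ it must achieve a minimum there.

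The subtler point is smoothness of $E$ as a function of $(t, \hat W)$ in a neighborhood of $\{t = 0,\ \hat W = 0\}$: each individual term of $E$ has a pole of order up to $3$ at $t = 0$, but these must cancel. One can verify this by Taylor-expanding $\sinh t$, $\cosh t$, and $\cosh^{-1}(1 + \hat W t^2 + \ldots)$ at $t = 0$ and checking that the negative powers of $t$ collect with coefficients that vanish in $\hat W$ thanks to the two cancellations above. More conceptually, since $E = \tfrac{1}{2}\kappa^3(h - h_{\min})$ and $h$ is real-analytic in $(z,\nu,t)$ while $h_{\min}(z,\nu)$ is smooth on the subset $\{\nu > z\}$ and extends smoothly across the diagonal after our blow-ups, it suffices to observe that the curve $\{\hat W = 0\}$ is the lift of $\{t = \cosh^{-1}(\nu/z)\}$ along which $h - h_{\min}$ vanishes to infinite order. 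Once smoothness is in hand, Taylor's theorem gives $E(t, \hat W) = \hat W^2 a(t, \hat W)$ with $a$ smooth.

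Finally, to evaluate $a(t,0)$, I differentiate the displayed expression for $\partial_{\hat W} E$ once more in $\hat W$ and set $\hat W = 0$; the term $-\sinh t$ drops out and, using $\partial_F \sqrt{F^6 - 1}|_{\hat W=0} = 3F^5/\sinh t$, one gets
\[2 a(t, 0) = \partial_{\hat W}^2 E\big|_{\hat W = 0} = \frac{9 t (\cosh t)^{1/3}}{\sinh t}.\]
Since $t/\sinh t$ extends smoothly to $t = 0$ with value $1$, $a(t, 0)$ is smooth down to $t = 0$ and strictly positive, with $a(0, 0) = 9/2$. The main obstacle in this plan is the smoothness of $E$ at $t = 0$; the other pieces are short direct computations.
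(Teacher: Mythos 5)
Your proof follows the same overall structure as the paper's: compute $E(t,0)=0$, $\partial_{\hat W}E|_{\hat W=0}=0$, and $\partial_{\hat W}^2E|_{\hat W=0}=9t(\cosh t)^{1/3}/\sinh t$, then invoke Taylor's theorem. These three calculations are correct and match the paper exactly.

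The genuine gap is the step you flag yourself as ``the main obstacle,'' namely smoothness of $E$ in $(t,\hat W)$ down to $t=0$. You do not actually carry out the Taylor expansions, and the conceptual substitute you offer does not work. First, it is false that $h-h_{\min}$ vanishes to infinite order along $S$; it vanishes to order exactly two transverse to $S$, which is in fact the entire content of the proposition. Infinite-order vanishing would force $h\equiv h_{\min}$. Second, the assertion that $h_{\min}$ ``extends smoothly across the diagonal after our blow-ups'' is precisely what is being established, not something that can be assumed: $h_{\min}$ has a $\sqrt{\nu^2-z^2}$-type singularity at $\nu=z$, and its behavior in the blown-up coordinates is nontrivial. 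The paper handles this head-on by expanding each of the four terms in the right-hand side of \eqref{eq:shortened}: it shows the first two combine into a smooth function because the $1/t^2$ poles cancel, and rewrites the last two as $-t^{-3}\bigl(\sqrt{1-F^{-6}}-\cosh^{-1}(F^3)\bigr)$ and Taylor-expands $\sqrt{1-F^{-6}}$, $\sqrt{F^3-1}$, and $\cosh^{-1}(1+x)/\sqrt{|x|}$ to show those two also combine into something smooth. This computation (or an equivalent one) is required; your sketch of it as ``coefficients that vanish in $\hat W$'' is not the actual cancellation mechanism, since the $1/t^2$ poles cancel for all $\hat W$, not only at $\hat W=0$.
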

\begin{proof}
First we show that the right-hand side of \eqref{eq:shortened} is smooth down to $t=0$. Observe that $F(\hat W,t)$ is a smooth function, even in $t$. Its leading terms at $t=0$ are $1+t^2(\hat W+\frac 16)+O(t^4)$, so $F^{-1}$ is also smooth in a neighborhood of $\hat W=0$ with leading terms $1-t^2(\hat W+\frac 16)+O(t^4)$. The first two terms on the right-hand side of \eqref{eq:shortened} are thus
\[\frac{t+\frac 16t^3+O(t^5)}{t^3}(1-3t^2(\hat W+\frac 16)+O(t^4))-\frac{1}{t^2},\]
which is smooth as the $\frac{1}{t^2}$ terms cancel. The last two terms on the right-hand side of \eqref{eq:shortened} are
\[-t^{-3}(\sqrt{1-F^{-6}}-\cosh^{-1}(F^3))=-t^{-3}(\sqrt{1-F^{-6}}-\sqrt{F^3-1}\frac{\cosh^{-1}(1+(F^3-1))}{\sqrt{F^3-1}}).\]
Now $1-F^{-6}$ is smooth and even in $t$ and equals $t^2(6\hat W+1+O(t^2))$. So $\sqrt{1-F^{-6}}$ is smooth and equals $t\sqrt{6\hat W+1}+O(t^3)$. Similarly, $\sqrt{F^3-1}$ is smooth and equals $t\sqrt{3\hat W+\frac 12}+O(t^3)$. And finally, observe that $\cosh^{-1}(1+x)/\sqrt{|x|}$ has only a removable singularity at $x=0$, and when that singularity is removed it is smooth down to $x=0$ with value equal to $\sqrt 2$ at $x=0$. So
\[\frac{\cosh^{-1}(1+(F^3-1))}{\sqrt{F^3-1}}=\sqrt 2+O(F^3-1)=\sqrt 2+O(t^2).\]
Thus the last two terms are
\[-t^{-3}(t\sqrt{6\hat W+1}+O(t^3)-(t\sqrt{3\hat W+\frac 12}+O(t^3))(\sqrt 2+O(t^2)))=-t^{-3}(O(t^3)),\]
which is smooth. Thus the right-hand side of \eqref{eq:shortened} is indeed smooth down to $t=0$.

Now look at the Taylor series of the right-hand side of \eqref{eq:shortened} in $\hat W$ at $\hat W=0$. It is easy to see through plugging in $\hat W=0$ that we get zero. This is expected, as $h\geq h_{\min}$ with equality only at $\hat W=0$. Similarly, since the first derivative in $\hat W$ exists, it must be zero at $\hat W=0$. If we can show that the \emph{second} derivative in $\hat W$ is positive at $\hat W=0$, we are done.

So take that second derivative of the right-hand side of \eqref{eq:shortened}. The first derivative is
\[-\frac{3\sinh t}{tF^4}-\frac 3{tF^4\sqrt{F^6-1}}+\frac{3F^2}{t\sqrt{F^6-1}}=-\frac{3}{tF^4}(\sinh t - \sqrt{F^6-1}),\]
which as expected is zero at $\hat W=0$. The second derivative is
\[\frac{12t}{F^5}(\sinh t-\sqrt{F^6-1})+9Ft(F^6-1)^{-1/2}.\]
At $\hat W=0$, $F^3=\cosh t$, so $\sqrt{F^6-1}=\sinh t$ and thus this second derivative is just $9t(\cosh t)^{1/3}/\sinh t$. This is positive, as claimed, completing the proof of the proposition.\end{proof}
We have now shown that in the coordinates $(t,\hat W,\sqrt{\kappa})$,
\[h-h_{\min}=2(\frac{\hat W}{\sqrt{\kappa}^3})^2a(t,\hat W),\]
where $a(t,\hat W)$ is smooth and positive near $S$. Creating obfx blows up $\{\hat W=\sqrt{\kappa}=0\}$, cubically with respect to $\{\hat W=0\}$. Near the intersection of obf and obfx, $h-h_{\min}$ is a smooth positive function times a positive power of $\rho_{obf}$, and thus obfx is a type 1 bhs for $h-h_{\min}$ and obf is a type 2 bhs for $h-h_{\min}$, as we require.

Now we must analyze \eqref{eq:modexp} near tof, tofx, and their intersections with obfx. The first thing to do is to use the coordinates $(\tau,\lambda,\sqrt{\zeta})$ which are valid on $X_6^{\sharp}$ near the triple intersection of tif, of, and obf. We have
\[h-h_{\min}=\sqrt{\zeta}^{-6}(\tau^{-6}(1-\tau^{12})+12\lambda^{-3}\log\tau-2\lambda^{-3}\sqrt{1-\lambda^6}+2\lambda^{-3}\cosh^{-1}(\lambda^{-3}))\]
\[=\sqrt{\zeta}^{-6}(\tau^{-6}(1-\tau^{12})+12\lambda^{-3}\log\tau-6\lambda^{-3}\log\lambda-2\lambda^{-3}\sqrt{1-\lambda^6}+2\lambda^{-3}\log(1+\sqrt{1-\lambda^6})).\]

Let us see what happens when we blow up to create tof. Near tof$\cap$of$\cap$obf we have $(\tau,\lambda',\sqrt{\zeta})$, in which $h-h_{\min}$ is
\[\sqrt{\zeta}^{-6}\tau^{-6}(1-\tau^{12}-6(\lambda')^{-3}\log\lambda'-2(\lambda')^{-3}\sqrt{1-(\lambda')^6\tau^{12}}+2(\lambda')^{-3}\log(1+\sqrt{1-(\lambda')^6\tau^{12}})).\]
We already know this is polyhomogeneous conormal. Its leading term at tof$\cap$of$\cap$obf is $-6\sqrt{\zeta}^{-6}\tau^{-6}(\lambda')^{-3}\log\lambda'$, which, note, is positive and blows up at each of the three boundaries. Thus tof, of, and obf are all type 2 bhses for $h-h_{\min}$ near the triple junction.

On the other hand, near tof$\cap$tif$\cap$obf we use the coordinates $(\sigma,\sqrt{\lambda},\sqrt{\zeta})$, in which after again simplifying the cosh term we get
\begin{multline}\label{eq:modexpnears}h-h_{\min}=\sqrt{\zeta}^{-6}\sqrt{\lambda}^{-6}\Big(\sigma^{-6}(1-\sigma^{12}\sqrt{\lambda}^{12})\\+12\log\sigma-2\sqrt{1-\sqrt{\lambda}^{12}}+2\log(1+\sqrt{1-\sqrt{\lambda}^{12}})\Big).\end{multline}
The dominant term near the triple junction is $\sqrt{\zeta}^{-6}\sqrt{\lambda}^{-6}\sigma^{-6}$, and as before, we see that tof, tif, and obf are all type 2 bhses for $h-h_{\min}$ near the triple junction.

We must now examine what occurs near $S$ before we blow up tofx and obfx. We use $(\sigma,\sqrt{\lambda},\sqrt{\zeta})$ and shift to $(\hat\sigma,\sqrt{\lambda},\sqrt{\zeta})$. Writing $g(x)=x^{-2}(1-\sqrt{1-x^{12}})^{1/6}$ for simplicity, noting that $g(x)$ is smooth and equals $2^{-1/6}$ at $x=0$, we see that
\begin{multline}\label{eq:rhsofthis}
\sqrt{\zeta}^6\sqrt{\lambda}^6(h-h_{\min})\\ =\frac{1-(\hat\sigma+g(\sqrt{\lambda}))^{12}\sqrt{\lambda}^{12}}{(\hat\sigma+g(\sqrt{\lambda}))^{6}}+12\log(\hat\sigma+g(\sqrt{\lambda}))-2\sqrt{1-\sqrt{\lambda}^{12}}+2\log(1+\sqrt{1-\sqrt{\lambda}^{12}}).\end{multline}

\begin{proposition} The right-hand side of \eqref{eq:rhsofthis} equals $\hat\sigma^2$ times $b(\hat\sigma,\sqrt{\lambda})$, where $b$ is a smooth, positive function in a neighborhood of $\hat\sigma=0$, uniformly down to $\sqrt{\lambda}=0$.
\end{proposition}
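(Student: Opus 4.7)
The plan is to run a one-variable Morse-lemma argument in $\hat\sigma$, parallel to the $\hat W$ argument in the preceding proposition. The three ingredients needed are (i) smoothness of $F(\hat\sigma,\sqrt\lambda)$ (the right-hand side of \eqref{eq:rhsofthis}) jointly near $(0,0)$, (ii) second-order vanishing at $\hat\sigma = 0$, and (iii) uniform positivity of $\partial^2_{\hat\sigma} F(0,\sqrt\lambda)$ for $\sqrt\lambda$ small. Given these, Taylor's theorem with integral remainder in $\hat\sigma$ gives
\[F(\hat\sigma,\sqrt\lambda) = \hat\sigma^2 \int_0^1 (1-t)\, \partial^2_{\hat\sigma} F(t\hat\sigma,\sqrt\lambda)\, dt =: \hat\sigma^2\, b(\hat\sigma,\sqrt\lambda),\]
and $b$ inherits smoothness from $F$ and strict positivity near $\hat\sigma = 0$ from (iii) by continuity.

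For smoothness, the key observation is that $g(x) = x^{-2}(1 - \sqrt{1-x^{12}})^{1/6}$ extends smoothly across $x = 0$ with $g(0) = 2^{-1/6} > 0$, so $\hat\sigma + g(\sqrt\lambda)$ is smooth and bounded away from zero for $(\hat\sigma, \sqrt\lambda)$ small. Hence both the rational term and $12\log(\hat\sigma + g(\sqrt\lambda))$ are smooth; the remaining summands depend smoothly on $\sqrt\lambda$ through $y := \sqrt{1-\sqrt\lambda^{12}}$.

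For second-order vanishing at $\hat\sigma = 0$, I would argue conceptually rather than compute: on $\{\hat\sigma = 0\} = S$ we have $h = h_{\min}$, hence $F(0,\sqrt\lambda) \equiv 0$; and since $h - h_{\min} \geq 0$ is smooth and vanishes along $\{\hat\sigma = 0\}$, the first-order $\hat\sigma$-derivative must vanish there as well. A direct verification is also available using the algebraic identity $g(\sqrt\lambda)^6 = 1/(1+y)$, itself a consequence of $g^6 \sqrt\lambda^{12} = 1 - y$ together with $1 - y^2 = \sqrt\lambda^{12}$; substituting $\sigma = g(\sqrt\lambda)$ then produces the cancellations $\pm 2y$ and $\pm 2\log(1+y)$ between the first/third and second/fourth summands respectively.

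The main work is item (iii), but it is a short direct calculation. Differentiating twice in $\sigma = \hat\sigma + g(\sqrt\lambda)$ gives $\partial^2_{\hat\sigma} F = -12\sigma^{-2} + 42\sigma^{-8} - 30\sigma^4 \sqrt\lambda^{12}$. Evaluating at $\hat\sigma = 0$ and reducing with the identities $g^6 = 1/(1+y)$ and $g^4\sqrt\lambda^{12} = g^{-2}(1-y)$ simplifies to
\[\partial^2_{\hat\sigma} F(0,\sqrt\lambda) \;=\; \frac{72\, y}{g(\sqrt\lambda)^{8}(1+y)},\]
which at $\sqrt\lambda = 0$ equals $72\cdot 2^{1/3}$ and hence stays bounded below by a positive constant on a neighborhood of $\sqrt\lambda = 0$ by continuity. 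This is the principal obstacle, but it is computational rather than conceptual, and mirrors the analogous computation already carried out in the $\hat W$ proposition.
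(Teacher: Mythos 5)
Your proposal is correct and follows essentially the same route as the paper: show smoothness, verify that the value and first $\hat\sigma$-derivative vanish at $\hat\sigma=0$, and compute the second derivative there (your $\tfrac{72y}{g^8(1+y)}$ agrees with the paper's $72y/g^2$ via $g^6=1/(1+y)$). Your conceptual shortcut for first-order vanishing via $h-h_{\min}\geq 0$ with equality on $S$ is a pleasant alternative to the paper's direct computation, but the substance of the argument is the same.
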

\begin{proof}It is actually immediate from \eqref{eq:modexpnears} that the right-hand side of \eqref{eq:rhsofthis} is smooth in $(\sigma,\sqrt\lambda)$ away from $\sigma=0$, and thus is smooth in $(\hat\sigma,\sqrt\lambda)$ near $\hat\sigma=0$. As expected, it is zero when $\hat\sigma=0$ (that is, on $S$); the logarithmic terms cancel and so do the non-logarithmic terms. The first derivative of the right-hand side of \eqref{eq:rhsofthis} with respect to $\hat\sigma$ is:
\[-6(\hat\sigma+g(\sqrt{\lambda}))^{-7}-6\sqrt{\lambda}^{12}(\hat\sigma+g(\sqrt{\lambda}))^{5}+12(\hat\sigma+g(\sqrt{\lambda}))^{-1},\]
which does indeed equal zero when $\hat\sigma=0$, as expected. The second derivative at $\hat\sigma=0$ is, of course,
\[42(g(\sqrt{\lambda}))^{-8}-30\sqrt{\lambda}^{12}(g(\sqrt{\lambda}))^4-12(g(\sqrt{\lambda}))^{-2}.\]
This simplifies after some algebra to
\[(g(\sqrt{\lambda}))^{-2}(72\sqrt{1-\sqrt{\lambda}^{12}}).\]
This expression is smooth and positive, equal to $72\cdot 2^{1/3}$ at $\sqrt{\lambda}=0$, which completes the proof.
\end{proof}

With this proposition, we have shown that
\[h-h_{\min}=\sqrt{\zeta}^{-6}\sqrt{\lambda}^{-6}\hat\sigma^2 b(\hat\sigma,\sqrt{\lambda}).\]
Creating obfx is a blow-up of $\{\hat\sigma=\sqrt{\zeta}=0\}$, cubic with respect to $\{\hat\sigma=0\}$. Near the triple intersection of obfx, obf, and tof, $\sqrt\zeta/\sigma^{1/3}$ is a boundary defining function for obf, and so $h-h_{\min}$ pulls back to a positive smooth multiple of $\rho_{obf}^{-6}\rho_{tof}^{-6}$. This shows that tof and obf are type 2 bhses, and that obfx is a type 1 bhs, for $h-h_{\min}$ near this triple intersection. Finally, near obfx$\cap$tof$\cap$S, let $\sigma'=\hat\sigma/\sqrt{\zeta}^{3}$, and we can use the coordinates $(\sigma',\sqrt{\zeta},\sqrt{\lambda})$, in which $h-h_{\min}$ is a positive smooth multiple of $(\sigma')^2\sqrt{\lambda}^{-6}$. Creating tofx blows up $\sigma'=\sqrt{\lambda}=0$, cubically with respect to $\{\sigma'=0\}$. So near the triple intersection obfx$\cap$tofx$\cap$tof, $h-h_{\min}$ is a positive smooth multiple of $\rho_{tof}^{-6}$. Thus tof is a type 2 bhs there and obfx and tofx are type 1.

This, finally, is enough to show that \eqref{eq:modexp} satisfies the hypotheses of Theorem \ref{thm:exppc}, which completes the proof of Proposition \ref{prop:keyprop}. And Melrose's pushforward theorem, together with Proposition \ref{prop:bfibxsharp}, then completes the proof of Theorem \ref{thm:hardpart}.

Finally, we must analyze the portion corresponding to the remainder term $R_{\nu}^2(z)$. By Proposition \ref{prop:remonq} we know that $R_{\nu}^2(z)$ is polyhomogeneous on all of $Q$. As part of the proof of Proposition \ref{prop:keyprop}, we showed that $h_{\min}$ is polyhomogeneous conormal on $U_o$. It is easy to see that oe and obe are type 2 boundary hypersurfaces for $-h_{\min}$, and thus $e^{2h_{\min}}$ is polyhomogeneous conormal on $U_o$ by Theorem \ref{thm:exppc}. Thus $R_{\nu}^2(z)e^{2h_{\min}}$ is polyhomogeneous conormal on $U_o$. Combining this with Theorem \ref{thm:hardpart} proves Theorem \ref{thm:modsquaredrefined}.


\section{Asymptotics of the Bessel modulus and its inverse}\label{sec:modandinv}

In this section, we use our understanding of the asymptotics of $M^2_{\nu}(z)$ to analyze the functions $M_{\nu}(z)$ and $M_{\nu}^{-2}(z)$. In particular, we prove Theorem \ref{thm:mainmodulus} concerning $M_{\nu}(z)$, and we prove a proposition about $M_{\nu}^{-2}(z)$ that we will use in the next section to prove Theorem \ref{thm:mainalpha}. 

Both $M_{\nu}(z)$ and $M_{\nu}^{-2}(z)$ are powers of $M^2_{\nu}(z)$. In the appendix, we prove Theorem \ref{thm:appendixproperpowers}, which gives criteria (``proper" and ``non-logarithmic at leading order") for a polyhomogeneous conormal function to have polyhomogeneous conormal powers. We will take advantage of this by proving the following proposition:
\begin{proposition}\label{prop:idleading} The Bessel modulus-squared $M^2_{\nu}(z)$ has the following properties:

\begin{enumerate}
\item On the support of $1-\chi(z,\nu)$, $M^2_{\nu}(z)$ is proper and non-logarithmic at leading order, with leading orders $2$ at fe, 3 at abe, and 3 at ae.
\item On the support of $\chi(z,\nu)$, $M_{\nu}^2(z)(T(z,\nu))^{-1}$ is proper and non-logarithmic at leading order, with leading orders 2 at fe, 3 at obe, and 3 at oe.
\end{enumerate}
\end{proposition}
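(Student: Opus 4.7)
Given Theorems~\ref{thm:modsquared} and \ref{thm:modsquaredrefined}, polyhomogeneity of $M_\nu^2$ on $U_a$ and of $M_\nu^2 T^{-1}$ on $U_o$ is already established. What remains for Proposition~\ref{prop:idleading} is to identify the leading index at each of the relevant boundary hypersurfaces, to verify that the leading coefficient is nowhere vanishing on that face, and to check that no $\log$ factor appears at leading order. The plan is to read off the leading term in each regime from the classical Bessel asymptotics already cited in the paper, and to invoke uniqueness of polyhomogeneous expansions to conclude that these must be the leading terms of the polyhomogeneous expansion.

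For Part~(1), at ae the classical large-argument expansion \cite[10.18.17]{dlmf} gives $M_\nu^2(z)=\frac{2}{\pi}\zeta^3(1+O(\zeta^6))$; at abe, squaring the first term of Debye's expansion \cite[10.19.6]{dlmf} produces \eqref{eq:mabe}, whose leading coefficient $\frac{2\eta^3}{\pi\sqrt{1-\eta^6}}$ is positive and smooth for $\eta\in[0,1)$; at fe, squaring \eqref{eq:mfe} gives $M_\nu^2\sim 2^{2/3}\mu^2(\Ai^2+\Bi^2)(-2^{1/3}a)$, whose leading coefficient is strictly positive for all real $a$ since $\Ai^2+\Bi^2>0$. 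In each case the leading coefficient is smooth, strictly positive on the corresponding open face, and carries no logarithm, which together with polyhomogeneity forces the claimed leading order and ``proper'' / ``non-logarithmic at leading order'' properties.

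Part~(2) follows the same template, applied to $M_\nu^2 T^{-1}$. In the regime $\nu>z$, squaring the classical Debye expansions of $J_\nu$ and $Y_\nu$ (in which $Y_\nu$ dominates exponentially) yields $M_\nu^2\sim\frac{2}{\pi\sqrt{\nu^2-z^2}}\,T(z,\nu)$, so that $M_\nu^2 T^{-1}$ has leading coefficient $\frac{2}{\pi}(\nu^2-z^2)^{-1/2}$. In $(\zeta,\lambda)$ coordinates this becomes $\frac{2}{\pi}\zeta^3\lambda^3(1-\lambda^6)^{-1/2}$, smooth and positive down to $\lambda=0$, which produces leading index $3$ at both obe (bdf $\zeta$) and oe (bdf $\lambda$) without logarithms.

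The main obstacle is consistency at the corners fe$\cap$abe and fe$\cap$obe, where both the ``transition'' and the ``Debye'' leading terms apply simultaneously and must agree in the corner coordinates $(\hat w_{\pm},\hat\mu_{\pm})$. Substituting $\mu=\hat w_{\pm}\hat\mu_{\pm}$ and $\eta=1\mp\hat w_{\pm}^2$ (with the analogous substitution in $(\nu^2-z^2)^{-1/2}$ on the obe side) and using $\sqrt{1-\eta^6}=\sqrt{6}\,\hat w_{\pm}+O(\hat w_{\pm}^3)$, each Debye leading term reduces to a positive smooth multiple of $\hat w_{\pm}^2\hat\mu_{\pm}^3$ near the corner, compatible with leading index $3$ at abe/obe and $2$ at fe. The same joint leading term can be recovered from \eqref{eq:mfe} via the asymptotics of $\Ai^2+\Bi^2$ as $a\to\pm\infty$, providing an independent cross-check. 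Since no logarithms appear at leading order and positivity of the leading coefficient is preserved throughout the faces and their corners, the proposition follows.
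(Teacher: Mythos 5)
Your proposal follows essentially the same route as the paper: read off the leading coefficient at each open face from the classical Bessel asymptotics (DLMF 10.18.17 at ae, Debye at abe/obe, the Airy-modulus formula at fe), note it is smooth, strictly positive and logarithm-free, and then verify corner compatibility by rewriting the leading term in the corner projective coordinates (including Airy-modulus asymptotics from DLMF 9.8.20 and the Puiseux/Taylor expansion $\sqrt{1-\eta^6}\sim\sqrt{6}\hat w_{\pm}$). One small imprecision: you state that the coefficient $\frac{2\eta^3}{\pi\sqrt{1-\eta^6}}$ is positive for $\eta\in[0,1)$, but it vanishes at $\eta=0$; this vanishing is of course exactly what encodes the leading index $3$ at ae, and your subsequent corner check recovers this, so the argument is intact.
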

\begin{proof} We do this by analyzing the leading order terms of the expansion of $M^2_{\nu}(z)$ at each boundary hypersurface. To do this we use the previously known asymptotics of Bessel functions.

At fe, away from obe and abe, we use the coordinates
\[(w':=\frac{w}{\mu^2},\mu).\]
As discussed in Section 2, $w'$ is $\frac{\nu-z}{\nu^{1/3}}$ times a function which is smooth and equals 1 on fe. We also know from \cite[10.19.8]{dlmf} that for any fixed $a$,
\[M_{\nu}^2(\nu+a\nu^{1/3})\sim 2^{2/3}\nu^{-2/3}(Ai^2(-2^{1/3}a)+Bi^2(-2^{1/3}a)),\]
where $Ai$ and $Bi$ are the Airy functions.
Thus in the coordinates $(w',\mu)$, the leading order term of the polyhomogeneous expansion of $M_{\nu}^2(z)$ at fe is
\[2^{2/3}\mu^2(Ai^2+Bi^2)(2^{1/3}w').\]

Now switch to the coordinates $(\frac{\mu}{\sqrt{-w}},\sqrt{-w})$ which are valid near abe$\cap$fe. This leading order term is
\[2^{2/3}(\frac{\mu}{\sqrt{-w}})^{2}(\sqrt{-w})^2(Ai^2+Bi^2)(-\frac{2^{1/3}}{(\mu/\sqrt{-w})^2}).\]
The expression $Ai^2+Bi^2$ is the Airy modulus-squared, and from \cite[9.8.20]{dlmfch9} we see that as $\mu/\sqrt{-w}\to 0$, the leading order at abe of the leading order term at fe is
\[2^{2/3}(\frac{\mu}{\sqrt{-w}})^2(\sqrt{-w})^2\cdot\frac{1}{\pi}(\frac{2^{1/3}}{(\mu/\sqrt{-w})^2})^{-1/2}=\frac{\sqrt 2}{\pi}(\frac{\mu}{\sqrt{-w}})^3(\sqrt{-w})^2.\]
At abe, we can see from Debye's expansions \cite[10.19.6]{dlmf} that the leading order of $M^2_{\nu}(z)$ is at order $\nu^{-1}=\mu^{-3}$, and therefore the leading order there is indeed 3. Thus $M^2_{\nu}(z)$ is $\frac{\sqrt 2}{\pi}\rho_{fe}^2\rho_{abe}^3$ and therefore is proper, and non-logarithmic at leading order, near fe$\cap$abe.

We must also examine the corner abe$\cap$ae and can do this by looking at either Debye's expansions or the classical large-argument expansions and seeing what happens to their coefficients. The classical large-argument expansion for the modulus-squared $M^2_{\nu}(z)$ is given by \cite[10.18.17]{dlmf} and its leading term is $\frac{2}{\pi z}$, which is $\frac{2}{\pi}\zeta^{3}$. Coordinates near abe$\cap$ae are $(\mu,\frac{\zeta}{\mu})$, so the leading term at abe of the leading term at ae is
\[\frac{2}{\pi}(\frac{\zeta}{\mu})^3\mu^3,\]
which is $\frac{2}{\pi}\rho_{abe}^3\rho_{ae}^3$. Thus, again, $M^2_{\nu}(z)$ is proper and non-logarithmic at leading order here, proving part (1) of Proposition \ref{prop:idleading}.

For part (2), we must analyze the leading orders in the asymptotic expansions of the polyhomogeneous function
\begin{equation}\label{eq:fractionmod}
M^2_{\nu}(z)(T(z,\nu))^{-1}=M^2_{\nu}(z)\cdot e^{2\sqrt{\nu^2-z^2}-2\nu\cosh^{-1}(\nu/z)}
\end{equation}
at fe, obe, and oe. By Debye's expansions \cite[10.19.3]{dlmf} we know that the leading term in the expansion of $M^2_{\nu}(\nu\sech\alpha)$, for $\alpha>0$ fixed, is
\[\frac{2e^{2\nu(\alpha-\tanh\alpha)}}{\pi\nu\tanh\alpha}.\]
The relationship between the coordinates is that $\alpha=\cosh^{-1}(\nu/z)$. So $\tanh\alpha=\sqrt{1-(z/\nu)^2}$, and the exponent
\[2\nu(\alpha-\tanh\alpha)=2\nu\cosh^{-1}(\nu/z)-2\sqrt{\nu^2-z^2}=-h_{\min}(z,\nu).\]
Thus the exponential in Debye's expansions precisely matches our exponential $e^{-h_{\min}(z,\nu)}$ and as a result, the leading order term in the asymptotic expansion of \eqref{eq:fractionmod} at obe is
\[\frac{2}{\pi\nu\tanh\alpha}=\frac{2}{\pi\sqrt{\nu^2-z^2}}.\]
This allows us to interpret Debye's expansions themselves as polyhomogeneous expansions at obe (after dividing out the exponential term). 

Near the corner obe$\cap$oe we use the coordinates $(\zeta,\mu/\zeta)$, in which the leading term at obe is
\[\frac{2}{\pi}\zeta^{3}(\mu/\zeta)^3\frac{1}{\sqrt{1-(\mu/\zeta)^6}}.\]
Thus near that corner, $M^2_{\nu}(z)=\frac{2}{\pi}\rho_{oe}^3\rho_{obe}^3$ and is thus proper and non-logarithmic there. On the other hand, near obe$\cap$fe we use the coordinates $(\frac{\mu}{\sqrt w},\sqrt{w})$, in which the leading term at obe is
\[\frac{2}{\pi}(\frac{\mu}{\sqrt w})^3(\sqrt w)^3\frac{1}{\sqrt{1-(\sqrt{w}^2+1)^{-6}}}.\]
Performing a Puiseux series expansion of that last term shows that it equals $\sqrt{w}^{-1}$ times a smooth function in $\sqrt{w}$ which is $\frac{1}{\sqrt 6}$ at $\sqrt{w}=0$. Thus the leading term at obe, in these coordinates, has leading term at fe given by
\[\frac{2}{\pi\sqrt{6}}(\frac{\mu}{\sqrt w})^3(\sqrt w)^2.\]
This is again proper and non-logarithmic, completing the proof of Proposition \ref{prop:idleading}.
\end{proof}
Theorem \ref{thm:appendixproperpowers} now gives some corollaries:
\begin{corollary} The Bessel modulus $M_{\nu}(z)$ has the following properties:
\begin{enumerate}
\item On the support of $1-\chi(z,\nu)$, $M_{\nu}(z)$ is polyhomogeneous conormal with leading order 1 at fe, $3/2$ at abe, and $3/2$ at ae.
\item On the support of $\chi(z,nu)$, $M_{\nu}(z)$ is $\sqrt{T(z,\nu)}$ times a polyhomogeneous conormal function with leading orders 1 at fe, $3/2$ at obe, and $3/2$ at oe.
\end{enumerate}
\end{corollary}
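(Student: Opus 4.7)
The plan is straightforward: both parts reduce to a direct application of Theorem \ref{thm:appendixproperpowers} (criteria for fractional powers of polyhomogeneous conormal functions to be polyhomogeneous conormal) to the leading-order data just established in Proposition \ref{prop:idleading}. The underlying identity is $M_{\nu}(z) = (M_{\nu}^2(z))^{1/2}$, taking the \emph{positive} square root, which is well-defined since the Bessel modulus is a positive function.

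For part (1), on the support of $1-\chi(z,\nu)$, I would apply Theorem \ref{thm:appendixproperpowers} to $M_{\nu}^2(z)$ with exponent $1/2$. Proposition \ref{prop:idleading}(1) verifies the hypotheses: $M_{\nu}^2(z)$ is proper and non-logarithmic at leading order with leading orders $2$ at fe, $3$ at abe, and $3$ at ae. The theorem then yields that $M_{\nu}(z)$ is polyhomogeneous conormal, with leading orders halved, namely $1$ at fe, $3/2$ at abe, and $3/2$ at ae, as claimed.

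For part (2), the same strategy applies, but I first pull out the exponential factor. Theorem \ref{thm:modsquaredrefined} gives that the product $M_{\nu}^2(z) T(z,\nu)^{-1}$ is polyhomogeneous conormal on $U_o$, and Proposition \ref{prop:idleading}(2) verifies that it is proper and non-logarithmic at leading order with leading orders $2$ at fe, $3$ at obe, and $3$ at oe. Theorem \ref{thm:appendixproperpowers} with exponent $1/2$ therefore shows that $(M_{\nu}^2(z) T(z,\nu)^{-1})^{1/2} = M_{\nu}(z)/\sqrt{T(z,\nu)}$ is polyhomogeneous conormal with leading orders $1$ at fe, $3/2$ at obe, and $3/2$ at oe. Rearranging gives $M_{\nu}(z) = \sqrt{T(z,\nu)}$ times this polyhomogeneous conormal function, which is the desired form. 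There is essentially no obstacle here, since Theorem \ref{thm:appendixproperpowers} was designed precisely for this application and all the substantive work was done in establishing Proposition \ref{prop:idleading}.
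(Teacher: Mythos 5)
Your proof is correct and matches the paper's (implicit) argument exactly: the paper presents this corollary as following directly from Theorem \ref{thm:appendixproperpowers} applied to the leading-order data in Proposition \ref{prop:idleading}, which is precisely the application you spell out. No further comment is needed.
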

\noindent Note that this proves Theorem \ref{thm:mainmodulus}.

The following corollary takes only slightly more work:
\begin{corollary} The reciprocal $M_{\nu}^{-2}(z)$ has the following properties:
\begin{enumerate}
\item On the support of $1-\chi(z,\nu)$, $M_{\nu}^{-2}(z)$ is polyhomogeneous conormal with leading order $-2$ at fe, $-3$ at abe, and $-3$ at ae.
\item On the support of $\chi(z,\nu)$, $M_{\nu}^{-2}(z)$ is $T(z,\nu)^{-1}$ times a polyhomogeneous conormal function with leading orders $-2$ at fe, $-3$ at obe, and $-3$ at oe.
\item On all of $U$, the function
\[\frac{1}{zM^2_{\nu}(z)}\]
is polyhomogeneous conormal, with leading order 0 at ae, leading order 0 at abe, leading order 1 at fe, and empty index sets at obe and oe.
\end{enumerate}
\end{corollary}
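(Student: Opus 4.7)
Parts~(1) and~(2) are immediate from Proposition~\ref{prop:idleading} combined with Theorem~\ref{thm:appendixproperpowers}, which guarantees polyhomogeneity of powers of proper, non-logarithmic-at-leading-order polyhomogeneous conormal functions. For part~(1), apply the theorem with exponent $-1$ to $M_{\nu}^2(z)$ on the support of $1-\chi$, using Proposition~\ref{prop:idleading}(1); the leading orders of $M_{\nu}^2(z)$ negate to give the claim. For part~(2), apply the same theorem to $M_{\nu}^2(z)T(z,\nu)^{-1}$ on the support of $\chi$, using Proposition~\ref{prop:idleading}(2); this yields that $T(z,\nu)/M_{\nu}^2(z)$ is polyhomogeneous with leading orders $-2,-3,-3$ at fe, obe, oe, and multiplying by $T(z,\nu)^{-1}$ then gives the form claimed in~(2).

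Part~(3) is established locally by combining the two descriptions from parts~(1) and~(2) with the elementary fact that $z$, as a function on $Q$, has leading order $-3$ at each of the faces fe, abe, ae, obe, oe. This can be read off from the projective coordinates: $z=\zeta^{-3}$ at ae; $z=\mu^{-3}\eta^{-3}$ near the edge abe$\cap$ae; $z=\mu^{-3}+a\mu^{-1}$ in the interior of fe; and analogously $z=\hat\mu_+^{-3}\hat w_+^{-3}(1+\hat w_+^2)^{-3}$ near obe$\cap$fe. On the support of $1-\chi$, which lies inside $U_a$, the function $1/(zM_{\nu}^2(z))$ is thus $(1/z)$ times the polyhomogeneous function from part~(1), so the leading orders at ae, abe, fe shift by $+3$ to yield $0,0,1$ respectively, matching the claim.

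On the support of $\chi$, which lies inside $U_o$, we use part~(2) to write
\[\frac{1}{zM_{\nu}^2(z)}=\frac{T(z,\nu)^{-1}}{z}\cdot G(z,\nu),\]
where $G$ is polyhomogeneous with leading orders $-2,-3,-3$ at fe, obe, oe. The polyhomogeneous factor $G(z,\nu)/z$ has leading orders $1,0,0$ at fe, obe, oe by the same shifting argument. The main content is then the claim that $T(z,\nu)^{-1}=e^{h_{\min}(z,\nu)}$ is polyhomogeneous with empty index sets at oe and obe. For this, apply Theorem~\ref{thm:exppc} to $-h_{\min}$: from the proof of Theorem~\ref{thm:hardpart} we know that $h_{\min}$ is polyhomogeneous on $U_o$, with dominant behavior $6\zeta^{-3}\lambda^{-3}\log\lambda$ near the corner oe$\cap$obe, leading order $-3$ at obe, and leading order $0$ at fe. Hence $-h_{\min}$ is a positive polyhomogeneous function that is type~2 at oe and obe and type~1 at fe, so Theorem~\ref{thm:exppc} provides the required empty index sets for $T^{-1}=e^{-(-h_{\min})}$ at oe and obe, while $T^{-1}$ remains a smooth, positive unit at fe. Multiplying by $G/z$ preserves the empty index sets at oe and obe and contributes leading order $1$ at fe, completing the local description on $\operatorname{supp}\chi$.

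The main obstacle is verifying the type-2 hypothesis of Theorem~\ref{thm:exppc} for $-h_{\min}$ at the corner oe$\cap$obe, where the dominant coefficient is $-6\zeta^{-3}\lambda^{-3}\log\lambda$ rather than a clean power; one must confirm that this logarithmic-times-power growth is still sufficient for $e^{-(-h_{\min})}$ to vanish to infinite order conormally. Once this is in hand, the two local descriptions on $\operatorname{supp}\chi$ and $\operatorname{supp}(1-\chi)$ necessarily agree on their overlap, which lies in the interior of $U$ away from all five boundary hypersurfaces, so polyhomogeneity patches together globally on $U$ with the stated indices.
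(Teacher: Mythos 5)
Your proof is correct and follows essentially the same route as the paper: parts (1) and (2) from Proposition~\ref{prop:idleading} together with Theorem~\ref{thm:appendixproperpowers}, and part (3) by multiplying by $z^{-1}$ and applying Theorem~\ref{thm:exppc} to $-h_{\min}$ to conclude $T^{-1}=e^{h_{\min}}$ has empty index sets at oe and obe (you are in fact more careful than the paper about the sign of $h_{\min}$). The ``main obstacle'' you flag is not actually one: the type-2 condition only requires a lower bound $u\geq c\rho^{-\epsilon}$ for some $\epsilon>0$, and the extra factor $|\log\rho_{oe}|$ in the dominant term of $-h_{\min}$ only strengthens that bound, so any $\epsilon<3$ suffices.
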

\begin{proof} Parts (1) and (2) are immediate from Theorem \ref{thm:appendixproperpowers}. Since $z^{-1}$ is polyhomogeneous conormal on $U$ with leading orders 3 at ae, abe, fe, and obe, part (3) follows immediately from the claim that $T(z,\nu)^{-1}$ is polyhomogeneous conormal on the support of $\chi(z,\nu)$, with empty index sets at abe and ae.

To prove that claim we use Theorem \ref{thm:exppc}. We  have $T(z,\nu)^{-1}=\exp[h_{\min}(z,\nu)]$. We showed in the analysis of the remainder term $R_{\nu}(z)$ in the prior section that $h_{\min}(z,\nu)$ 
is polyhomogeneous conormal on the support of $\chi(z,\nu)$. Moreover, its leading order behavior near obe$\cap$oe is $6\rho_{obe}^{-3}\rho_{oe}^{-3}\log\rho_{oe}$ (note the logarithm is negative) and its leading order behavior near obe$\cap$fe is $-\sqrt{3}\rho_{obe}^{-3}$. Thus obe and oe are type 2 bhses for $h_{\min}(z,\nu)$ and fe is a type 1 bhs. By Theorem \ref{thm:exppc}, the exponential $T(z,\nu)^{-1}=\exp[h_{\min}(z,\nu)]$ has the desired properties. \end{proof}

\begin{remark} Part (3) tells us that $z^{-1}M_{\nu}^{-2}(z)$ decays to infinite order at obe and oe -- that is, faster than any polynomial. This infinite-order decay is actually exponential decay and can be characterized more precisely by using part (2). In theory, this could be used to sharpen our estimates on $\theta_{\nu}(z)$ in the next section, but the extra work needed to do so will be deferred to a further paper.
\end{remark}


\section{Asymptotics of the Bessel phase}

The proof of Theorem \ref{thm:mainalpha} is obtained by analysis of \eqref{eq:phaseformula}. We integrate $u^{-1}M_{\nu}(u)^{-2}$ from $0$ to $z$. We break this integral up at $u=1$. For larger $u$ we understand $u^{-1}M_{\nu}(u)^{-2}$ and can use more geometric microlocal analysis. For $u<1$, though, we do not know about polyhomogeneity of $M_{\nu}(u)^{-2}$, so we must analyze the small $u$ portion of the integral, namely
\begin{equation}\label{eq:alphasmall}
\int_0^1\frac{1}{uM_{\nu}(u)^2}\, du,
\end{equation}
through other means. As we will show, this contribution is negligible.

\begin{theorem}\label{thm:smallalpha} The expression \eqref{eq:alphasmall} is polyhomogeneous conormal and decaying to infinite order in $\nu^{-1}$ as $\nu\to\infty$. \end{theorem}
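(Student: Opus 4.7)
The plan is to exploit the super-exponential growth of $M_\nu^2(u)$ in $\nu$ for $u\in(0,1]$, which forces the integrand and all of its $\nu$-derivatives to decay faster than any polynomial in $\nu^{-1}$.

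First I would establish a uniform lower bound. Using $M_\nu^2(u)\geq Y_\nu^2(u)$ together with the classical small-argument expansion $Y_\nu(u)=-\tfrac{\Gamma(\nu)}{\pi}(2/u)^\nu(1+O(1/\nu))$, which is uniform in $u\in(0,1]$, one obtains a constant $c>0$ and a $\nu_0$ such that
\[
M_\nu^2(u)\geq c\,\Gamma(\nu)^2(2/u)^{2\nu},\qquad \nu\geq\nu_0,\ u\in(0,1].
\]
Integrating gives
\[
\int_0^1\frac{du}{uM_\nu^2(u)}\leq \frac{1}{c\,\Gamma(\nu)^2\,2^{2\nu}}\int_0^1 u^{2\nu-1}\,du=\frac{1}{2c\,\nu\,\Gamma(\nu)^2\,2^{2\nu}},
\]
which by Stirling is $O(\nu^{-N})$ for every $N$.

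For $\nu$-derivatives I would differentiate Nicholson's formula \eqref{eq:nicholson} under the integral sign, obtaining that $\partial_\nu^j M_\nu^2(u)$ is bounded, uniformly for $u\in(0,1]$, by $C_j\Gamma(\nu)^2(2/u)^{2\nu}\bigl(\log(2/u)+\log\nu\bigr)^j$; the same bound can be read off from term-by-term $\nu$-differentiation of the small-argument series for $Y_\nu(u)^2$. Combined with the lower bound above and the Fa\`a di Bruno formula for $\partial_\nu^k(1/M_\nu^2)$,
\[
\left|\partial_\nu^k\frac{1}{uM_\nu^2(u)}\right|\leq C_k\,u^{2\nu-1}\,2^{-2\nu}\,\Gamma(\nu)^{-2}\bigl(\log(2/u)+\log\nu\bigr)^k.
\]
The elementary estimate $\int_0^1 u^{2\nu-1}(\log(2/u))^i\,du=O(\nu^{-1})$ then justifies differentiating the outer integral and yields $\partial_\nu^k I(\nu)=O(\nu^{-N})$ for every $k$ and $N$, where $I(\nu)$ denotes \eqref{eq:alphasmall}.

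Finally, since $|(\nu\partial_\nu)^k I(\nu)|\leq \nu^k|\partial_\nu^k I(\nu)|=O(\nu^{-N})$ for every $k,N$, the function $I$ extends smoothly by zero to $\mu=\nu^{-1/3}=0$ with every Taylor coefficient vanishing. This is precisely the statement that $I$ is polyhomogeneous conormal with empty index set at every face of $U$ meeting $\{\nu=\infty\}$. The main obstacle is the bookkeeping in the third step: verifying that the logarithmic factors produced by $\nu$-differentiating $\Gamma(\nu)^2(2/u)^{2\nu}$ integrate harmlessly against $u^{2\nu-1}$, which is handled by the elementary integration estimate quoted above.
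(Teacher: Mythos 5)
Your plan follows essentially the same strategy as the paper's proof of Theorem~\ref{thm:smallalpha}: establish a super-polynomial lower bound on $M_\nu^2(u)$ for $u\in(0,1]$, control $\partial_\nu^j M_\nu^2$ relative to $M_\nu^2$, and combine via Fa\`a di Bruno to conclude that all $\nu$-derivatives of the integral decay to infinite order. This is exactly the content of the paper's Lemma~\ref{lem:expdecayest}, which works entirely from Nicholson's formula. Your proposal replaces the Nicholson-based lower bound with one coming from $M_\nu^2\geq Y_\nu^2$ and the small-argument expansion of $Y_\nu$. This does give a sharper bound, but the claim that the expansion is ``uniform in $u\in(0,1]$'' as $\nu\to\infty$ is a genuine two-parameter asymptotic statement: the DLMF formula is a small-argument expansion for fixed $\nu$, and showing the leading term dominates the remaining $\sim\nu$ terms of the series uniformly for all $u\leq 1$ and all large $\nu$ requires an argument you have not supplied, whereas the paper's weaker bound $M_\nu^2(z)\geq c z^{-1}e^\nu$ has a short self-contained Nicholson proof.

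The more substantial gap is the derivative estimate. You assert $\partial_\nu^j M_\nu^2(u)\leq C_j\Gamma(\nu)^2(2/u)^{2\nu}(\log(2/u)+\log\nu)^j$ and offer two suggested routes (differentiating Nicholson's formula under the integral sign, or term-by-term differentiation of the small-argument series) without carrying either out. Differentiating Nicholson produces $\int_0^\infty K_0(2u\sinh t)(2t)^j\cosh(2\nu t)\,dt$ (up to $\cosh/\sinh$ bookkeeping), and bounding this against $M_\nu^2$ is precisely the crux of Lemma~\ref{lem:expdecayest}(1) in the paper: it takes nearly a page of careful integral comparisons via the substitution $w=ze^t$, culminating in an inductive proof that $\Gamma^{(k)}(x)/\Gamma(x)\leq C_k x^k$. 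Your sharper factor $(\log\nu)^j$ in place of the paper's $\nu^j$ would require the tighter estimate $\Gamma^{(k)}(x)/\Gamma(x)\lesssim(\log x)^k$ or an equivalent; the paper is content with the weaker polynomial bound because the exponential decay of $1/M_\nu^2$ swamps it anyway. As written, this step is asserted rather than proved, and it is the heart of the lemma. Everything downstream -- the Fa\`a di Bruno application, the elementary integral $\int_0^1 u^{2\nu-1}(\log(2/u))^i\,du=O(\nu^{-1})$, and the passage from vanishing $b$-derivatives to polyhomogeneity with empty index set -- is fine, though note that $(\nu\partial_\nu)^k$ expands to $\sum_{j\leq k}c_{kj}\nu^j\partial_\nu^j$, so your displayed inequality $|(\nu\partial_\nu)^kI|\leq\nu^k|\partial_\nu^kI|$ is not literally correct, only the conclusion drawn from it.
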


To prove this theorem we need the following estimates on $M_{\nu}^2(z)$:
\begin{lemma}\label{lem:expdecayest} For $\nu\geq 2$ and $z\leq 1$, the following hold:
\begin{enumerate}
\item For each $k$, there is a constant $C_k$ depending only on $k$ so that
\[\frac{\partial^k}{\partial\nu^k}(M_{\nu}^2(z))\leq C_k(\nu+|\log z|)^kM_{\nu}^2(z);\]
\item There is a positive constant $c$ such that $M_{\nu}^2(z)\geq cz^{-1}e^{\nu}$.
\end{enumerate}
\end{lemma}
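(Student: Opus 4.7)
The plan is to prove both inequalities by direct manipulation of Nicholson's formula \eqref{eq:nicholson}, doing part (2) first and then feeding it into the tail estimate for part (1).

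For part (2) I would restrict the Nicholson integral to a short $t$-interval on which both $K_0(2z\sinh t)$ and $\cosh(2\nu t)$ have clean lower bounds, splitting into two cases. For $z\in[1/2,1]$ the interval $t\in[1/2,1]$ works: $2z\sinh t$ lies in a bounded positive range, so $K_0(2z\sinh t)$ is bounded below by a positive constant, and $\cosh(2\nu t)\ge\cosh\nu\ge e^\nu/2$; this yields $M_\nu^2(z)\gtrsim e^\nu\gtrsim z^{-1}e^\nu$ since $z^{-1}\le 2$. For $z\in(0,1/2]$ I would instead use $t\in[t_0,t_0+1]$ with $t_0:=\sinh^{-1}(1/(2z))$, so that $2z\sinh t$ lies in a bounded interval inside $[1,\infty)$ (hence $K_0$ is again bounded below), while $\cosh(2\nu t)\ge\tfrac12 e^{2\nu t_0}\ge\tfrac12 z^{-2\nu}$ using $e^{t_0}\ge 1/z$. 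The bound $z^{-2\nu}\ge z^{-1}e^\nu$ then reduces to $(2\nu-1)\log(1/z)\ge\nu$, which follows from $\log 2\ge 2/3\ge\nu/(2\nu-1)$ for $\nu\ge 2$, $z\le 1/2$.

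For part (1), differentiating under the integral sign in \eqref{eq:nicholson} gives
\[\partial_\nu^k M_\nu^2(z)=\frac{8\cdot 2^k}{\pi^2}\int_0^\infty K_0(2z\sinh t)\,t^k\,g_k(2\nu t)\,dt,\]
where $g_k\in\{\cosh,\sinh\}$, so $|g_k|\le\cosh$ and the problem reduces to bounding the $t^k$-weighted Nicholson integral by $C_k(\nu+|\log z|)^k M_\nu^2(z)$. I would split at $T:=2(\nu+|\log z|)+2$; on $[0,T]$ the trivial bound $t^k\le T^k$ yields a contribution dominated by $T^k\cdot\tfrac{\pi^2}{8}M_\nu^2(z)$, which is of the desired form.

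The tail $\int_T^\infty$ is the only place where any real work is needed. For $t\ge T$ we have $2z\sinh t\ge ze^t/2\ge e^{2\nu+2}/(2z)\ge 1$, so the estimate $K_0(x)\le Ce^{-x}$ on $[1,\infty)$ bounds the integrand by $Ct^k e^{2\nu t-ze^t/2}$. Past $T$ the exponent is dominated by $-ze^t/4$, making the tail super-exponentially small in $\nu+|\log z|$; by part (2), $M_\nu^2(z)\ge cz^{-1}e^\nu$, so the tail is absorbed into $C_k(\nu+|\log z|)^k M_\nu^2(z)$ with enormous room. The conceptual point is that the Nicholson integrand concentrates at $t\sim\log(\nu/z)\lesssim\nu+|\log z|$, so its $k$-th moment inflates the integral by only $(\nu+|\log z|)^k$; the split at $T$ is merely the bookkeeping that makes this precise.
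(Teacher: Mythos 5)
Both parts of your proposal are correct, and you take a genuinely different route from the paper in each part. For part (2), the paper replaces $\cosh(2\nu t)$ by $e^{2\nu t}$, substitutes $w=ze^t$, and tries to bound the resulting $K_0$-moment from below; as written its final display reduces the lower bound to $e^{-\nu}\int_1^\infty K_0(w)w^3\,dw$, which the text calls ``certainly greater than some positive constant'' but which actually tends to $0$ as $\nu\to\infty$ (the intended argument must retain the super-exponential growth in $\nu$ of $\int_1^\infty K_0(w)w^{2\nu-1}\,dw$). Your argument sidesteps this entirely by localizing the Nicholson integral to a unit-length $t$-window on which both $K_0(2z\sinh t)$ and $\cosh(2\nu t)$ have transparent uniform lower bounds; both your cases ($z\ge 1/2$ with the window $[1/2,1]$, and $z\le 1/2$ with $[t_0,t_0+1]$, $t_0=\sinh^{-1}(1/(2z))$) check out, including the arithmetic $(2\nu-1)\log 2\ge\nu$ for $\nu\ge 2$. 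For part (1), the paper also differentiates under the integral sign and uses $\sinh\le\cosh$, but then (via $e^{-x}\cosh x\in[1/2,1]$, the substitution $w=ze^t$, and $(a+b)^k\le 2^k(a^k+b^k)$) it converts the inequality into the estimate $\Gamma^{(k)}(2\nu-3/2)/\Gamma(2\nu-3/2)\le C_k(2\nu-3/2)^k$, which it proves by induction using Fa\`a di Bruno and a polygamma bound. Your split of the $t$-integral at $T\sim 2(\nu+|\log z|)$ is more elementary and avoids the gamma-function machinery, and it exposes the heuristic that the Nicholson integrand concentrates at $t\sim\log(\nu/z)$, so the $t^k$-weight only inflates the integral by $T^k$. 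The one step you should write out in full is the tail: verify explicitly, say by the substitution $u=ze^t/4$, that $\int_T^\infty t^k e^{-ze^t/4}\,dt$ is bounded by a constant depending only on $k$ for all $\nu\ge 2$, $z\le 1$ (indeed it decays doubly exponentially in $\nu+|\log z|$, since $ze^T=e^{2\nu+|\log z|+2}$), so it is dwarfed by the lower bound $cz^{-1}e^\nu\le M_\nu^2(z)$ from part (2).
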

We defer the proof of these estimates for the moment and use them to prove Theorem \ref{thm:smallalpha}. It suffices to show that the expression \eqref{eq:alphasmall} and all of its derivatives in $\nu$ decay to infinite order as $\nu\to\infty$. Differentiating inside the integral sign and using Fa\`a di Bruno's formula \cite{fdb},
\[\frac{\partial^k}{\partial\nu^k}\int_0^1\frac{1}{uM_{\nu}(u)^2}\, du =\int_0^1\frac 1u( \sum_{\ell=1}^k \frac{(-1)^\ell \ell!}{(M_{\nu}(u)^2)^{(\ell+1)}}B_{k,\ell}(\partial_{\nu}M_{\nu}^2(u),\ldots,\partial_{\nu}^{k-\ell+1}M_{\nu}^2(u))\, du.\]
Here $B_{k,\ell}$ is a finite sum of monomials in its arguments, each of which has $\ell$ terms and involves a total of $k$ $\nu$-derivatives of $M_{\nu}^2(u)$. By Lemma \ref{lem:expdecayest}, part (1), we have the bound
\begin{multline}
|\frac{\partial^k}{\partial\nu^k}\int_0^1\frac{1}{uM_{\nu}(u)^2}\, du|\leq\int_0^1\frac 1u(\sum_{\ell=1}^k\frac{C}{(M_{\nu}(u)^2)^{(\ell+1)}})(\nu+|\log u|)^k(M_{\nu}^2(u))^{\ell})\, du\\
=\int_0^1\frac {Ck(\nu+|\log u|)^k}{uM_{\nu}^2(u)}\, du.
\end{multline}
By Lemma \ref{lem:expdecayest}, part (2), we obtain
\[|\frac{\partial^k}{\partial\nu^k}\int_0^1\frac{1}{uM_{\nu}(u)^2}\, du|\leq C\int_0^1e^{-\nu}(\nu+|\log u|)^k\, du\leq C2^k\int_0^1e^{-\nu}(\nu^k+|\log u|^k)\, du, \]
which is immediately seen to decay to infinite order in $\nu$. This completes the proof of Theorem \ref{thm:smallalpha}. 

\begin{proof}[Proof of Lemma \ref{lem:expdecayest}]
Let us begin with (1). We use Nicholson's formula, differentiate under the integral sign, and use $\sinh(2\nu t)\leq\cosh (2\nu t)$ to obtain
\[\frac{\frac{\partial^k}{\partial\nu^k}(M_{\nu}^2(z))}{M_{\nu}^2(z)}\leq 2^k\frac{\int_0^{\infty}K_0(2z\sinh t)t^k\cosh(2\nu t)\, dt}{\int_0^{\infty}K_0(2z\sinh t)\cosh(2\nu t)\, dt}.\]
Using the fact that $e^{-x}\cosh(x)\in [1/2,1]$ for $x\geq 0$,
\[\frac{\frac{\partial^k}{\partial\nu^k}(M_{\nu}^2(z))}{M_{\nu}^2(z)}\leq 4\cdot 2^k\frac{\int_0^{\infty}K_0(2z\sinh t)t^ke^{2\nu t}\, dt}{\int_0^{\infty}K_0(2z\sinh t)e^{2\nu t}\, dt}.\]
Make the change of variables $w=ze^t$ in both numerator and denominator, then canceling a $z^{2\nu}$, gives
\[\frac{\frac{\partial^k}{\partial\nu^k}(M_{\nu}^2(z))}{M_{\nu}^2(z)}\leq 4\cdot 2^k\frac{\int_z^{\infty}K_0(w+\frac{z^2}{w})(\log w+|\log z|)^k w^{2\nu-1}\, dw}{\int_z^{\infty}K_0(w+\frac{z^2}{w})w^{2\nu-1}\, dw}.\]
Using the estimate $(a+b)^k\leq 2^k(a^k+b^k)$ and the fact that $0\leq z\leq 1$, we have
\[\frac{\frac{\partial^k}{\partial\nu^k}(M_{\nu}^2(z))}{M_{\nu}^2(z)}\leq 2^{2k+2}|\log z|^{k} + 2^{2k+2}\frac{\int_0^{\infty}K_0(w+\frac{z^2}{w})(\log w)^k w^{2\nu-1}\, dw}{\int_1^{\infty}K_0(w+\frac{z^2}{w})w^{2\nu-1}\, dw}.\]
By the fact that $K_0$ is decreasing and $z\leq 1$,
\[\frac{\frac{\partial^k}{\partial\nu^k}(M_{\nu}^2(z))}{M_{\nu}^2(z)}\leq 2^{2k+2}|\log z|^{k} + 2^{2k+2}\frac{\int_0^{\infty}K_0(w)(\log w)^k w^{2\nu-1}\, dw}{\int_1^{\infty}K_0(w+\frac{1}{w})w^{2\nu-1}\, dw}.\]
It is a fact that
\[K_0(x)\leq\frac{1}{\sqrt x}e^{-x}\sqrt{\pi/2};\quad K_0(x)\geq \frac 12\frac{1}{\sqrt x}e^{-x}\sqrt{\pi/2}\textrm{ for }x\in [1,\infty).\]
Using this fact,
\[\frac{\frac{\partial^k}{\partial\nu^k}(M_{\nu}^2(z))}{M_{\nu}^2(z)}\leq 2^{2k+2}|\log z|^{k} + 2^{2k+3}\frac{\int_0^{\infty}e^{-w}(\log w)^k w^{2\nu-3/2}\, dw}{\int_1^{\infty}e^{-w}w^{2\nu-3/2}(e^{-1/w}\frac{1}{\sqrt{1+w^{-2}}})\, dw}.\]
Estimating the denominator from below when $w\geq 1$ leads to
\[\frac{\frac{\partial^k}{\partial\nu^k}(M_{\nu}^2(z))}{M_{\nu}^2(z)}\leq 2^{2k+2}|\log z|^{k} + 2^{2k+7/2}e\frac{\int_0^{\infty}e^{-w}(\log w)^k w^{2\nu-3/2}\, dw}{\int_1^{\infty}e^{-w}w^{2\nu-3/2}\, dw}.\]
Yet when $\nu\geq 2$ we have
\[\int_0^1e^{-w}w^{2\nu-3/2}\, dw\leq \int_1^{2}e^{-w}w^{2\nu-3/2}\, dw\leq \int_1^{\infty}e^{-w}w^{2\nu-3/2}\, dw\]
and so we may change the lower limit in the denominator to 0 at the cost of a factor of 2:
\[\frac{\frac{\partial^k}{\partial\nu^k}(M_{\nu}^2(z))}{M_{\nu}^2(z)}\leq 2^{2k+2}|\log z|^{k} + 2^{2k+9/2}e\frac{\int_0^{\infty}e^{-w}(\log w)^k w^{2\nu-3/2}\, dw}{\int_0^{\infty}e^{-w}w^{2\nu-3/2}\, dw}.\]
This may all be written in terms of derivatives of the gamma function:
\[\frac{\frac{\partial^k}{\partial\nu^k}(M_{\nu}^2(z))}{M_{\nu}^2(z)}\leq 2^{2k+2}|\log z|^{k} + 2^{2k+9/2}e\frac{\Gamma^{(k)}(2\nu-3/2)}{\Gamma(2\nu-3/2)}.\]
We see that part (1) of Lemma \ref{lem:expdecayest} follows immediately from the claim that for $x\geq 2$,
\begin{equation}\label{eq:gammaclaim}\frac{\Gamma^{(k)}(x)}{\Gamma(x)}\leq C_kx^k.\end{equation} 
We prove \eqref{eq:gammaclaim} by induction. It is true for $k=1$ with $C_1=1$, as the convexity of $\Gamma$ and the recursion relation imply that
\[\Gamma'(x)\leq\Gamma(x+1)-\Gamma(x)=(x-1)\Gamma(x)\leq x\Gamma(x).\]
For the inductive step, assume it is true for $i\in\{1,\ldots,k\}$. Then use Fa\`a di Bruno's formula, separating out the $\ell=1$ term and rearranging, to write
\[\frac{\Gamma^{(k+1)}(x)}{\Gamma(x)}=(\log\Gamma(x))^{(k+1)} - \sum_{\ell=2}^{k+1}C_{\ell}\frac{1}{(\Gamma(x))^{\ell}}B_{k+1,\ell}(\Gamma'(x),\ldots,\Gamma^{(k+1)-\ell+1}(x)).\]
The first term on the right is the polygamma function, bounded in absolute value for $x>2$ by $C\log x$ when $k=1$ and just by $C$ when $k>1$, so in all cases bounded in absolute value by $Cx^{k+1}$. On the other hand, each term in $B_{k+1,\ell}$ has $\ell$ factors, each $\Gamma^{(i)}(x)$ for some $i\in[1,k]$, with $k+1$ total derivatives of $\Gamma$. By the inductive hypothesis,
\[|B_{k+1,\ell}(\Gamma'(x),\ldots,\Gamma^{(k+1)-\ell+1}(x))|\leq Cx^{k+1}\Gamma(x)^{\ell}.\]
This proves the inductive step and hence \eqref{eq:gammaclaim}, and with it part (1) of Lemma \ref{lem:expdecayest}.

For part (2) of the Lemma, we need to find a positive lower bound, when $\nu\geq 2$ and $z\leq 1$, for
\begin{equation}\label{eq:needlower}
\int_0^{\infty}zK_0(2z\sinh t)e^{-\nu}\cosh(2\nu t)\, dt.
\end{equation}
This is straightforward. By monotonicity, \eqref{eq:needlower} is greater than or equal to
\[\int_0^{\infty}zK_0(ze^t)e^{-\nu}e^{2\nu t}\, dt.\]
Using the same substitution $w=ze^t$, and the fact that $2\nu-1\geq 3$, shows that \eqref{eq:needlower} is bounded below by
\[\int_z^{\infty}K_0(w)e^{-\nu}(w/z)^{2\nu-1}\, dw\geq e^{-\nu}\int_1^{\infty}K_0(w)w^{3}\, dw. \]
This is certainly greater than some positive constant, completing the proof of part (2) and thus of Lemma \ref{lem:expdecayest}.\end{proof}

Now we analyze the large-$u$ portion of the integral \eqref{eq:phaseformula}, namely:
\begin{equation}\label{eq:alphamain}
\int_1^z\frac{1}{uM_{\nu}^2(u)}\, du.
\end{equation}
This must be done via geometric microlocal analysis rather than explicit estimates. We know that the integrand is polyhomogeneous conormal on $U$ with the variables $(u,\nu)$. Let $\tilde u=u^{-1/3}$; the integral is then
\begin{equation}\label{eq:alphamainrewrite}
\int_{\zeta}^{1}\frac{1}{\tilde uM_{\nu}^2(\tilde u^{-3})}\, d\tilde u.
\end{equation}
To analyze the polyhomogeneity of the integral we perform a construction similar to the construction of the ``triple scattering space'' in \cite{mel94}. We construct a three-dimensional manifold with corners, $\hat U$, as follows.
\begin{enumerate}
\item Begin with $[0,1)_{\zeta}\times [0,1)_{\mu}\times [0,1)_{\tilde u}$.
\item Blow up first the triple intersection $\{\zeta=\mu=\tilde u=0\}$ and then the double intersections $\{\zeta=\mu=0\}$, $\{\zeta=\tilde u=0\}$, and $\{\tilde u=\mu=0\}$. Label the boundary faces of the resulting space $\mathcal F_{ijk}$, $i,j,k\in\{0,1\}$, according to which variables vanish there; for example, the front face of the blow-up of the triple intersection is $\mathcal F_{111}$ and the front face of the blow-up of $\{\zeta=\mu=0\}$ is $\mathcal F_{110}$. Also label the lifts of the diagonals: $\{\zeta=\mu\}$ is $D_{110}$, et cetera. Call this space $\hat U_1$.
\item Blow up $D_{111}\cap\mathcal F_{111}$, quadratically with respect to $D_{111}$.
\item Blow up $D_{110}\cap\mathcal F_{111}$, $D_{101}\cap\mathcal F_{111}$, and $D_{011}\cap\mathcal F_{111}$, all quadratically with respect to $D_{ijk}$.
\item Blow up $D_{110}\cap\mathcal F_{110}$, $D_{101}\cap\mathcal F_{101}$, and $D_{011}\cap\mathcal F_{011}$, all quadratically with respect to $D_{ijk}$. This gives $\hat U$.
\end{enumerate}
We also let $\hat U^{\flat}$ be the subset of $\hat U$ with $\tilde u\geq\zeta$. It is itself a manifold with corners, as the lift of $D_{101}$ is a p-submanifold of $\hat U$. See Figure \ref{fig:hatu}.

\begin{figure}
\centering
\begin{tikzpicture}
\draw[black,thick](-0.75,1.25)--(-0.75,3);
\draw[black,thick](0.75,1.25)--(0.75,3);
\draw[thick](-1.458,0.0245)--(-2.973,-0.85);
\draw[thick](-0.708,-1.275)--(-2.223,-2.15);
\draw[thick](1.458,0.0245)--(2.973,-0.85);
\draw[thick](0.708,-1.275)--(2.223,-2.15);


\draw[thick](0.6,0) arc (0:10:0.6);
\draw[thick](0.6,0) arc (0:-10:0.6);
\draw[thick](-0.6,0) arc (180:190:0.6);
\draw[thick](-0.6,0) arc (180:170:0.6);
\draw[thick](0.3,0.52) arc (60:50:0.6);
\draw[thick](0.3,0.52) arc (60:70:0.6);
\draw[thick](-0.3,0.52) arc (120:130:0.6);
\draw[thick](-0.3,0.52) arc (120:110:0.6);
\draw[thick](0.3,-0.52) arc (-60:-50:0.6);
\draw[thick](0.3,-0.52) arc (-60:-70:0.6);
\draw[thick](-0.3,-0.52) arc (-120:-130:0.6);
\draw[thick](-0.3,-0.52) arc (-120:-110:0.6);

\draw[thick](0.205,0.565)--(0.205,1.065);
\draw[thick](-0.205,0.565)--(-0.205,1.065);
\draw[thick](-0.205,0.565) .. controls (-0.1,0.45) and (0.1,0.45) .. (0.205,0.565);
\draw[thick](0.205,-0.565)--(0.205,-1.365);
\draw[thick](-0.205,-0.565)--(-0.205,-1.365);
\draw[thick](-0.205,-0.565) .. controls (-0.1,-0.45) and (0.1,-0.45) .. (0.205,-0.565);

\draw[thick](0.385,0.46)--(1.078,0.86);
\draw[thick](0.59,0.105)--(1.283,0.505);
\draw[thick](0.385,0.46) .. controls (0.365,0.36) and (0.49,0.155) .. (0.59,0.105);
\draw[thick](-0.385,0.46)--(-1.078,0.86);
\draw[thick](-0.59,0.105)--(-1.283,0.505);
\draw[thick](-0.385,0.46) .. controls (-0.365,0.36) and (-0.49,0.155) .. (-0.59,0.105);

\draw[thick](0.385,-0.46)--(0.818,-0.71);
\draw[thick](0.59,-0.105)--(1.023,-0.355);
\draw[thick](0.385,-0.46) .. controls (0.365,-0.36) and (0.49,-0.155) .. (0.59,-0.105);
\draw[thick](-0.385,-0.46)--(-0.818,-0.71);
\draw[thick](-0.59,-0.105)--(-1.023,-0.355);
\draw[thick](-0.385,-0.46) .. controls (-0.365,-0.36) and (-0.49,-0.155) .. (-0.59,-0.105);

\draw[thick] (0.75,1.25) .. controls (0.83,1.23) and (1.078,0.93) .. (1.078,0.86);
\draw[thick] (1.078,0.86) .. controls (1.358,0.92) and (1.443,0.655) .. (1.283, 0.505);
\draw[thick] (1.283,0.505) .. controls (1.333,0.455) and (1.458,0.1) .. (1.458,0.0245);
\draw[thick] (-0.75,1.25) .. controls (-0.83,1.23) and (-1.078,0.93) .. (-1.078,0.86);
\draw[thick] (-1.078,0.86) .. controls (-1.358,0.92) and (-1.443,0.655) .. (-1.283, 0.505);
\draw[thick] (-1.283,0.505) .. controls (-1.333,0.455) and (-1.458,0.1) .. (-1.458,0.0245);
\draw[thick] (-0.708,-1.275) .. controls (-0.608,-1.33) and (-0.305,-1.385) .. (-0.205,-1.365);
\draw[thick] (0.708,-1.275) .. controls (0.608,-1.33) and (0.305,-1.385) .. (0.205,-1.365);
\draw[thick] (0.205,-1.365) .. controls (0.1,-1.6) and (-0.1,-1.6) .. (-0.205,-1.365);

\draw[thick] (0.1,1.15)--(0.1,2.9);
\draw[thick] (-0.1,1.15)--(-0.1,2.9);
\draw[thick] (0.205,1.065) .. controls (0.17,1.125) .. (0.1,1.15);
\draw[thick] (-0.205,1.065) .. controls (-0.17,1.125) .. (-0.1,1.15);
\draw[thick] (0.205,1.065) .. controls (0.405,1.05) and (0.6,1.1) .. (0.75,1.25);
\draw[thick] (-0.205,1.065) .. controls (-0.405,1.05) and (-0.6,1.1) .. (-0.75,1.25);
\draw[thick] (0.1,1.15) .. controls (0.08,1.05) and (-0.08,1.05) .. (-0.1,1.15);

\draw[thick] (-1.046,-0.488)--(-2.561,-1.363);
\draw[thick] (-0.946,-0.662)--(-2.461,-1.537);
\draw[thick] (-1.025,-0.355) .. controls (-1.059,-0.415) .. (-1.046,-0.488);
\draw[thick] (-0.82,-0.71) .. controls (-0.889,-0.71) .. (-0.946,-0.662);
\draw[thick] (-1.025,-0.355) .. controls (-1.112,-0.174) and (-1.253,-0.03) .. (-1.458,0.0245);
\draw[thick] (-0.82,-0.71) .. controls (-0.707,-0.876) and (-0.653,-1.07) .. (-0.708,-1.275);
\draw[thick] (-1.046,-0.488) .. controls (-0.949,-0.456) and (-0.869,-0.594) .. (-0.946,-0.662);

\draw[thick] (1.046,-0.488)--(2.561,-1.363);
\draw[thick] (0.946,-0.662)--(2.461,-1.537);
\draw[thick] (1.025,-0.355) .. controls (1.059,-0.415) .. (1.046,-0.488);
\draw[thick] (0.82,-0.71) .. controls (0.889,-0.71) .. (0.946,-0.662);
\draw[thick] (1.025,-0.355) .. controls (1.112,-0.174) and (1.253,-0.03) .. (1.458,0.0245);
\draw[thick] (0.82,-0.71) .. controls (0.707,-0.876) and (0.653,-1.07) .. (0.708,-1.275);
\draw[thick] (1.046,-0.488) .. controls (0.949,-0.456) and (0.869,-0.594) .. (0.946,-0.662);

\draw[thick,dotted] (-0.476,0.275)--(-2.858,1.65);
\draw[thick,dotted] (0.476,-0.275)--(2.858,-1.65);
\draw[thick,dotted] (-0.476,0.275) .. controls (-0.2,0.3) and (0.3,0.175) .. (0.476,-0.275);

\draw[thick,<-] (1,3)--(1,2.5); 
\draw[thick,<-] (-1.75,-2.15)--(-1.32,-1.9);
\draw[thick,<-] (1.75,-2.15)--(1.32,-1.9);
\node at (1.25,2.75) {$\hat u$};
\node at (-1.1,-1.9) {$\zeta$};
\node at (1.1,-1.9) {$\mu$};

\end{tikzpicture}
\caption{The space $\hat U$. The subset $\hat U^{\flat}$ is above the dotted line.}
\label{fig:hatu}
\end{figure}
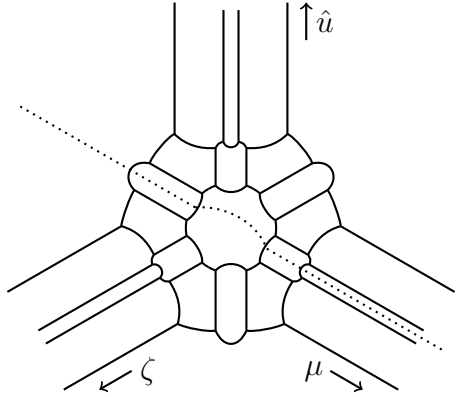

The reasoning is now straightforward. Since the map from $\hat U\to U$ given by projection off the $\zeta$-variable is a b-fibration by Proposition \ref{prop:triplescat}, the pullback theorem tells us that the function $(\tilde uM_{\nu}^2(\tilde u^{-3}))^{-1}$ is polyhomogeneous conormal as a function on $\hat U$. By restriction, it is polyhomogeneous conormal as a function on $\hat U^{\flat}$. Since the map from $\hat U^{\flat}$ to $U$ given by projection off the $\tilde u$-variable is a b-fibration by the same Proposition \ref{prop:triplescat}, the pushforward theorem tells us that the integral \eqref{eq:alphamainrewrite} is polyhomogeneous conormal on $U$, as desired. This proves Theorem \ref{thm:mainalpha}.


\appendix

\section{Background on geometric microlocal analysis}

The material in this section is originally due to Richard Melrose \cite{melrosebook}. The reference \cite{gribc} gives a very clear introduction; see also \cite{maz91}. This section is modeled closely on \cite[Section 2]{gtv}, giving a brief summary of the results we need for this particular work.

\subsection{Manifolds with corners and blow-up} A \emph{manifold with corners} is a space $X$ which is locally diffeomorphic near each point $p\in X$ to $\mathbb R_{+}^k\times\mathbb R^{N-k}$ for some $k\in\mathbb N_0$. We say that $k$ is the codimension of the point $p$. Manifolds with corners are examples of stratified spaces, stratified according to the codimension of each point. The faces of codimension 1 of $X$ are called \emph{boundary hypersurfaces}, and the faces of codimension $\geq 2$ are called \emph{corners}. Throughout we make the technical restriction that each boundary hypersurface $H\subseteq X$ is embedded, that is, that the inclusion map $\iota:H\to X$ is an embedding. This guarantees the existence, for each boundary hypersurface $H$, of a \emph{boundary defining function} $\rho_H:X\to\mathbb R^+$ for which $\rho_H=0$ on $X$ and $d\rho_H\neq 0$ on $X$. A subset $P\subseteq X$ is called a \emph{p-submanifold} of $X$ if it may be locally expressed as a coordinate submanifold of $\mathbb R_{+}^k\times\mathbb R^{N-k}$.

Any $p$-submanifold $P$ of a manifold with corners $X$ may be \emph{blown up} to produce a new manifold with corners $[X;P]$. This blow-up is an invariant way of introducing polar coordinates around $P$. Specifically, the blown-up space $[X;P]$ is constructed by taking $X\setminus P$ and gluing it to the inward pointing spherical normal bundle of $P\subseteq X$. This inward pointing spherical normal bundle is the \emph{front face} of the blow-up. Distinct points on the front face correspond to the endpoints of different rays approaching $P\subseteq X$. There is a ``blow-down map'' $\beta:[X;P]\to X$ which is the identity on $X\setminus P$ and maps every point in the front face to the corresponding point in $P$.  We borrow the terminology of \cite{gtv} verbatim to discuss lifts of $p$-submanifolds: if $Z\subseteq X$ is a p-submanifold which is not contained in $P$, then its \emph{lift} to $[X;P]$ is defined to be the closure of $\beta^{-1}(Z\setminus P)$.

A natural way to think about blown-up spaces is to use \emph{projective coordinates}. We illustrate this in the case $X=[0,1]\times[0,1]$, $P=\{(0,0)\}$. Near the intersection of the front face and the lift of $\{x=0\}$, we use the coordinates $(y,\frac xy)$, so that $y$ is a boundary defining function for the front face and $\frac xy$ is a boundary defining function for the lift of $\{x=0\}$; similarly, near the intersection of the front face and the lift of $\{y=0\}$, we use $(x,\frac yx)$. In the interior of the front face, either system may be used. See \cite[Section 2.3.4]{griscales} for a full explanation of projective coordinates for arbitrary $P$ and $X$.

We also need to consider a variant on blow-up known as \emph{quasihomogeneous blow-up}. This was originally introduced in \cite{melrosebook}; see \cite{griscales,grhu} for an introduction. In short, instead of introducing a new point for each distinct \emph{ray} approaching $P$, we introduce a new point for each distinct parabola (or cubic, or quartic, et cetera) approaching $p$. We again illustrate this procedure with $X=[0,1]\times[0,1]$ and $P=\{(0,0)\}$. We define the quasihomogeneous blow-up $[X;P]_{a,\{y=0\}}$ of order $a\in\mathbb N$ with respect to the face $\{y=0\}$ to be the space obtained by replacing the endpoint of each curve $y=cx^a$ with a separate point. Projective coordinates are explained in \cite[Section 2.3.4]{griscales}. Near the intersection of the front face and the lift of $\{x=0\}$ we use $(y^{1/a},\frac{x}{y^{1/a}})$, and near the intersection of the front face and the lift of $\{y=0\}$ we use $(x,\frac{y}{x^a})$. There is again a blow-down 
map $\beta:[X;P]_{a,\{y=0\}}\to X$.

\subsection{Polyhomogeneous conormal functions}
We first define polyhomogeneous conormal functions on $\mathbb R_+$ and then extend the definition to arbitrary manifolds with corners.
\begin{definition} A function $f(x)$ is polyhomogeneous conormal on $\mathbb R_+$, written $f\in\mathcal A_{\phg}(\mathbb R_+)$, if it has an expansion at $x=0$ of the form
\[f(x)\sim\sum_{(s,p)\in E}a_{s,p}x^s(\log x)^p,\]
where $E\subseteq\mathbb C\times\mathbb N_0$ is an \emph{index set}:
\begin{enumerate}
\item The real parts $\Re(s)$ have a finite minimum and, when arranged in increasing order, approach $\infty$;
\item At most finitely many $p\in\mathbb N_0$ have $(s,p)\in E$ for any given $s$;
\item If $(s,p)\in E$, then $(s+1,p)\in E$, and if additionally $p>0$, then $(s,p-1)\in E$\footnote{This condition requires $E$ to be a $C^{\infty}$-index set, which means that the space $\mathcal A_{\phg}$ is invariant under a smooth change of coordinates. We require this throughout.}.
\end{enumerate}
Furthermore, the asymptotic expansions must hold after $b$-derivatives are applied: for any $k\in\mathbb N_0$ and any $r\in\mathbb R$, we must have
\[|(x\partial_x)^k(f(x)- \sum_{(s,p)\in E\, |\, \Re(s)\leq r}a_{s,p}x^s(\log x)^p)|\leq C_{r,k}x^r.\]
\end{definition}
Note that the $b$-derivative property means that these expansions may be differentiated term by term with respect to $x$. This is one reason to consider polyhomogeneous conormal expansions rather than ``standard'' asymptotic expansions.

Now let $X$ be a manifold with corners. An \emph{index family} $\mathcal E$ on $X$ is an assignment of an index set $E_{\mathcal F}$ to each boundary hypersurface $\mathcal F$. Then we define the space $\mathcal A_{\phg}^{\mathcal E}(X)$ inductively on $\dim X$ as follows. Near each boundary hypersurface $\mathcal F$ of $X$ with defining function $\rho_{\mathcal F}$, we require
\[f(x)\sim\sum_{(s,p)\in E_{\mathcal F}}a_{s,p}\rho_{\mathcal F}^s(\log \rho_{\mathcal F})^p,\]
where the coefficients $a_{s,p}$ are now polyhomogeneous conormal functions on $\mathcal F$, which has dimension $\dim X - 1$. In particular, suppose that $\mathcal F_1,\ldots,\mathcal F_{\ell}$ are the boundary hypersurfaces which intersect $\mathcal F$. Then we require
\[a_{s,p}\in\mathcal A_{\phg}^{(E_{\mathcal F_1},\ldots,E_{\mathcal F_{\ell}})}(\mathcal F).\]
Moreover, we require that the expansions be uniform after the application of any number of $b$-vector fields, which are vector fields on $X$ tangent to the boundary of $X$. In particular, if $V_1,\ldots,V_k$ are $k$ such vector fields, then we require
\[|V_1\dots V_k(f(x)- \sum_{(s,p)\in E\, |\, \Re(s)\leq r}a_{s,p}\rho_{\mathcal F}^s(\log \rho_{\mathcal F})^p)|\leq Cx^r\prod_{i=1}^k\rho_{\mathcal F_i}^{-N},\]
where $C$ and $N$ are positive real constants which may depend on $r$, $V_1,\ldots,V_k$.

\subsection{b-maps, b-submersions, b-fibrations, pullback, and pushforward}
Here we discuss several categories of smooth maps between manifolds with corners under which polyhomogeneous conormal functions behave nicely. The material is again from \cite{melrosebook}. Suppose that $X$ and $Y$ are manifolds with corners.
\begin{definition} A smooth map $f:X\to Y$ is called a \emph{b-map} if for every boundary hypersurface $\mathcal G$ of $Y$, there exist non-negative integers $e_f(\mathcal F,\mathcal G)$ such that
\[f^*\rho_{\mathcal G}=a\prod_{\mathcal F\subseteq X}\rho_{\mathcal F}^{e_f(\mathcal F,\mathcal G)},\]
with $a$ a smooth function on $X$. 
\end{definition}
Any $b$-map $f:X\to Y$ induces a map $^bf_*$ called the $b$-differential between the $b$-tangent bundles of $X$ and $Y$. See \cite{melrosebook} for details. We say that
\begin{definition} A b-map $f:X\to Y$ is a \emph{b-submersion} if $^bf_*(x)$ is surjective for each $x\in X$.
\end{definition}
\begin{definition} A \textbf{b-fibration} $f:X\to Y$ is a surjective b-submersion with the property that for any boundary hypersurface $\mathcal F$ of $X$, the image of $\mathcal F$ is either all of $Y$ or a boundary hypersurface of $Y$.
\end{definition}
In particular a b-fibration may not map a boundary hypersurface of $X$ into a corner of $Y$. See \cite{melrosebook,gribc} for further details.

We can now state the pullback and pushforward theorems of Melrose \cite{melrosebook}:
\begin{theorem}\label{thm:pullback} [Pullback theorem] If $f:X\to Y$ is a b-map and $a\in\mathcal A_{\phg}(Y)$, then $f^*a\in\mathcal A_{\phg}(X)$.
\end{theorem}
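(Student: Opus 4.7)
The proof will proceed via a local coordinate calculation followed by asymptotic summation, following Melrose's original argument. Fix $p \in X$ and choose adapted coordinates $(\rho_1, \ldots, \rho_k, y)$ near $p$, where $\rho_j$ is a boundary defining function for the hypersurface $\mathcal{F}_j$ through $p$ and $y$ parametrizes the corner; similarly, near $f(p) \in Y$, choose $(\tilde\rho_1, \ldots, \tilde\rho_{k'}, \tilde y)$. The b-map hypothesis gives $f^*\tilde\rho_i = c_i \prod_j \rho_j^{e_{ij}}$ with smooth positive $c_i$ and non-negative integers $e_{ij} := e_f(\mathcal{F}_j, \mathcal{G}_i)$. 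Consequently
\[
f^*(\log \tilde\rho_i) = \log c_i + \sum_j e_{ij}\log \rho_j,
\]
which is the key identity driving the whole proof: the pullback of a logarithm is a smooth function plus an integer linear combination of logarithms of boundary defining functions on $X$. This is where the b-map hypothesis is used essentially, and it is what keeps the pullback inside the polyhomogeneous conormal class.

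The second step is to pull back a single joint monomial $\prod_i \tilde\rho_i^{s_i}(\log \tilde\rho_i)^{p_i}$ appearing in the polyhomogeneous expansion of $a$ at a corner of $Y$. Using the identity above and the multinomial theorem yields a finite sum of terms of the form
\[
b(\rho, y) \prod_j \rho_j^{\sum_i s_i e_{ij}}(\log \rho_j)^{q_j},\qquad \sum_j q_j \leq \sum_i p_i,
\]
with $b$ smooth on a neighborhood of $p$ in $X$. Each such term is manifestly polyhomogeneous conormal on $X$. The pullback index family at each $\mathcal{F}_j$ is obtained by applying the linear map $(s_1, \ldots, s_{k'}) \mapsto \sum_i s_i e_{ij}$ to the $E_{\mathcal{G}_i}$, together with permitted log powers up to $\sum_i p_i$. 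The index set conditions, namely discrete real parts tending to infinity, finite minima, and $C^{\infty}$-closure, transfer cleanly precisely because the $e_{ij}$ are non-negative integers.

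Finally, assemble the full expansion of $f^*a$ by asymptotic summation and verify the uniform b-derivative bounds. Truncate the $Y$-expansion of $a$ at any desired order, pull back termwise as above, and bound the remainder: since $e_{ij} \geq 0$, only finitely many terms of the $Y$-expansion contribute below any given order at any given $\mathcal{F}_j$, so the pulled-back remainder vanishes to an order that grows with the truncation. The uniform conormal estimates transfer via the chain rule: if $V$ is a b-vector field on $X$ then ${}^{b}f_*V$ expresses, up to smooth coefficients, as a combination of pullbacks of b-vector fields on $Y$, so $V^\ell(f^*a)$ reduces to pullbacks of iterated b-derivatives of $a$, to which the hypothesis applies directly. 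The main obstacle is the bookkeeping when several boundary hypersurfaces $\mathcal{F}_j$ of $X$ all map into the same $\mathcal{G}_i$ with different exponents $e_{ij}$, producing nontrivial joint expansions at the corners of $X$. This is handled by induction on dimension: the coefficients $a_{\vec s, \vec p}$ in the expansion of $a$ at a face $\mathcal{G}_i$ of $Y$ are themselves polyhomogeneous conormal on that face, the restriction of $f$ to the preimage of $\mathcal{G}_i$ is again a b-map onto $\mathcal{G}_i$, and the inductive hypothesis pulls those coefficients back to the appropriate polyhomogeneous class on the corresponding face of $X$.
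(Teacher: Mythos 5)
The paper does not actually prove this theorem: it is stated as a result of Melrose and cited to \cite{melrosebook}, so there is no argument in the paper to compare yours against. Taken on its own, your sketch follows the standard line of Melrose's proof and identifies the right key identity ($f^*(\log\tilde\rho_i)$ is smooth plus an integer combination of $\log\rho_j$). However, the direct remainder estimate in your third paragraph is false as stated. You claim that ``since $e_{ij}\geq 0$, only finitely many terms of the $Y$-expansion contribute below any given order at any given $\mathcal{F}_j$,'' but this breaks down precisely when some $e_{ij}=0$: then the exponent $\sum_i s_i e_{ij}$ ignores $s_{i}$ entirely, and infinitely many monomials in the joint expansion of $a$ at a corner of $Y$ contribute at the same order at $\mathcal{F}_j$. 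One cannot simply truncate the $Y$-expansion and bound the pulled-back tail. The correct resolution is the dimensional induction you gesture at in your final sentence, but you frame that as bookkeeping for the case where several $\mathcal{F}_j$ map into the same $\mathcal{G}_i$; the induction actually has to carry the entire remainder argument. Near $\mathcal{F}_j$ one should expand $a$ only at the boundary faces $\mathcal{G}_i$ with $e_{ij}>0$, with coefficients that are polyhomogeneous on the corresponding corner of $Y$, pull those coefficients back by the inductive hypothesis applied to $f$ restricted to $\mathcal{F}_j$, and let the $\mathcal{G}_i$ remainder control the $\mathcal{F}_j$ remainder. Your description of the b-derivative transfer is also slightly loose (a general b-vector field on $X$ is not $f$-related to a single b-vector field on $Y$; rather, pointwise $df(V)$ is a smooth combination of b-vector fields on $Y$), but this is a matter of phrasing rather than substance.
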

\begin{theorem}\label{thm:pushforward} [Pushforward theorem] If $f:X\to Y$ is a b-fibration, and $a\in\mathcal A_{\phg}(X)$ is a polyhomogeneous density on $X$ for which $f_*a$ is well-defined, then $f_*a\in\mathcal A_{\phg}(Y)$ is a polyhomogeneous density on $Y$.
\end{theorem}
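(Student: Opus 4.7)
The plan is to follow Melrose's strategy: localize to a model situation on $X$, carry out the model pushforward by explicit integration, and control the combinatorics of the resulting index sets on $Y$. First, near a point $p \in X$ of codimension $k$, I would choose coordinates $(x_1, \ldots, x_k, y_1, \ldots, y_{n-k})$ with the $x_i$ defining functions for the boundary hypersurfaces $\mathcal F_i$ through $p$, and dually pick coordinates $(u_1, \ldots, u_\ell, v_1, \ldots, v_{m-\ell})$ near $f(p) \in Y$. The b-map hypothesis gives $f^* u_j = a_j \prod_i x_i^{e_{ij}}$ with $a_j$ smooth and positive. The b-submersion condition makes $^b f_*$ fiberwise surjective, and combined with the b-fibration condition this forbids any column of the exponent matrix $(e_{ij})$ from having more than one nonzero entry; otherwise a single boundary hypersurface of $X$ would map into a corner of $Y$, contradicting the definition. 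Absorbing the $a_j$ into a smooth coordinate change, one arranges a normal form in which each $u_j$ is either $x_i^{e_{ij}}$ for a unique $i$ or a smooth interior coordinate, while the fiber directions split into additional $x_i$ (along which $f$ is a trivial fibration) and transverse $y$-coordinates.

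The heart of the proof is then a one-dimensional computation. A typical term of $a$ has the form $x^s(\log x)^p \phi(x, y)\, dx\, dy$ with $\phi$ smooth near $x=0$. Taylor expanding $\phi$ and integrating termwise gives
\[\int_0^\delta x^{s+j}(\log x)^p\, dx = \sum_{q=0}^{p} c_{p,q}(s+j+1)\, \delta^{s+j+1}(\log \delta)^q\]
when $s+j+1 \neq 0$, and a polynomial in $\log \delta$ of degree $p+1$ in the resonant case. Substituting $\delta = u^{1/e}$ with $e = e_f(\mathcal F, \mathcal G)$ produces a polyhomogeneous expansion at $u=0$ whose exponents lie in $\{(s+j+1)/e\}$, with log powers up to $p$ (up to $p+1$ at resonance). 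When several $\mathcal F_i$ map onto the same $\mathcal G$, one integrates in each $x_i$ in turn, leaving residual polyhomogeneity in the remaining $x_{i'}$; whenever exponents from different sources coincide after pushforward, products of logarithms appear. This is exactly captured by the \emph{extended union} of index sets, $E \overline{\cup} F := E \cup F \cup \{(s, p+q+1) : (s,p) \in E,\ (s,q) \in F\}$, and the resulting index set at $\mathcal G$ is the extended union, over all $\mathcal F$ with $e_f(\mathcal F, \mathcal G) > 0$, of the rescaled sets $\{((s+1)/e_f(\mathcal F, \mathcal G),\, p) : (s,p) \in E_{\mathcal F}\}$.

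With the model case settled, I would globalize via a partition of unity on $X$ subordinate to a finite cover by charts of the above form, using the fact that index sets transform correctly under smooth coordinate changes so that the local expansions glue intrinsically. Conormality on $Y$ follows because any b-vector field on $Y$ lifts along $f$ to a b-vector field on $X$: applying such a lift to the density under the integral sign preserves the polyhomogeneous structure, and the bounds in the definition of $\mathcal A_{\phg}(X)$ pass through integration. The well-definedness hypothesis guarantees that finite truncation remainders also pushforward to integrable functions on $Y$, yielding the required uniform error estimates. The main technical obstacle is the bookkeeping of the extended-union combinatorics when several $x_i$ map simultaneously to the same $\mathcal G$; this is handled by induction on the codimension, since one-dimensional integration in each $x_i$ interacts with the others only through the smoothly varying coefficients, reducing the general case to iterated application of the model identity above.
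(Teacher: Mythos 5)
The paper does not prove this result: it appears in Appendix~A as a cited theorem of Melrose \cite{melrosebook}, and the paper explicitly remarks that only this weak version (existence of some polyhomogeneous expansion, without the precise index-set formula) is used. So there is no in-paper proof to compare your sketch against.

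On its own terms, your sketch captures the standard outline of Melrose's argument: a local normal form for the b-fibration, a one-variable model integral, extended unions to track coincident exponents and logarithms, globalization by partition of unity, and conormality on $Y$ via lifting b-vector fields to $X$. However, the one-dimensional model is not quite right as written. You pose the model as $\int_0^\delta x^{s+j}(\log x)^p\,dx$ followed by a substitution $\delta=u^{1/e}$, but the actual pushforward is an integral over the whole fiber, whose endpoints depend on the base variable. In the basic nontrivial model $u=x_1x_2$ one integrates $\int_u^1 a(x_1,u/x_1)\,dx_1/x_1$; the analysis requires splitting this at a fixed intermediate point, Taylor-expanding the integrand near each end, and assembling the contributions from the two boundary hypersurfaces of $X$ that cover $\{u=0\}$. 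The ``$+1$'' shift from integration, the rescaling by $e_f(\mathcal F,\mathcal G)$, and the extra logs at resonances are three distinct phenomena, and your single display conflates them. There is also a small slip: with your normalization $f^*u_j=a_j\prod_i x_i^{e_{ij}}$, the b-fibration condition constrains the $i$-index (each $\mathcal F_i$ may have $e_{ij}>0$ for at most one $j$), i.e.\ the rows rather than the columns of $(e_{ij})$; several $\mathcal F_i$ mapping to the same $\mathcal G_j$ is allowed and is exactly what produces the extended union, as you yourself correctly use a few lines later.
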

These are very weak versions of the theorems -- the full versions give formulas for the index sets of the pullback and pushforward distributions in terms of the index sets of the original distributions. However we do not need these formulas for this manuscript.

The following lemma may be proved with explicit calculations in local coordinates.
\begin{lemma}\label{lem:blowdown} Suppose $P\subseteq X$ is a p-submanifold, and let $[X;P]$ be a blow-up (either regular or quasihomogeneous). Let $\beta:[X;P]\to X$ be the blow-down map. Then:
\begin{enumerate}
\item The blow-down map $\beta$ is a b-map.
\item If $P$ is given in each local model $\mathbb R_{+}^k\times\mathbb R^{N-k}$ by the vanishing of any number of the $\mathbb R_{+}$ variables, then $\beta$ is a b-submersion.
\end{enumerate}
\end{lemma}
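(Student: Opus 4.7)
The plan is to verify both assertions by direct computation in local projective coordinate charts. Both statements are local, so I would work near a point of $P \subseteq X$, modeling $X$ as $\mathbb R_{+}^k \times \mathbb R^{N-k}$ and $P$ as a coordinate submanifold. Away from the front face $\beta$ is a diffeomorphism and both conclusions are immediate, so the interesting region is a neighborhood of the front face, where the projective coordinates are explicit and $\beta$ reduces to a monomial substitution.

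For part (1), in the regular case I would distinguish one of the coordinates cut out by $P$---call it $x_1$---to serve as the boundary defining function of the front face, and write the remaining vanishing coordinates as ratios $v_i = x_i/x_1$ (or $w_j = y_j/x_1$, if $P$ contains interior directions) in the new chart. The blow-down map then takes the form $(x_1, v_i, \ldots) \mapsto (x_1, x_1 v_i, \ldots)$, so the pullback of each boundary defining function of $X$ is visibly a product of boundary defining functions of $[X;P]$ times a nonvanishing smooth function; this is the b-map condition, with integer exponents $e_\beta(\mathcal F, \mathcal G) \in \{0,1\}$. The quasihomogeneous variant is the same computation, now yielding a higher power on the ``parabolic'' face: with coordinates $(u, v) = (y^{1/a}, x/y^{1/a})$ one finds $\beta^* \rho_y = u^a$ and $\beta^* \rho_x = u v$. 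Repeating in the other projective charts (by distinguishing each vanishing coordinate in turn) covers the entire front face.

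For part (2), the additional hypothesis on $P$ eliminates the $w_j$'s from the substitution, and a direct computation in the same chart shows
\[ {}^b\beta_*(x_1 \partial_{x_1}) = x_1 \partial_{x_1} + \sum_{i \geq 2} x_i \partial_{x_i}, \qquad {}^b\beta_*(v_i \partial_{v_i}) = x_i \partial_{x_i}, \]
with the untouched b-generators $x_{i'} \partial_{x_{i'}}$ and $\partial_{y_j}$ carried to their counterparts on $X$. Subtracting the second family from the first recovers each $x_i \partial_{x_i}$ individually, so the image of ${}^b\beta_*$ spans the full b-tangent bundle of $X$, which is exactly the b-submersion property. The main thing to be careful about is checking that this linear algebra goes through in every projective chart, including at the corners of the front face where it meets the other hypersurfaces, and in the quasihomogeneous case where the formulas pick up factors of the weight $a$ but remain full rank. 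The hypothesis in (2) is sharp: if $P$ were to include an interior vanishing coordinate $y_1$, then the substitution would give $y_1 = x_1 w_1$, whence ${}^b\beta_*(\partial_{w_1}) = x_1 \partial_{y_1}$, which is \emph{not} a b-vector field on $X$, and $\partial_{y_1}$ would fail to lie in the image.
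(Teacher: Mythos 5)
Your proof is correct and follows exactly the approach the paper indicates (the paper states only that the lemma "may be proved with explicit calculations in local coordinates" without supplying the details). Your local computations in projective charts are the standard argument: the pullback of each boundary defining function of $X$ is manifestly a monomial in boundary defining functions of $[X;P]$ (with the weight $a$ appearing as the exponent on the front face in the quasihomogeneous case), and the b-differential computation correctly recovers a spanning set of $^bT X$ from the image under the hypothesis of part (2), with the subtraction $x_1\partial_{x_1} = \bigl(x_1\partial_{x_1} + \sum_i x_i\partial_{x_i}\bigr) - \sum_i x_i\partial_{x_i}$ closing the argument. One small notational imprecision: the sum in $^b\beta_*(x_1\partial_{x_1})$ should run only over the indices $i$ of the $\mathbb R_+$ coordinates cut out by $P$ (i.e.\ those for which $v_i$ is introduced), not over all $i\geq 2$; but this does not affect the conclusion. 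Your remark on the sharpness of the hypothesis in (2) is also apt.
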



\section{New results on polyhomogeneous conormal functions}

Let $X$ be a mwc with bhses $\mathcal F_1,\ldots,\mathcal F_n$ and let $\mathcal E=(E_{\mathcal F_1},\ldots,E_{\mathcal F_n})$ be an index family. Assume for simplicity that all exponents $(s,p)\in E_{\mathcal F_j}$ for each $j$ have $s$ real. For any index set $E$, let $\inf E$ be the element $(s,p)$ of $\mathbb R\times\mathbb N_0$ for which $s\leq s'$ for all $s'\in E$ and $p\geq p'$ for all $(s,p')\in E$. As shorthand, let
\[x^{\inf E}:= x^{(s,p)}:=x^s(\log x)^p.\]
We say that $\inf E\geq 0$ if either $s>0$ or $s=0$ and $p=0$, and we say that $\inf\mathcal E\geq 0$ if $\inf E\geq 0$ for each element $E$ of $\mathcal E$. A polyhomogeneous function $u$ which is continuous, or equivalently bounded, has a minimal index family $\mathcal E$ such that $\inf\mathcal E\geq 0$, and vice versa.

Our first result is a general result concerning the composition of smooth functions with polyhomogeneous functions.

\begin{theorem}\label{thm:comp} Suppose that $g\in\mathcal A_{phg}^{\mathcal E}(X)$, with $\inf\mathcal E\geq 0$. Suppose that $f:\mathbb R\to\mathbb R$ is smooth on an open set compactly containing the image of $g$. Then the composition $f\circ g\in\mathcal A_{phg}^*(X)$.
\end{theorem}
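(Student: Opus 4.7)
My plan is to prove this by induction on the depth of $X$ (the maximum codimension of a corner), using Taylor's theorem with remainder applied to $f$. Polyhomogeneity is local, so I would work in a chart $(x_1,\ldots,x_k,y_1,\ldots,y_{n-k}) \in [0,\infty)^k \times \mathbb{R}^{n-k}$ near a codimension-$k$ corner. The hypothesis $\inf \mathcal{E} \geq 0$ ensures $g$ is continuous, and combined with the compactness assumption gives a fixed open set $V \subset \mathbb{R}$ containing the image of $g$ on which $f$ and all its derivatives are bounded.

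For the inductive step, I pick a boundary hypersurface $H = \{x_1 = 0\}$ and decompose $g = g_0 + r$, where $g_0$ is the restriction $g|_H$ extended trivially to be independent of $x_1$, and $r = g - g_0$ is polyhomogeneous with strictly positive leading order $s_1 > 0$ at $H$. The strict positivity uses $\inf E_H \geq 0$, which rules out any $\log$ terms at order zero. The inductive hypothesis applied on $H$ (of smaller depth) tells us that each $f^{(j)} \circ g_0$ is polyhomogeneous on $H$, and hence on the chart. Taylor's formula with integral remainder then yields
\[f(g) = \sum_{j=0}^{N-1} \frac{f^{(j)}(g_0)}{j!}\, r^j + R_N(g_0, r)\, r^N,\]
with $R_N$ a $C^\infty$ function on the relevant compact set. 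Each of the first $N$ terms is polyhomogeneous by the algebra property, and the remainder is $O(r^N)$, of leading order at least $N s_1$ at $H$.

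Letting $N \to \infty$ yields a formal polyhomogeneous expansion at $H$ whose truncations approximate $f(g)$ to arbitrary order. By asymptotic completeness of polyhomogeneous conormal spaces (a Borel-type summation), this formal expansion is realized by an actual polyhomogeneous function $F$ on the chart, and $f(g) - F$ vanishes to infinite order at $H$ in the conormal sense. Repeating the argument at each boundary hypersurface, with compatibility at corners automatic because all expansions come from a single joint Taylor development, completes the induction.

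The main obstacle is verifying conormal bounds on the remainder: applying any number of $b$-vector fields to $R_N(g_0, r)\, r^N$ must preserve the estimate $O(x_1^{N s_1})$ at $H$ and the analogous estimates at the other faces. Via Fa\`a di Bruno's formula, this reduces to controlling $b$-derivatives of $g_0$ and $r$, which are themselves polyhomogeneous of no lower leading order than the originals. Since $b$-derivatives preserve leading order and $R_N(g_0, r)$ remains bounded (as a smooth function of jointly bounded arguments), $r^N$ retains order $N s_1$ after any number of $b$-derivatives. This bookkeeping is the technically involved part of the argument.
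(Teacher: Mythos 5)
Your proposal takes essentially the same approach as the paper: Taylor-expand $f$ around the restriction of $g$ to a boundary hypersurface (the paper's ``$g(0)$'', your ``$g_0$''), control the remainder by the positive leading order of $g-g_0$, establish conormal error bounds via Fa\`a di Bruno's formula, and conclude by induction. The only differences are cosmetic: the paper inducts on $\dim X$ rather than on depth, and it verifies the inductive definition of polyhomogeneity directly rather than invoking a Borel-summation step.
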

\begin{proof} The proof proceeds via induction on $\dim X$. So first we assume that $\dim X=1$ and let $x$ be a boundary defining function for one of its boundary hypersurfaces. Let $N\in\mathbb N$ be arbitrary. Since $f$ is smooth on an open set compactly containing the image of $g$, $f^{(N+1)}$ is bounded on the image of $g$; let $M_{N+1}$ be a global bound for $|f^{(N+1)}|$ on that image. By Taylor's theorem applied to $f(x)$ at $a=g(0)$,
\begin{equation}\label{eq:polycomp}
(f\circ g)(x) = f(g(0))+f'(g(0))(g(x)-g(0))+\dots+\frac{f^{(N)}(g(0))}{N!}(g(x)-g(0))^N + R_{N}(g(x)),
\end{equation}
where
\[|R_N(g(x))|\leq\frac{M_{N+1}}{(N+1)!}(g(x)-g(0))^{N+1}.\]
Since $g(x)$ is polyhomogeneous and continuous down to $x=0$, there exists an $\epsilon>0$ for which $|g(x)-g(0)|=O(x^{\epsilon})$. So each term $(g(x)-g(0))^k$ is $O(x^{k\epsilon})$ and the remainder $|R_N(g(x))|$ is bounded by $Cx^{\epsilon(N+1)}$. On the other hand, $g(x)-g(0)$ is of course polyhomogeneous as well, and since polyhomogeneous functions form an algebra, so is $(g(x)-g(0))^k$ for any $k$. All this means that \eqref{eq:polycomp}, as $N$ goes to infinity, gives a complete asymptotic expansion for $f\circ g$ at $x=0$. To find the terms of order at most $s$ in this expansion one needs to take approximately $s/\epsilon$ terms.

We must still show that this expansion is a polyhomogeneous \emph{conormal} expansion, in that it still holds when derivatives of the form $(x\partial_x)^m$ are applied. All terms in \eqref{eq:polycomp} are polyhomogeneous conormal except possibly for the remainder $R_N(g(x))$, and so it suffices to show that for each $m$ and each $N$, we still have, for some constant $C$ possibly depending on $m$ and $N$, that
\[|(x\partial_x)^mR_N(g(x))|\leq Cx^{\epsilon(N+1)}.\]
By writing out $(x\partial_x)^m$ we see that it is equivalent to show that
\[|x^m(\partial_x)^mR_N(g(x))|\leq Cx^{\epsilon(N+1)}.\]

We now prove this latter estimate. Using Fa\'a di Bruno's formula \cite{fdb}, we have
\[x^m(\partial_x)^mR_N(g(x))=\sum_{k=1}^mR_N^{(k)}(g(x))x^mB_{m,k}(g'(x),g''(x),\ldots,g^{m-k+1}(x)),\]
where $B_{m,k}$ are exponential Bell polynomials. We know that $R_N(x)$ is a smooth function of $x$ around $a=g(0)$ and that it is order $O((x-a)^{N+1})$. So its $k$th derivative $R_N^{(k)}(x)$ is also smooth and is of order $O((x-a)^{N+1-k})$. Thus
\[|R_N^{(k)}(g(x))|\leq Cx^{\epsilon(N+1-k)}.\]
As for the exponential Bell polynomial $B_{m,k}$, it is a finite sum of monomials $x^{\alpha}$, where $\alpha$ is a multiindex with order $k$ and with
\[\sum_{i=1}^{m-k+1}i\alpha_i=m.\]
So $B_{m,k}(g'(x),g''(x),\ldots,g^{m-k+1}(x))$ will be a finite sum of monomials in $g'(x),\ldots,g^{m-k+1}(x)$. Each such monomial has $k$ factors and involves a total of $m$ derivatives of $g$. Thus, distributing the $x^m$ so one factor of $x$ goes in front of each derivative of $g$, $x^mB_{m,k}(g'(x),g''(x),\ldots,g^{m-k+1}(x))$ is a finite sum of monomials in
\[x(\partial_x)g(x),\ldots,x^{m-k+1}(\partial_x)^{m-k+1}g(x),\]
with a total of $k$ terms having a total of $m$ factors of $\partial_x$. However, since $g(x)$ is polyhomogeneous conormal at $x=0$, we know that for each $j>0$, $x^j(\partial_x)^jg(x)$ is $O(x^{\epsilon})$. Since there are $k$ factors, each term in $x^mB_{m,k}(g'(x),g''(x),\ldots,g^{m-k+1}(x))$ is $O(x^{\epsilon k})$ and thus
\[|x^mB_{m,k}(g'(x),g''(x),\ldots,g^{m-k+1}(x))|\leq Cx^{\epsilon k}.\]
Putting everything together gives the estimate we want for $x^m(\partial_x)^mR_N(g(x))$ and completes the proof in the case $\dim X=1$.

The proof in the case $\dim X>1$ follows easily by induction. Let $x$ be a boundary defining function for some boundary hypersurface $H$ in $X$. Then we may write down the same expression \eqref{eq:polycomp}, except that now the coefficients $f^{(j)}(g(0))$ are not numbers but rather functions on $H$. By the inductive hypothesis, each $f^{(j)}(g(0))$ is polyhomogeneous on $H$, with the same index set. The same argument gives the same remainder estimates. So at each boundary hypersurface of $X$, $f\circ g$ has a polyhomogeneous expansion in powers of the boundary defining function, with polyhomogeneous coefficients. This shows that $f\circ g$ is polyhomogeneous, as desired.
\end{proof}

Theorem \ref{thm:comp} may be applied to give sufficient criteria for powers of a polyhomogeneous conormal function to be polyhomogeneous conormal. It is immediately apparent, for example, that powers of polyhomogeneous conormal functions which are both bounded and bounded away from zero are polyhomogeneous conormal. This may be generalized somewhat, and the first step is the following definition:

\begin{definition} Let $X$ be a mwc and let $\mathcal E$ be an index family which is nonempty at each face. We say that an element $u\in\mathcal A^{\mathcal E}_{phg}(X)$ is \emph{proper} if there exists a function $u_0\in\mathcal A_{phg}^{(\mathcal E-\inf\mathcal E)_+}(X)$ and a constant $C>0$ for which
\[u=\pm\rho^{\inf\mathcal E}u_0,\quad \frac 1C\leq u_0\leq C,\]
where
\[\rho^{\inf\mathcal E}=\prod_{i=1}^n\rho_{\mathcal F_i}^{\inf E_{\mathcal F_i}}.\]
Here $(\mathcal E-\inf\mathcal E)_+$ is the index set obtained by taking $\mathcal E-\inf\mathcal E$ and removing any terms with $p<0$.
\end{definition}

This definition has a number of features. First is that $u$ cannot be an element of $A^{\mathcal E'}_{phg}(X)$ for any $\mathcal E'$ which does not also contain $\inf\mathcal E$. Second, $A$ must be nonzero on the interior $X^{\circ}$. Third, $A$ may not decay faster at any point of any boundary hypersurface -- or at any boundary \emph{face}, even -- than $\rho^{\inf\mathcal E}$.

\begin{example} Consider a local model for a manifold with corners given by $X=[0,1)_x\times [0,1)_y\times (-1,1)_z$.
\begin{itemize}
\item The functions $x$ and $y$ are proper elements of $A^{\mathcal E}_{phg}(X)$, with $\mathcal E$ given by $(\mathbb N,\mathbb N_0)$ and $(\mathbb N_0,\mathbb  N)$ respectively.
\item The function $xe^z$ is also proper, with $(\mathbb N,\mathbb N_0)$. But the function $xz$ is not a proper element of the same space, because $xz=x\cdot z$ and $z$ has a zero in $X$. In fact $xz$ is not a proper element of any $A^{\mathcal E}_{phg}(X)$, because leading orders of 1 and 0 are the only choices where the definition could work and nevertheless it does not.
\item The function $u=x^2y+xy^2$ is not a proper element of any $A^{\mathcal E}_{phg}(X)$. If it were, the leading order of $\mathcal E$ at $\{x=0\}$ and $\{y=0\}$ would have to be 1, and yet if we write $u=xyv$, then $v=x+y$, which is not positive on $X$.
\item Care must be taken with logarithms. The function $\log x e^z$ is proper, as it is $\log x$ times $u_0=e^z$. But the function $(\log x - 1)e^z$ is not, because if we try to write it as $\log x$ times a function, that function is $u_0=(1-(1/\log x))e^z$, which is bounded above and below but is not polyhomogeneous.
\end{itemize}
\end{example}

\begin{definition} Let $\mathcal E$ be an index family on a mwc $X$. Then $\mathcal E$ is \emph{non-logarithmic at leading order} if each of its component index sets $E$ has $\inf E=(s,0)$ for some $s\in\mathbb R$.
\end{definition}

The definitions we have given allow us to characterize pc functions which have pc powers, including inverses $(a=-1)$ and square roots $(a=1/2)$:
\begin{theorem}\label{thm:appendixproperpowers} Suppose that $\mathcal E$ is non-logarithmic at leading order and that $u\in\mathcal A^{\mathcal E}_{phg}(X)$ is proper. Let $a\in\mathbb R$ be such that $u^a$ is well-defined. Then $u^a\in\mathcal A^*_{phg}(X)$.
\end{theorem}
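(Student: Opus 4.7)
The plan is to reduce the theorem to an application of Theorem \ref{thm:comp} by using the properness of $u$ to strip off the singular leading behavior. First, I would invoke the definition of properness to write
\[ u = \pm \rho^{\inf\mathcal E} u_0, \qquad \tfrac{1}{C} \leq u_0 \leq C, \]
where $u_0 \in \mathcal A_{phg}^{(\mathcal E - \inf\mathcal E)_+}(X)$. Since $\mathcal E$ is non-logarithmic at leading order, each $\inf E_{\mathcal F_i}$ has the form $(s_i,0)$ with $s_i \in \mathbb R$, so
\[ \rho^{\inf\mathcal E} = \prod_{i=1}^n \rho_{\mathcal F_i}^{s_i}, \]
a pure monomial in the boundary defining functions with no logarithms. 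This is manifestly polyhomogeneous on $X$, and so is $(\rho^{\inf\mathcal E})^a = \prod_i \rho_{\mathcal F_i}^{as_i}$ for any real $a$.

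The factor $(\pm 1)^a$ causes no trouble: since $u_0 > 0$ on $X$, the sign of $u$ is constant across $X$, and the hypothesis that $u^a$ is well-defined takes care of extracting a consistent branch. It then remains to show that $u_0^a$ is polyhomogeneous conormal. For this I would apply Theorem \ref{thm:comp} with $g = u_0$ and $f(x) = x^a$. The index family of $u_0$ has $\inf (\mathcal E - \inf\mathcal E)_+ \geq 0$ by construction, so the hypothesis $\inf\mathcal E' \geq 0$ of Theorem \ref{thm:comp} is satisfied. Moreover, the image of $u_0$ lies in the compact set $[1/C, C]$, which is compactly contained in the open set $(1/(2C), 2C)$ on which $x \mapsto x^a$ is smooth. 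Thus $u_0^a \in \mathcal A_{phg}^*(X)$.

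Finally, multiplying the three polyhomogeneous pieces $(\pm 1)^a$, $(\rho^{\inf\mathcal E})^a$, and $u_0^a$ together yields $u^a$ as a product of polyhomogeneous conormal functions, hence polyhomogeneous conormal, since $\mathcal A_{phg}^*(X)$ is an algebra. I do not anticipate a serious obstacle here: the non-logarithmic leading order hypothesis is exactly what makes $(\rho^{\inf\mathcal E})^a$ a harmless polyhomogeneous factor (a logarithmic leading term $(\log \rho_{\mathcal F_i})^{p_i}$ raised to a non-integer power $a$ would not lie in $\mathcal A_{phg}^*$), and properness combined with continuity is precisely what places $u_0$ in the smooth locus of $f(x) = x^a$ so that Theorem \ref{thm:comp} applies without incident.
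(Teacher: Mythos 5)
Your proposal is correct and follows essentially the same route as the paper's proof: strip off the leading monomial $\rho^{\inf\mathcal E}$ (polyhomogeneous precisely because $\mathcal E$ is non-logarithmic at leading order), reduce to the case of a positive function bounded above and below, and then apply Theorem \ref{thm:comp} with $f(x)=x^a$, concluding via the algebra property. The only difference is that you spell out the intermediate verifications (the sign, the index set of $u_0$, the compact containment) in slightly more detail than the paper does.
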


\begin{proof} Since $\mathcal E$ is non-logarithmic at leading order, the function $(\rho^{\inf\mathcal E})^{a}$ is pc. Since pc functions form an algebra, it therefore suffices to assume that $u=u_0$, i.e. that $u$ itself is positive, bounded, and bounded away from zero. The proof is then an immediate consequence of Theorem \ref{thm:comp}, since $x^{a}$ is smooth on $(0,\infty)$, which compactly contains the image of $u$.
\end{proof}

Much of the analysis of Bessel functions via Nicholson's formula deals with exponentials of functions. It is useful to have criteria on a pc function $u$ which guarantee the polyhomogeneity of $e^{-u}$. The following is such a criterion.
\begin{theorem}\label{thm:exppc} Let $X$ be a mwc and let $\mathcal E$ be an index family on $X$. Suppose that $u\in\mathcal A^{\mathcal E}_{phg}(X)$. Furthermore, suppose that for any boundary hypersurface $\mathcal F\subseteq X$, one of the following two things happens:
\begin{enumerate}
\item $\mathcal F$ is a \textbf{type 1 bhs} for $u$, meaning that $\inf E_{\mathcal F}\geq 0$;
\item $\mathcal F$ is a \textbf{type 2 bhs} for $u$: there exist positive constants $c$ and $\epsilon$ for which, in a neighborhood of $\mathcal F$,
\[u\geq c\rho_{\mathcal F}^{-\epsilon}.\] 
\end{enumerate}
Then $e^{-u}\in\mathcal A^{\mathcal E'}_{phg}(X)$, where $\mathcal E'$ is a non-negative index family which is empty at each type 2 bhs.
\end{theorem}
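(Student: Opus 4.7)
The plan is to establish the polyhomogeneous expansion of $e^{-u}$ boundary hypersurface by boundary hypersurface, handling type 1 and type 2 faces separately. At a type 1 bhs $\mathcal{F}$ (or rather, in a neighborhood that avoids every type 2 bhs), the hypothesis $\inf E_{\mathcal{F}} \ge 0$ combined with conormality means that $u$ is bounded there, with image compactly contained in some bounded interval $I \subset \mathbb{R}$. Since $f(x) = e^{-x}$ is smooth on any open neighborhood of $I$, Theorem \ref{thm:comp} applies directly and gives the desired polyhomogeneity with non-negative leading order at $\mathcal{F}$. So the real work is at the type 2 faces.

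At a type 2 bhs $\mathcal{F}$, the goal is to show that $e^{-u}$ has empty index set at $\mathcal{F}$, meaning $e^{-u}$ together with all its iterated $b$-derivatives vanishes to infinite order in $\rho_{\mathcal{F}}$, uniformly up to polynomial blowup at the other faces. The basic estimate is immediate from the hypothesis: $|e^{-u}| \le e^{-c\rho_{\mathcal{F}}^{-\epsilon}} = O(\rho_{\mathcal{F}}^{N})$ for every $N$. For the conormality, I would apply Fa\`a di Bruno's formula to write, for any $b$-vector fields $V_1, \dots, V_m$,
\[
V_1 \cdots V_m\, e^{-u} = e^{-u} \cdot P_m\bigl(\{V_{i_1}\cdots V_{i_\ell} u\}\bigr),
\]
where $P_m$ is a polynomial in the various iterated $b$-derivatives of $u$. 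Because $\mathcal{A}_{\phg}$ is preserved by $b$-vector fields, each factor $V_I u$ is polyhomogeneous and therefore locally bounded by $C \prod_i \rho_{\mathcal{F}_i}^{-M}$ for some $M$ depending on the $V_I$. The exponential prefactor $e^{-c\rho_{\mathcal{F}}^{-\epsilon}}$ dominates any negative power of $\rho_{\mathcal{F}}$, so the product can be bounded by $C_r\, \rho_{\mathcal{F}}^{r} \prod_{\mathcal{F}_i \ne \mathcal{F}} \rho_{\mathcal{F}_i}^{-N}$ for any $r$ and sufficiently large $N$ — exactly the empty-index-set estimate at $\mathcal{F}$.

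For corners where several faces meet, I would combine the two arguments by induction on dimension (or equivalently on the number of faces at the corner). If type 2 faces $\mathcal{F}_{i_1}, \dots, \mathcal{F}_{i_j}$ meet at a point $p$, averaging the individual lower bounds gives $u \ge \tfrac{1}{j}\sum_\ell c_\ell \rho_{\mathcal{F}_{i_\ell}}^{-\epsilon_\ell}$, hence $e^{-u} \le \prod_\ell e^{-c'_\ell \rho_{\mathcal{F}_{i_\ell}}^{-\epsilon_\ell}}$, and the Fa\`a di Bruno argument from the previous step upgrades to give empty index sets in every type 2 direction simultaneously. For the type 1 directions at such a corner, I would use the polyhomogeneous expansion $u \sim \sum a_{s,p}(z) x_l^{s}(\log x_l)^p$ at the type 1 face $\mathcal{F}_l$ to split $u = a_{0,0}(z) + \tilde u(x_l, z)$, where $\tilde u$ has strictly positive leading order at $\mathcal{F}_l$; then $e^{-u} = e^{-a_{0,0}}\,e^{-\tilde u}$, the first factor is handled by the inductive hypothesis on the lower-dimensional face $\mathcal{F}_l$, and the second admits a convergent Taylor expansion in $x_l$ whose coefficients inherit both the polyhomogeneous structure and the rapid decay at type 2 faces. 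The main obstacle I anticipate is the bookkeeping required in this inductive step — verifying that the coefficients produced by Taylor expanding in the type 1 direction retain the uniform empty-index-set estimates in every type 2 direction under all $b$-vector fields. This amounts to re-running the Fa\`a di Bruno / uniform-decay estimate at each stage of the induction with parameters depending on the face, which is mechanical but requires careful tracking of constants.
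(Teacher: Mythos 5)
Your proposal follows essentially the same strategy as the paper: Theorem \ref{thm:comp} in the type 1 directions, Fa\`a di Bruno plus exponential domination in the type 2 directions, and an induction on the number of type 1 faces at each corner using the decomposition $e^{-u}=e^{-a_0}e^{-\tilde u}$. There is, however, one genuine gap in the corner step. You assert that $e^{-\tilde u}$ ``admits a convergent Taylor expansion in $x_l$ whose coefficients inherit both the polyhomogeneous structure and the rapid decay at type 2 faces.'' Neither claim holds as stated. The expansion is only asymptotic (it comes from Taylor's theorem with remainder, not from a convergent series), and its coefficients do not decay at the type 2 faces --- they blow up there, worse and worse as one takes more terms. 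Since $\tilde u$ may blow up like $\rho^{-M}$ at a type 2 face, the power $\tilde u^n$ blows up like $\rho^{-nM}$, so the coefficients of the formal series $\sum(-1)^n\tilde u^n/n!$ do not lie in any fixed polyhomogeneous class in the $y$-variables, and one cannot read off a polyhomogeneous expansion of $e^{-\tilde u}$ from them. (In fact $e^{-\tilde u}$ need not satisfy the conclusion of the theorem by itself at all, because $\tilde u$ can be negative near the type 2 faces, making $e^{-\tilde u}$ unbounded.)

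The fix, which is the key move in the paper's proof, is to refuse to separate the two factors. One writes
\[
e^{-u}=e^{-a_0}-e^{-a_0}\tilde u+\dots+\tfrac{(-1)^N}{N!}e^{-a_0}\tilde u^N+e^{-a_0}R_N(\tilde u),
\]
and observes that the infinite-order decay of $e^{-a_0}$ in the $y$-variables, which the inductive hypothesis supplies, absorbs the polynomial blow-up of each $\tilde u^n$; every coefficient $e^{-a_0}\tilde u^n$ then lies in the single space $\mathcal A^{\mathcal E'|_{\mathcal F}}_{\phg}(\mathcal F)$. The same absorption is what closes the Fa\`a di Bruno remainder estimates you anticipate. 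This is not merely careful bookkeeping of constants: without attaching the $e^{-a_0}$ factor to each term, the coefficients are not polyhomogeneous in the sense required, and the induction does not go through.
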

\begin{proof} We do this locally, in a neighborhood of each corner of $X$. Use coordinates
\[(x_1,\ldots,x_k,y_1,\ldots,y_{\ell},z_1,\ldots,z_m),\]
where $x_i\in [0,1)$ are boundary defining functions for the type 1 hypersurfaces, $y_i\in[0,1)$ are boundary defining functions for the type 2 hypersurfaces, and $z_i\in(-1,1)$ are coordinates in the other variables.

The proof will consist of induction on $k$. In the base case, $k=0$, and it is then straightforward from the definition of a type 2 bhs that there exist positive constants $c$ and $\epsilon$ for which
\[u\geq c(\min(y_1,\ldots,y_{\ell}))^{-\epsilon}.\]
It is immediate that then $e^{-u}$ decays faster than any polynomial in $y_1,\ldots,y_{\ell}$ at each type 2 bhs. Furthermore, derivatives of $e^{-u}$ consist of polynomials in $u$ and its derivatives, times $e^{-u}$. Since $u$ is polyhomogeneous, each polynomial in $u$ and its derivatives decays faster than \emph{some} power of $y_i$, and therefore each derivative of $e^{-u}$ decays faster than any polynomial as well. This is precisely what it means for $e^{-u}$ to be polyhomogeneous with empty index set at each type 2 bhs.

Now suppose we know the theorem for $k$ and take a function $u$ of $(x_1,\ldots,x_{k+1},y_1,\ldots,y_{\ell},z_1,\ldots,z_m)$. Consider its expansion at any type 1 bhs -- without loss of generality, consider $\{x_{k+1}=0\}$. Then $u$ has a polyhomogeneous expansion
\[u=a_0+a_1x_{k+1}^{s}(\log x_{k+1})^p+\dots,\]
where $s>0$ and each $a_i$ is a polyhomogeneous function of the variables \[(x_1,\ldots,x_k,y_1,\ldots,y_{\ell},z_1,\ldots,z_m).\] Define $g=u-a_0$. Then
$e^{-u}=e^{-a_0}e^{-g}$. It is immediate that $a_0$ is a function on $\mathcal F$ which satisfies the hypotheses of our theorem. Therefore, by the inductive hypothesis, $e^{-a_0}$ is polyhomogeneous, with infinite order decay in each $y_i$ variable. It is also continuous down to each $\{x_i=0\}$, $1\leq i\leq k$.

The analysis now proceeds as a variation on the proof of Theorem \ref{thm:comp}. For any $N$ we may write
\[e^{-g}=1-g(x)+\frac 1{2!}(g(x))^2+\dots+\frac{(-1)^N}{N!}(g(x))^N+R_N(g(x)).\]
Although it is true that $g(x)\to 0$ as $x\to 0$, and that $g(x)$ is polyhomogeneous, we do \emph{not} necessarily obtain a polyhomogeneous expansion for $e^{-g}$ by plugging in the expansions for $g(x)$ and its powers. The reason is that $g(x)$ is not necessarily bounded -- it may blow up at one or many $\{y_i=0\}$. As such the sequence of powers of $g(x)$ may get worse at such a boundary hypersurfaces as $N$ increases, which means that the coefficients of the expansion in $x$ do not live in the same fixed $\mathcal A^{\mathcal E}_{phg}(\mathcal F)$. However, when we multiply this formal expansion by $e^{-a_0}$, we get
\[e^{-u}=e^{-a_0}-e^{-a_0}g+\dots+e^{-a_0}\frac{(-1)^N}{N!}g^N + e^{-a_0}R_N(g(x)).\]
Now we can plug in the expansions for $g(x)$ and its powers, because when we multiply each of them by $e^{-a_0}$, they decay to infinite order in each $y_i$. Thus the coefficients do live in the same space, namely $\mathcal A^{\mathcal E'|_{\mathcal F}}_{phg}(\mathcal F)$. Assuming that we have the same estimates as in the proof Theorem \ref{thm:comp} on $e^{-a_0}R_N(g(x))$ and its derivatives, we see that $e^{-u}$ has a polyhomogeneous expansion at each type 1 bhs $\mathcal F$ with coefficients in  $\mathcal A^{\mathcal E'|_{\mathcal F}}_{phg}(\mathcal F)$ and therefore is itself polyhomogeneous conormal.

And the remainder estimates are indeed very similar to those in the proof of Theorem \ref{thm:comp}. Although $g(x)$ is no longer continuous, there certainly exist positive constants $C$, $\epsilon$, and $M$ such that for all $j$ between $0$ and $N+1$,
\[|(x_{k+1}\partial_{x_{k+1}})^jg(x)|\leq Cx_{k+1}^{\epsilon}(y_1y_2\ldots y_{\ell})^{-M}.\]
We then use Fa\'a di Bruno's formula precisely as in the proof of Theorem \ref{thm:comp}; the extra factors of $(y_1y_2\ldots y_{\ell})^{-M}$ are only finite in number and are absorbed into the super-polynomial decay of $e^{-a_0}$ in the $y$-variables. We end up with the estimate
\[|x_{k+1}^m\partial_{x_{k+1}}^m e^{-a_0}R_N(g(x))|\leq C(y)x_{k+1}^{\epsilon(N+1)},\]
where $C(y)$ decays to infinite order in the $y$-variables. This is precisely what is needed to complete the proof. \end{proof}

\section{Combinatorics of b-maps and b-fibrations}

Throughout this section we let $\pi$ be the projection map from $Q_0\times[0,\infty]$ to $Q_0$ given by $\pi(z,\nu,t)=(z,\nu)$.

\begin{proposition}\label{prop:bfibx4} The map $\pi$ lifts to a b-fibration $\pi_i$ from $X_i\to U_1$ for $i\in\{1,2,3,4\}$.
\end{proposition}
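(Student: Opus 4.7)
The plan is to proceed inductively through the four blow-ups, exploiting the standard principle that a b-fibration lifts through a blow-up whose center is compatible with the map. The base case is the trivial projection $\pi\colon X = [0,\infty]_t \times [0,1)_\zeta \times [0,1)_\mu \to Q_0$, which is a b-fibration since it is a product fiber bundle. The lifting principle I would invoke is the following: if $f\colon X \to Y$ is a b-fibration and $P\subseteq X$ is a p-submanifold, then $f$ induces a b-fibration either (a) $[X;P]\to [Y; f(P)]$ when $f(P)$ is a p-submanifold of $Y$ with $f^{-1}(f(P)) = P$ locally, or (b) $[X;P]\to Y$ when $f(P)$ is a full boundary face of $Y$ and $P$ is cut out locally by equations involving the fiber coordinate $t$. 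The analogous statement for quasihomogeneous blow-ups holds provided the quasihomogeneity parameters are compatible with the map, which reduces to a routine check in projective coordinates using Lemma \ref{lem:blowdown}.

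Applying this to our construction: for $X_1$, the center of$\cap$af projects precisely onto the corner $\{\mu=\zeta=0\}$ of $Q_0$ and equals its $\pi$-preimage there, so case (a) applies and $\pi_1\colon X_1 \to U_1$ is a b-fibration with $U_1 = [Q_0; \{\mu=\zeta=0\}]$ (i.e., $Q_1$). For the three remaining blow-ups creating $X_2$, $X_3$, and $X_4$, the key observation is that each center involves the fiber coordinate $t$ (or its rescalings $s, r, \sqrt s, \sqrt r$) nontrivially, and each projects onto a full boundary face of $U_1$: namely bf for steps 2 and 3, and oe, ae for step 4. Thus each is a case (b) lift and the target $U_1$ is unchanged. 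In the local projective coordinates from \eqref{eq:introcoords1}--\eqref{eq:introcoords3}, the projection continues to drop the $t$-family of coordinates, which visibly remains a b-fibration after each blow-up.

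The main technical obstacle will be verifying the b-submersion condition at the new corners introduced by the quasihomogeneous blow-up (creating bf$_0$) and the cubic blow-ups (creating of$_0$ and af$_0$). For each, I would compute the b-differential in the new projective coordinate systems and confirm surjectivity, then tabulate the image of each new boundary hypersurface to verify the ``faces to faces or full target'' condition: bf, bif, and bf$_0$ each project to the single face bf of $U_1$; of$_0$ and af$_0$ project to oe and ae respectively; of and af lift unchanged and continue to project to oe and ae; while tf and tif continue to project onto all of $U_1$. Once this tabulation is complete, Lemma \ref{lem:blowdown} combined with the explicit computations in projective coordinates gives b-submersion, completing the induction and hence the proof.
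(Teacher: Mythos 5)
Your proposal is correct and follows essentially the same route as the paper: the first blow-up gives $\pi_1\colon X_1\to U_1$ directly (the paper just observes $X_1=U_1\times[0,\infty]$), and for each subsequent blow-up one notes that the blow-down is a b-submersion by Lemma~\ref{lem:blowdown}, composes with the previous b-submersion, and checks that the new front face maps onto a boundary hypersurface of $U_1$. Two small remarks: the centers in steps 2 and 3 project onto the face \emph{be} of $U_1$, not ``bf'' (which is a face of $X_i$, not of $U_1$); and the explicit b-differential computations you anticipate at bf$_0$, of$_0$, af$_0$ are unnecessary, since Lemma~\ref{lem:blowdown}(2) already yields the b-submersion property for these corner blow-downs (regular or quasihomogeneous), so the composition argument finishes the job without further coordinate work.
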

\begin{proof} Observe that $X_1=U_1\times [0,\infty]$. Product-type projections are b-fibrations, so $\pi_1$ is a b-fibration.

For $\pi_2$, observe that the blow-down map $\beta_2$ from $X_2$ to $X_1$ is a b-submersion. Since $\pi_2=\pi_1\circ\beta_2$ and the composition of b-submersions is a b-submersion, $\pi_2$ is a b-submersion. Moreover the new front face bif has image be, so $\pi_2$ is a b-fibration.

For $\pi_3$ we use the same reasoning: the blow-down map $\beta_3$ from $X_3\to X_2$ is a b-submersion by Lemma \ref{lem:blowdown}, and the new front face bf$_0$ has image be.

And again, the same holds for $\pi_4$. The blow-down map $\beta_4$ is a b-submersion by Lemma \ref{lem:blowdown} and the two front faces of$_0$ and af$_0$ have images oe and ae respectively.
\end{proof}

\begin{proposition}\label{prop:bfibx6} The projection map $\pi$ lifts to a b-fibration $\pi_6$ from $X_6\to U$.
\end{proposition}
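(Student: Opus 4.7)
The plan is to argue in parallel with Proposition~\ref{prop:bfibx4}, extending the b-fibration $\pi_4 : X_4 \to U_1$ through the fifth and sixth blow-ups on the source side. On the target side, $U \subseteq Q$ differs from $U_1 \subseteq Q_1$ by the single quasihomogeneous blow-up of $D \cap \partial Q_1$, parabolic with respect to $\partial Q_1$. On the source side, $X_6$ differs from $X_4$ by the fifth blow-up of $D \cap $ bif (quadratic with respect to $D$, creating ff) together with the two disjoint centers of the sixth blow-up, $D \cap $ bf and $D \cap $ bf$_0$ (both quadratic with respect to $D$, creating bff and ff$_0$).

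The argument proceeds in three steps. First I would verify that $\pi$ extends to a smooth map $\pi_6 : X_6 \to U$. Under $\pi_4$, each of the three source centers projects into the single target center $D \cap \partial Q_1$. In the projective coordinates introduced in Section~3 on the source, the quadratic-with-respect-to-$D$ blow-ups produce boundary defining functions such as $\sqrt{|w|}$ together with quotients like $\mu/\sqrt{|w|}$ and $w/\mu^2$; the parabolic-with-respect-to-$\partial Q_1$ blow-up on the target produces exactly the same functions $\hat w_\pm$, $\hat \mu_\pm$, and $\tilde w$. In each coordinate patch, the lifted map is simply the identity in the $(w,\mu)$ variables together with the pre-existing projection in the $t$-type variables, hence smooth.

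Next I would verify that $\pi_6$ is a b-submersion. Each of the three source blow-up centers is cut out locally by the vanishing of $\mathbb R_+$ coordinates ($\mu$ together with $w_\pm$ or their analogues), so Lemma~\ref{lem:blowdown}(2) gives that both blow-down maps $X_6 \to X_5$ and $X_5 \to X_4$ are b-submersions. Composing these with the b-submersion $\pi_4$ yields a b-submersion $\pi_6$. Finally, to upgrade to a b-fibration one observes that the only new boundary hypersurfaces of $X_6$ relative to $X_4$ are ff, bff, and ff$_0$, each of which sits above $D \cap \partial Q_1$ and therefore maps surjectively onto the front face fe of $U$, not into a corner; all other hypersurfaces behave as they already did for $\pi_4$.

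The main obstacle is checking the compatibility between the three quadratic-with-respect-to-$D$ blow-ups on the source and the single parabolic-with-respect-to-$\partial Q_1$ blow-up on the target: although the two notions of quasihomogeneity reference different submanifolds, in local coordinates near the relevant corners they produce the same polar decomposition of the $(w,\mu)$ plane, which is why the lift is not only well-defined but sends each new source front face onto fe. Once this compatibility is unpacked in each of the projective charts $(\hat w_\pm, \hat \mu_\pm, \cdot)$ and $(\tilde w, \mu, \cdot)$, the remaining verifications are routine and the proposition follows.
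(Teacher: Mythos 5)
There is a genuine gap in your step 2, and it is two-fold.

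First, your invocation of Lemma~\ref{lem:blowdown}(2) for the blow-downs $X_6 \to X_5 \to X_4$ is not justified. That lemma requires the blow-up center to be cut out by the vanishing of $\mathbb R_+$ variables. But the centers $D\cap$bif, $D\cap$bf, and $D\cap$bf$_0$ are cut out by $\{w=0\}$ together with a boundary defining function, and $w$ is the $\mathbb R$-valued diagonal variable, not an $\mathbb R_+$ coordinate (the $w_\pm=\sqrt{\pm w}$ you mention only exist \emph{after} the blow-up). So the hypothesis of Lemma~\ref{lem:blowdown}(2) fails. In fact one can check directly in projective coordinates $(\kappa, W:=w/\kappa^2,\ldots)$ that the b-differential of the blow-down degenerates at the front face: for instance $\beta_*(\partial_W)=\kappa^2\partial_w$, which vanishes at $\kappa=0$. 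Compare this with the paper's own Proposition~\ref{prop:bfibxsharp}, where it is stated explicitly that analogous quadratic-with-respect-to-$D$ blow-ups ``do \emph{not} have blow-down maps which are b-submersions.''

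Second, and more fundamentally, even if the blow-down maps $\beta_5,\beta_6$ were b-submersions, your composition $\pi_4\circ\beta_5\circ\beta_6$ is a map $X_6\to U_1$, \emph{not} the lifted map $\pi_6:X_6\to U$. The lifted map satisfies $\beta_U\circ\pi_6 = \pi_4\circ\beta_5\circ\beta_6$, where $\beta_U:U\to U_1$ is the target blow-down, and surjectivity of $(\beta_U)_*\circ(\pi_6)_*$ does not give you surjectivity of $(\pi_6)_*$ unless $(\beta_U)_*$ is injective, which it is not at fe (for the same reason: the center involves the interior coordinate $w$). This is precisely why the strategy of composing blow-downs works in Proposition~\ref{prop:bfibx4}, where the target stays fixed at $U_1$, but cannot work here, where the target has also been blown up. The paper's proof instead mimics \cite[Lemma 2.5]{hmm}: it verifies the b-submersion property for $\pi_6$ directly by computing the lifted map in the three coordinate patches near $D\cap$bf$_0\cap$tf, $D\cap$bif$\cap$bf$_0$, and $D\cap$bif$\cap$bf, where it turns out to take the forms $(x_1,x_2,x_3)\mapsto(x_1,x_2x_3)$ or $(x_1,x_2,x_3)\mapsto(x_1x_3,x_2)$, which are b-submersions because $(x_1,x_2)\mapsto x_1x_2$ is. Your closing remark about ``compatibility'' gestures at this computation, but it is exactly the content of the proof rather than a routine leftover, and your step 2 as written bypasses it via an argument that does not hold.
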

\begin{proof} This argument is analogous to the proof of \cite[Lemma 2.5]{hmm}. In $U_1$, we blow up the intersection of the diagonal with be to create $U$. The pre-image under $\pi_4:X_4\to U_1$ of that intersection is the union of the three $p$-submanifolds $D\cap$bf$_0$, $D\cap$bif, and $D\cap$bf. So in order to lift $\pi_4$ to a b-fibration onto $U$, we must blow up each of the three elements of that union, in any order. These are precisely the three blow-ups done to create $X_6$.

We cannot use \cite[Lemma 2.5]{hmm} directly, as some of our blow-ups are quasihomogeneous. Instead we mimic the proof, which consists of analysis in local coordinates. Each of the new boundary faces ff, bff, and ff$_0$ have image fe, so it is enough to prove that $\pi_6$ lifts to a b-submersion from $X_6\to U$ Consider coordinate patches on $X_4$ near the diagonal before the blow-up. In the image we use the coordinates $(\mu,w)$ throughout.

Near bf$_0\cap$tf$\cap D$ we can use $(\mu,w,s')$. The projection map $\pi_4$ takes $(\mu,w,s')$ to $(\mu,w)$. We blow up $\{w=\mu=0\}$ in the image to get $U$, quadratically with respect to $\{w=0\}$, and in the pre-image, the sixth blow-up is a blow-up of $\{w=\mu=0\}$, also quadratic with respect to $\{w=0\}$. Thus in this region $\pi_6$ lifts to a product-type projection map, which is a b-submersion.

Near bif$\cap$bf$_0\cap D$ we can use $(\kappa',w,\sqrt s)$. The projection map $\pi_4$ is given by
\[\pi_4(\kappa',w,\sqrt s)=(\kappa'\sqrt s,w).\]
In the preimage we blow up first $\{\kappa'=w=0\}$ quadratically with respect to $w$ and then $\{\sqrt s=w=0\}$ quadratically with respect to $w$. We get three different local projective coordinate systems (where $w\geq 0$; there are others with $w$ replaced by $-w$ when $w<0$, but we omit these as the analysis is identical):
\[(\sqrt w,\frac{\sqrt s}{\sqrt w},\kappa'),\quad (\frac{\sqrt w}{\sqrt s},\frac{\kappa'\sqrt{s}}{\sqrt w},\sqrt s),\quad (\frac{w}{s(\kappa')^2},\kappa',\sqrt s).\]
The former two are mapped by the lift of $\pi_4$ into the region where $(\sqrt w,\frac{\mu}{\sqrt w})$ are good coordinates, and the last is mapped into the region where $(\frac{w}{\mu^2},\mu)$ are good coordinates. Thus $\pi_4$ lifts to the following maps in each of the three coordinate systems:
\[(x_1,x_2,x_3)\to (x_1,x_2x_3);\quad (x_1,x_2,x_3)\to(x_1x_3,x_2);\quad (x_1,x_2,x_3)\to(x_1,x_2x_3).\]
The map $(x_1,x_2)\to x_1x_2$ is a b-submersion from $\mathbb R_+^2$ to $\mathbb R_+$. So each of these three maps extends by continuity to a well-defined b-submersion (note that the only $\mathbb R$ coordinate, the first coordinate in the third map, is left alone). Thus $\pi_4$ lifts to a well-defined b-submersion $\pi_6$ in this region.

Finally, we have to consider the region near bif$\cap$bf$\cap D$, where we use $(\kappa,w,t)$. But the projection map $\pi_4$ is given by $\pi_4(\kappa,w,t)=(\kappa t,w)$ and the analysis is identical, after rearranging the variables, to the analysis near bif$\cap$bf$_0\cap D$. Thus $\pi_4$ lifts to a well-defined b-submersion $\pi_6$ here as well, completing the proof.
\end{proof}

\begin{proposition}\label{prop:bfibxsharp} The map $\pi$ lifts to a b-fibration from $X^{\sharp}\to U^{\sharp}$.
\end{proposition}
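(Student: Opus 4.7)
The plan is to factor $\pi^{\sharp}: X^{\sharp} \to U^{\sharp}$ through the b-fibration $\pi_6: X_6 \to U$ of Proposition \ref{prop:bfibx6}, handling the change of smooth structure and then the three additional blow-ups in order.

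First I would check that $\pi_6$ automatically induces a b-fibration $\pi_6^{\sharp}: X_6^{\sharp} \to U^{\sharp}$. The defining function at obe in $U^{\sharp}$ is $\sqrt{\rho_{obe}}$, and all of its preimage bhses in $X_6^{\sharp}$ (namely obf, obif, and obf$_0$) have their defining functions similarly replaced by square roots. If on the original spaces $\pi_6^*\rho_{obe} = a\prod_i \rho_i^{e_i}$ with $a$ a smooth positive function and $e_i\in\mathbb N_0$, then on the new spaces
\[(\pi_6^{\sharp})^*\sqrt{\rho_{obe}} = \sqrt{a}\prod_i(\sqrt{\rho_i})^{e_i},\]
with the same non-negative integer exponents, so $\pi_6^{\sharp}$ is a b-map. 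Moreover $\sqrt{\rho}\,\partial_{\sqrt{\rho}} = 2\rho\,\partial_{\rho}$, so the module of b-vector fields is unchanged by the switch in smooth structure, and b-submersion together with the bhs-to-bhs condition are inherited from $\pi_6$.

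Next I would add the three blow-ups one at a time, verifying at each stage that the composition with $\pi_6^{\sharp}$ is still a b-fibration. The blow-up of of$\cap$tif, quadratic with respect to of, is the easy case: the submanifold is cut out by the two boundary defining functions $\mu$ and $\tau$, so Lemma \ref{lem:blowdown}(2) gives that the blow-down is a b-submersion, and the new face tof maps into $\pi_6^{\sharp}(\mathrm{of}\cap\mathrm{tif}) \subseteq \mathrm{oe}$, a single bhs of $U^{\sharp}$. The two remaining blow-ups, of obf$\cap S$ and tof$\cap S$ (both cubic with respect to $S$), are more subtle because $S$ is an interior submanifold, locally cut out by $\hat W = 0$ (near obf$\cap$ff) or $\hat\sigma = 0$ (near obf$\cap$tof$\cap$tif), so Lemma \ref{lem:blowdown}(2) does not apply directly. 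For these I would verify b-submersion by direct computation in each projective coordinate patch, mimicking the three-patch analysis near bif$\cap$bf$_0\cap D$ in the proof of Proposition \ref{prop:bfibx6}. The new faces obfx and tofx map into $\pi_6^{\sharp}(\mathrm{obf}) \subseteq \mathrm{obe}$ and $\pi_6^{\sharp}(\mathrm{tof}) \subseteq \mathrm{oe}$ respectively, so the bhs-to-bhs condition again holds.

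The main obstacle will be the projective-coordinate verification for the obfx and tofx blow-ups, since the natural local coordinates there (such as $(\sqrt{\kappa}, \hat W, t)$ near obf$\cap$ff and $(\hat\sigma, \sqrt{\lambda}, \sqrt{\zeta})$ near the triple intersection obf$\cap$tof$\cap$tif) mix an interior coordinate normal to $S$ with boundary defining functions, and the exponent bookkeeping for the quasihomogeneous cubic blow-up needs some care. However, in each new projective coordinate patch, the lifted projection will take the same simple monomial form as the maps $(x_1,x_2,x_3)\mapsto(x_1,x_2x_3)$ and $(x_1,x_2,x_3)\mapsto(x_1x_3,x_2)$ handled in Proposition \ref{prop:bfibx6}, and from this surjectivity of the b-differential and the non-negativity of the exponents can be read off directly, completing the proof.
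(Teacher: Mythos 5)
Your proposal is correct in outline and the first two steps --- showing $\pi$ lifts to a b-fibration $X_6^{\sharp}\to U^{\sharp}$ (same square-root-of-bdf bookkeeping and $\sqrt{\rho}\,\partial_{\sqrt\rho}=2\rho\partial_\rho$ observation as the paper), and handling the blow-up of of$\cap$tif via Lemma~\ref{lem:blowdown}(2) --- match the paper's argument exactly.

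Where you diverge is in the treatment of the two blow-ups creating obfx and tofx. You correctly note that Lemma~\ref{lem:blowdown}(2) does not apply because $S$ is cut out locally by an interior coordinate ($\hat W$ or $\hat\sigma$), and you propose to establish b-submersion of the composed map by explicit projective-coordinate computation, modelled on the three-patch argument in Proposition~\ref{prop:bfibx6}. This will work, but the analogy is imperfect: in Proposition~\ref{prop:bfibx6} the domain blow-ups were matched by a simultaneous blow-up of the codomain (the \cite[Lemma 2.5]{hmm} situation), whereas here the codomain $U^{\sharp}$ is \emph{not} blown up further, so the local monomial forms of the lifted projection do not look quite like $(x_1,x_2,x_3)\mapsto(x_1,x_2x_3)$; rather, in the patches near obfx the new normal-to-$S$ coordinate simply drops out of the leading term of $\pi$. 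The paper instead invokes \cite[Lemma 2.7]{hmm}: the blow-downs are not b-submersions, but the centers obf$\cap S$ and tof$\cap S$ are b-transversal to $\pi$ (essentially because $\pi\vert_S$ is a diffeomorphism onto its image), so the lift remains a b-fibration. The b-transversality route is cleaner and avoids the coordinate bookkeeping; your computational route should reach the same conclusion, provided you take care to verify surjectivity of the b-differential in the patch where the blown-up coordinate is normal to $S$ (there $\pi$ becomes independent of one local coordinate, which is harmless for b-submersion but visually different from the model maps you cite). You should also double-check, rather than assert, that obfx and tofx map \emph{onto} obe and oe within $U_o$ (not merely into), so that the boundary-hypersurface condition for a b-fibration holds.
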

\begin{proof} First we show that it lifts to a b-fibration from $X^{\sharp}_6\to U^{\sharp}$. We know it lifts to a b-fibration from $X_6\to U$, which is certainly a b-map, and therefore that $\pi^{-1}(\rho_{obe})$ is a product of positive integer powers of $\rho_{obif}$, $\rho_{obf}$, and $\rho_{obf_0}$. Thus $\pi^{-1}(\sqrt{\rho_{obe}})$ is a product of positive integer powers of $\sqrt{\rho_{obif}}$, $\sqrt{\rho_{obf}}$, and $\sqrt{\rho_{obf_0}}$. So $\pi$ definitely lifts to a b-map from $X^{\sharp}_6\to U^{\sharp}$. This map is a b-submersion as well, as it is easy to see that, for example,
\[\rho_{obe}\partial_{\rho_{obe}}= \frac 12\sqrt{\rho_{obe}}\partial_{\sqrt{\rho_{obe}}}.\]
There is thus a natural isomorphism between the b-tangent bundles of $X_6$ and $X_6^{\sharp}$, and between the b-tangent bundles of $U$ and $U^{\sharp}$. Since $\pi$ is a b-submersion, its b-differential $\pi^*$ is surjective from $T_bX_6\to T_bU$. Since it respects these natural isomorphisms, $\pi^*$ is also surjective from $T_bX_6^{\sharp}\to T_bU^{\sharp}$, and therefore $\pi$ lifts to a b-submersion from $X^{\sharp}_6\to U^{\sharp}$. Finally, $\pi$ does not map any boundary hypersurface of $X_6^{\sharp}$ into a corner of $U^{\sharp}$, and therefore $\pi$ is a b-fibration as claimed.

Now we must prove that we can lift the domain from $X^{\sharp}_6$ to $X^{\sharp}$ and lift $\pi$ along with it. All blow-down maps are b-maps so $\pi$ certainly lifts to a b-map. To show that the lift is a b-submersion, we first observe that the blow-down map from $[X_6^{\sharp};of\cap tif]\to X_6^{\sharp}$ is a b-submersion, and so its composition with $\pi$ is a b-submersion. Since the image of the new front face tof is oe, it is also a b-fibration. The next two blow-ups, creating tofx and obfx, do \emph{not} have blow-down maps which are b-submersions -- but both blow-ups are b-transversal to $\pi$. Moreover the images of tofx and obfx are oe and obe respectively. Thus, by \cite[Lemma 2.7]{hmm},\footnote{This lemma is stated for regular blow-ups but it holds for quasihomogeneous blow-ups as well, with the same proof.} $\pi$ does indeed lift to a b-fibration from $X^{\sharp}\to U^{\sharp}$, completing the proof.
\end{proof}

We also need:

\begin{proposition}\label{prop:easybmap} The map $(z,\nu,t)\to 2z\sinh t$ lifts to a b-map from $X_4$ to $[0,\infty]$, where the image is taken as a manifold with corners with defining functions $x$ at $x=0$ and $x^{-1}$ at $x=\infty$.
\end{proposition}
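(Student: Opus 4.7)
Since $f(z,\nu,t)=2z\sinh t$ does not depend on the order variable $\nu$, and since $\zeta\in[0,1)$ means $z\geq 1$ on our domain, the only corner of $X$ at which $f$ is indeterminate is $\{\zeta=0\}\cap\{t=0\}$, where $f$ takes the form $\infty\cdot 0$. This indeterminacy is precisely resolved by the cubic blow-up creating af$_0$: writing $\sinh t = t\,\phi(t)$ with $\phi$ smooth and $\phi(0)=1$, in the projective coordinates $(\zeta,p=t/\zeta^3,\mu)$ valid near the tf end of af$_0$ one has
\[
f \;=\; 2\zeta^{-3}\sinh(\zeta^3 p) \;=\; 2p\,\phi(\zeta^3 p),
\]
which is smooth in $(\zeta,p)$; and in the coordinates $(u=\zeta/t^{1/3},\,v=t^{1/3},\,\mu)$ valid near the af end,
\[
1/f \;=\; u^3/(2\,\phi(t)),
\]
also smooth. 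Together these show that $f$ extends continuously to $\tilde f\colon X_4\to[0,\infty]$.

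The rest of the verification is a direct computation in each projective coordinate chart on $X_4$, using the two expansions $\sinh t = t\,\phi(t)$ near $t=0$ and $2\sinh t = \tau^{-6}(1-\tau^{12})$ near $t=\infty$ (with $\tau=e^{-t/6}$), together with the coordinate relations from Section~3. At each boundary hypersurface $\mathcal F$ I would compute either $\tilde f^*(x)=2z\sinh t$ in charts near $\tilde f^{-1}(\{0\})$, or $\tilde f^*(1/x)=\zeta^3/(2\sinh t)$ in charts near $\tilde f^{-1}(\{\infty\})$, and extract the exponents. For example, near the interior of tf one gets $\tilde f^*(x) = (2\zeta^{-3}\phi(t))\cdot t$, giving $e(\text{tf},\{0\})=1$; near of$_0$ in coordinates $(\mu,\,q=t/\mu^3,\,\zeta)$, $\tilde f^*(x)=(2\zeta^{-3}\phi(\mu^3 q))\cdot\mu^3 q$, giving $e(\text{of}_0,\{0\})=3$; near bif in the coordinates $(\mu,\eta,s=t/\mu)$ from Section~3, $\tilde f^*(1/x) = \eta^3\mu^2/(2s\,\phi(s\mu))$, giving $e(\text{bif},\{\infty\})=2$ (with $e(\text{af},\{\infty\})=3$ visible in the same chart). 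Analogous computations at af, tif, bf yield the remaining nonzero exponents $e(\text{af},\{\infty\})=3$, $e(\text{tif},\{\infty\})=6$, $e(\text{bf},\{\infty\})=3$. The front faces af$_0$ and bf$_0$ contribute exponent $0$ to both $\{0\}$ and $\{\infty\}$, because in both their projective charts $f$ factors as a smooth positive function times a bdf of an adjacent face (tf, af, or bf), without any factor of the bdf for af$_0$ or bf$_0$ itself.

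The main obstacle is simply the tedium of the case analysis: there are several projective coordinate charts on $X_4$, including a number of triple corners where two blow-up fronts meet, and one must check consistency of the extracted exponents across overlapping charts. Each individual computation is, however, a routine substitution of one of the two expansions of $\sinh t$ into the coordinate formulas from Section~3, so the argument is essentially mechanical.
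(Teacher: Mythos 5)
Your proposal is correct and takes essentially the same approach as the paper: one factors $2z\sinh t = 2zt\cdot(\sinh t/t)$ near $t=0$ (and uses $\tau=e^{-t/6}$ near $t=\infty$), then reads off the pullback exponents of $x$ and $x^{-1}$ in the projective coordinate charts of $X_4$. In fact your computation at of$_0$ is slightly more precise than the paper's stated tally for $t$, which omits the factor $\rho_{\mathrm{of}_0}^3$; this does not affect the conclusion since $zt\cong\rho_{\mathrm{tf}}\rho_{\mathrm{of}_0}^3$ is still a nonnegative-integer-power monomial, but you are right to record $e(\mathrm{of}_0,\{0\})=3$.
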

\begin{proof} Make a direct computation of the pullbacks of $x$ and $x^{-1}$ under this map. The function $x$ pulls back to $2z\sinh t$, which can be written $2zt(\frac{\sinh t}{t})$. Away from tif, $2\frac{\sinh t}{t}$ is smooth and nonvanishing and can be ignored. As functions on $X_4$,
\[z=\zeta^{-3}\cong \rho_{af}^{-3}\rho_{bf}^{-3}\rho_{bif}^{-3}\rho_{bf_0}^{-3}\rho_{af_0}^{-3};\quad t\cong \rho_{tf}\rho_{bif}\rho_{bf_0}^{3}\rho_{af_0}^3,\]
where $\cong$ denotes equality up to a multiple of a smooth nonvanishing function. Thus $zt\cong \rho_{af}^{-3}\rho_{bf}^{-3}\rho_{bif}^{-2}\rho_{tf}$, which is a smooth nonvanishing multiple of $\rho_{tf}$ in a neighborhood of tf (which intersects bf$_0$, of$_0$, and af$_0$ but not any of the other boundary hypersurfaces -- this is why the fourth blow-up was required as otherwise af would intersect tf). This shows that $x$ pulls back to a smooth nonvanishing function times a product of powers of boundary defining functions, specifically a single power of $\rho_{tf}$.

As for $x^{-1}$, it can be written $2z^{-1}t^{-1}(\frac{\sinh t}{t})^{-1}$. Away from tif, again, we have $z^{-1}t^{-1}\cong \rho_{af}^3\rho_{bf}^3\rho_{bif}^2\rho_{tf}^{-1}$. This is a smooth nonvanishing function times a product of powers of boundary defining functions in a neighborhood of af, bf, and bif (which is disjoint from tf). The same analysis applies. Near tif, the pullback is $z\tau^{6}/(1+\tau^{12})$, which is $\rho_{af}^3\rho_{bf}^3\rho_{tif}^6$, times a smooth nonvanishing function. So the pullback of $x^{-1}$ is a smooth nonvanishing function times a product of powers of defining functions, namely $\rho_{af}^3\rho_{bf}^3\rho_{bif}^2\rho_{tf}^6$. This shows that the map $(z,\nu,t)\to 2z\sinh t$ is a b-map from $X_4$ to $[0,\infty]$.
\end{proof}

A similar argument, but omitting the portion of $X_4$ near tif, shows the following:

\begin{proposition}\label{prop:easybmap2} For any $C$, the map $(z,\nu,t)\to 2\nu t$ lifts to a b-map from $X_4\cap\{t < C\}$ to $[0,\infty]$, where again the image is taken as a manifold with corners with defining functions $x$ at $x=0$ and $x^{-1}$ at $x=\infty$.
\end{proposition}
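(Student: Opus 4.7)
The plan is to follow the template of Proposition~\ref{prop:easybmap}: compute the pullbacks of the two boundary defining functions $x$ (near $x=0$) and $x^{-1}$ (near $x=\infty$) of $[0,\infty]$ in the local projective coordinate systems on $X_4$, and show that each is a smooth nonvanishing multiple of a product of powers of boundary defining functions of the source in an appropriate neighborhood. The hypothesis $t<C$ excludes the face tif, so only the faces of, af, bf, bif, bf$_0$, of$_0$, af$_0$, and tf of $X_4$ require attention. This sidesteps the delicate analysis near $t=\infty$ that required the exponential defining function $\tau=e^{-t/6}$ in Proposition~\ref{prop:easybmap}, so no analog of that discussion is needed here.

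The core computation is a chart-by-chart verification, using the projective coordinates introduced in the construction of $X_2$, $X_3$, and $X_4$, that the pullbacks of $\mu$ and $t$ to $X_4$ are smooth nonvanishing multiples of
\[
\mu \cong \rho_{of}\rho_{bf}\rho_{bif}\rho_{bf_0}\rho_{of_0},\qquad
t \cong \rho_{tf}\rho_{bif}\rho_{bf_0}^3\rho_{of_0}^3\rho_{af_0}^3,
\]
where the cubic exponents at bf$_0$, of$_0$, af$_0$ reflect the quasihomogeneous weights of the third and fourth blow-ups. Combining, $\nu t=\mu^{-3}t$ is a smooth nonvanishing multiple of
\[
\rho_{tf}\,\rho_{af_0}^3\,\rho_{of}^{-3}\rho_{bf}^{-3}\rho_{bif}^{-2},
\]
with the bf$_0$ and of$_0$ exponents canceling to zero (and no dependence on af at all, since neither $\mu$ nor $t$ vanishes there).

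Now $F\colon(z,\nu,t)\mapsto 2\nu t$ vanishes precisely on $\mathrm{tf}\cup\mathrm{af}_0$, blows up precisely on $\mathrm{of}\cup\mathrm{bf}\cup\mathrm{bif}$, and takes finite positive values elsewhere. Crucially, the four blow-ups used to build $X_4$ separate tf and af$_0$ from of, bf, and bif: in $X_4$, tf meets only bf$_0$, of$_0$, af$_0$, while af$_0$ is localized near the old corner af$\cap$tf and never meets of, bf, or bif. Consequently, in any neighborhood of $\mathrm{tf}\cup\mathrm{af}_0$ that avoids of, bf, bif the factors $\rho_{of}^{-3}\rho_{bf}^{-3}\rho_{bif}^{-2}$ are smooth nonvanishing, so $F^{*}x=2\nu t$ is a smooth nonvanishing multiple of $\rho_{tf}\rho_{af_0}^3$; symmetrically, near $\mathrm{of}\cup\mathrm{bf}\cup\mathrm{bif}$ the factors $\rho_{tf}\rho_{af_0}^3$ are smooth nonvanishing, so $F^{*}(x^{-1})=(2\nu t)^{-1}$ is a smooth nonvanishing multiple of $\rho_{of}^3\rho_{bf}^3\rho_{bif}^2$. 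This verifies the b-map condition at every point of $X_4\cap\{t<C\}$.

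The main work, and the one likely source of error, is the bookkeeping in the coordinate computations above: one must carry the quasihomogeneous coordinates on $X_3$ and $X_4$ (such as $\kappa'=\mu^{3/2}/\sqrt{t}$ at bf$_0$ or $(s')^{1/3}$ at af$_0$) through the calculations of $\mu$ and $t$, and check consistency of the claimed exponents across overlapping charts. This is directly parallel to the corresponding computation for $zt$ in the proof of Proposition~\ref{prop:easybmap}: the only substantive change is that the negative exponents shift from $\{\text{af},\text{bf},\text{bif}\}$ to $\{\text{of},\text{bf},\text{bif}\}$ (since $\nu=\mu^{-3}$ rather than $z=\zeta^{-3}$), while the contribution of $t$ is identical in the two calculations.
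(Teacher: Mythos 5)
Your proof is correct and follows essentially the same route as the paper's, which simply asserts that the argument of Proposition~\ref{prop:easybmap} applies with the portion near tif omitted; you carry out that argument explicitly by computing the vanishing and blow-up orders of $\mu$ and $t$ on $X_4$ and combining them. As a small aside, your formula $t\cong \rho_{tf}\rho_{bif}\rho_{bf_0}^3\rho_{of_0}^3\rho_{af_0}^3$ silently corrects a typo in the paper's proof of Proposition~\ref{prop:easybmap}, where the factor $\rho_{of_0}^3$ is missing from the displayed formula for $t$; the omission is harmless there (and here) since it does not change the conclusion that the relevant pullback is a smooth nonvanishing multiple of a monomial in boundary defining functions.
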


Finally, we must prove:

\begin{proposition}\label{prop:triplescat} Let $\pi$ be a projection map from $[0,1)_{\zeta}\times [0,1)_{\mu}\times[0,1)_{\hat u}$ given by omitting one of the variables. Then $\pi$ lifts to a b-fibration from $\hat U$ to $U$. If the omitted variable is $\tilde u$, its restriction to $\hat U^{\flat}$ is also a b-fibration.
\end{proposition}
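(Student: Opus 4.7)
The plan is to adapt the strategy used in the proof of Proposition \ref{prop:bfibx6}. I would lift the projection through the blow-ups step by step, handling each source blow-up as either a preimage blow-up (where the corresponding blow-up is performed on the target, and the b-fibration property is preserved automatically via Lemma \ref{lem:blowdown} and the standard argument) or as a b-transversal blow-up (handled by the analogue of \cite[Lemma 2.7]{hmm}, which extends to quasihomogeneous blow-ups by the remark cited in the proof of Proposition \ref{prop:bfibxsharp}). By the symmetry of the construction of $\hat U$ in the three variables $\zeta,\mu,\tilde u$, it is enough to treat one case in detail; I will take the projection off $\tilde u$, so that $\pi(\zeta,\mu,\tilde u) = (\zeta,\mu)$.

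The base product projection $[0,1)^3 \to [0,1)^2$ is manifestly a b-fibration. In the first stage, I match the target's corner blow-up of $\{\zeta=\mu=0\}$ (producing $Q_1$) with the source blow-ups of the triple point $\{\zeta=\mu=\tilde u=0\}$ (step (ii) of the $\hat U$ construction at the triple intersection) and of the lifted edge $\{\zeta=\mu=0\}$: both centers lie in the preimage of the target center. The remaining two edge blow-ups of step (ii), namely $\{\mu=\tilde u=0\}$ and $\{\zeta=\tilde u=0\}$, are not preimages but are b-transversal to the fibers of $\pi$ (which run along $\tilde u$), and their front faces $\mathcal F_{011}$, $\mathcal F_{101}$ project respectively onto the boundary hypersurfaces $\{\mu=0\}$ and $\{\zeta=0\}$ of $Q_1$.

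In the second stage, the target's parabolic blow-up of $D\cap\text{be}$ (producing fe) corresponds on the source to the parabolic blow-ups of $D_{110}\cap\mathcal F_{110}$ (in step (v)) and $D_{110}\cap\mathcal F_{111}$ (in step (iv)), both of which lie in the preimage of the target center and are quadratic with respect to $D_{110}$. The remaining source blow-ups of $D_{111}$, $D_{101}$, and $D_{011}$ intersections in steps (iii)--(v) have centers that are b-transversal to $\pi$ (each diagonal different from $D_{110}$ is transversal to the $\tilde u$-fibers), and whose images in the target lie in boundary hypersurfaces of the current target (namely be, $\{\mu=0\}$, $\{\zeta=0\}$, or fe once created). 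Applying the preimage principle and the transversality lemma at each blow-up in turn yields the required b-fibration $\hat U \to U$.

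Finally, for the restriction to $\hat U^\flat$, the additional boundary hypersurface is the lift of $D_{101} = \{\zeta=\tilde u\}$, which is a p-submanifold by hypothesis. Since this lift is transversal to the $\tilde u$-fibers of $\pi$, the b-submersion property persists under restriction, and its image under $\pi$ is all of $U$, which is permitted for a b-fibration. The main obstacle throughout is the combinatorial bookkeeping required to verify b-transversality and to identify the image of each new front face, but no individual step presents genuine difficulty beyond the local coordinate analysis already illustrated in the proof of Proposition \ref{prop:bfibx6}.
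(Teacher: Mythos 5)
Your overall plan is sound and matches the paper's: split the source blow-ups into those that cover the preimage of a target blow-up center (handled by the analogue of \cite[Lemma 2.5]{hmm} extended to quasihomogeneous blow-ups) and those whose centers are b-transversal to the projection (handled by \cite[Lemma 2.7]{hmm}), and treat the restriction to $\hat U^{\flat}$ separately. Your treatment of the two $Q_1$-level edge blow-ups $\{\mu=\tilde u=0\}$, $\{\zeta=\tilde u=0\}$ and of the $D_{101}$, $D_{011}$ blow-ups, and of the restriction to $\hat U^{\flat}$, is essentially the paper's.

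However, your classification of $D_{111}\cap\mathcal F_{111}$ is wrong, and this is not a cosmetic difference. The paper groups it with the preimage blow-ups, forming $\hat U_2 := [\hat U_1;D_{111}\cap\mathcal F_{111};D_{110}\cap\mathcal F_{111}; D_{110}\cap\mathcal F_{110}]$ and showing $\pi$ lifts to a b-fibration $\hat U_2\to U$ via the Lemma 2.5 argument, before invoking b-transversality for the remaining four centers. You instead move $D_{111}\cap\mathcal F_{111}$ into the b-transversal group and propose to blow up $D_{110}\cap\mathcal F_{111}$ and $D_{110}\cap\mathcal F_{110}$ first. Two things go wrong. First, $D_{111}\cap\mathcal F_{111}$ projects into $D\cap\partial Q_1$, the very center being blown up to produce $U$, so it lies in the preimage of that center; Lemma 2.7 (b-transversal case) is not the right tool there, since its hypothesis requires the image of the new face to be a full boundary hypersurface of $Y$, and the center's image is instead contained in the target's blow-up center. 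Second, and more concretely, since $D_{111}\subset D_{110}$ we have $D_{111}\cap\mathcal F_{111}\subset D_{110}\cap\mathcal F_{111}$; these centers are nested, so the blow-ups do not commute, and reversing the order from that of the definition of $\hat U$ (steps (iii) before (iv)) does not in general produce $\hat U$. The deepest stratum $D_{111}\cap\mathcal F_{111}$ must be blown up first precisely to put the preimage of $D\cap\partial Q_1$ into the form required to apply the Lemma 2.5 analogue; that is the role of step (iii). You should move $D_{111}\cap\mathcal F_{111}$ into the preimage group, blown up first, and then proceed exactly as you describe for the remaining $D_{101}$ and $D_{011}$ blow-ups.
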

\begin{proof} Without loss of generality, let $\pi$ be the projection map off the third variable $\tilde u$. It is a standard result that $\pi$ lifts to a b-fibration from $\hat U_1\to Q_1$ (see for example \cite{mesi}). We claim now that $\pi$ lifts to a b-fibration from the following space to $U$:
\[\hat U_2:=[\hat U_1;D_{111}\cap\mathcal F_{111};D_{110}\cap\mathcal F_{111}; D_{110}\cap\mathcal F_{110}].\]
Indeed, this would follow from \cite[Lemma 2.5]{hmm} if the blow-ups were not quasihomogeneous. However, mimicking the proof of that Lemma, precisely as in the proof of Proposition \ref{prop:bfibx6}, is enough to prove the claim.

Now we need to lift $\pi$ to a b-fibration from $\hat U$ to $U$. This is a straightforward application of \cite[Lemma 2.7]{hmm}, as all four remaining blow-ups -- $D_{101}\cap\mathcal F_{111}$, $D_{011}\cap\mathcal F_{111}$, $D_{101}\cap\mathcal F_{101}$, and $D_{011}\cap\mathcal F_{011}$ - are of p-submanifolds which are b-transversal to $\pi$. So $\pi$ lifts to a b-fibration from $\hat U$ to $U$. As for its restriction to $\hat U^{\flat}$, first observe that $\hat U^{\flat}$ is in fact a manifold with corners in its own right, and that the restriction is still a b-map and a b-submersion. Finally, the new face comprising the boundary of $\hat U^{\flat}$ is mapped into the interior of $U$, and all other face images are unchanged, so the restriction is a b-fibration, as desired.\end{proof}

\end{document}